\numberwithin{equation}{section}
\newtheorem{thm}{Theorem}[section]
\newtheorem{thmA}{Theorem}
\newtheorem{assumption}[thm]{Assumption}
\newtheorem*{thm*}{Theorem}
\newtheorem{lm}[thm]{Lemma}
\newtheorem{cor}[thm]{Corollary}
\newtheorem*{cor*}{Corollary}
\newtheorem{prop}[thm]{Proposition}
\newtheorem*{conj*}{Conjecture}
\theoremstyle{Remark}
\newtheorem*{remark}{Remark}
\theoremstyle{definition}
\newtheorem*{defn*}{Definition}
\newtheorem{I_Remark*}{Remark}
\newcommand{\nc}{\newcommand}
\newcommand{\beq}{\begin{equation}}
\newcommand{\eeq}{\end{equation}}
\newcommand{\bpmx}{\begin{pmatrix}}
\newcommand{\epmx}{\end{pmatrix}}
\newcommand{\bbmx}{\begin{bmatrix}}
\newcommand{\ebmx}{\end{bmatrix}}
\def\parref#1{\ref{#1}}
\def\thmref#1{Theorem~\parref{#1}}
\def\propref#1{Proposition~\parref{#1}}
\def\corref#1{Corollary~\parref{#1}}
\def\lmref#1{Lemma~\parref{#1}}
\def\makeop#1{\expandafter\def\csname#1\endcsname
  {\mathop{\rm #1}\nolimits}\ignorespaces}
\def\makebb#1{\expandafter\def
  \csname bb#1\endcsname{{\mathbb{#1}}}\ignorespaces}
\def\makebf#1{\expandafter\def\csname bf#1\endcsname{{\bf
      #1}}\ignorespaces}
\def\makegr#1{\expandafter\def
  \csname gr#1\endcsname{{\mathfrak{#1}}}\ignorespaces}
\def\makescr#1{\expandafter\def
  \csname scr#1\endcsname{{\EuScript{#1}}}\ignorespaces}
\def\makecal#1{\expandafter\def\csname cal#1\endcsname{{\mathcal
      #1}}\ignorespaces}
\def\doLetters#1{#1A #1B #1C #1D #1E #1F #1G #1H #1I #1J #1K #1L #1M
                 #1N #1O #1P #1Q #1R #1S #1T #1U #1V #1W #1X #1Y #1Z}
\def\doletters#1{#1a #1b #1c #1d #1e #1f #1g #1h #1i #1j #1k #1l #1m
                 #1n #1o #1p #1q #1r #1s #1t #1u #1v #1w #1x #1y #1z}
    \def\setminus{\smallsetminus}
\def\diag#1{\mathrm{diag}(#1)}
\def\cB{\EuScript B}
\def\cH{{\mathcal H}}
\def\cI{\mathcal I}
\def\cR{{\mathcal R}}
\def\cS{{\mathcal S}}
\def\cV{{\mathcal V}}
\def\cY{\mathcal Y}
\def\sH{\mathscr H}
\def\sB{\mathscr B}
\def\sS{\mathscr S}
\def\sP{\mathscr P}
\def\sW{{\mathscr W}}
\def\ot{\otimes}
\def\hookto{\hookrightarrow}
\def\longto{\longrightarrow}
\def\ol{\overline}  \nc{\opp}{\mathrm{opp}} \nc{\ul}{\underline}
\def\XYmatrix{\xymatrix@M=8pt} 
\def\ncmd{\newcommand}
\ncmd{\xysubset}[1][r]{\ar@<-2.5pt>@{^(-}[#1]\ar@<2.5pt>@{_(-}[#1]}
\ncmd{\XYmatrixc}[1]{\vcenter{\XYmatrix{#1}}}
\ncmd{\xyto}[1][r]{\ar@{->}[#1]}
\ncmd{\xyinj}[1][r]{\ar@{^(->}[#1]}
\ncmd{\xysurj}[1][r]{\ar@{->>}[#1]}
\ncmd{\xyline}[1][r]{\ar@{-}[#1]}
\ncmd{\xydotsto}[1][r]{\ar@{.>}[#1]}
\ncmd{\xydots}[1][r]{\ar@{.}[#1]}
\ncmd{\xyleadsto}[1][r]{\ar@{~>}[#1]}
\ncmd{\xyeq}[1][r]{\ar@{=}[#1]} \ncmd{\xyequal}[1][r]{\ar@{=}[#1]}
\ncmd{\xyequals}[1][r]{\ar@{=}[#1]}
\ncmd{\xymapsto}[1][r]{l\ar@{|->}[#1]}\ncmd{\xyimplies}[1][r]{\ar@{=>}[#1]}
\ncmd{\xyiso}{\ar[r]_-{\sim}}
\def\injxy{\ar@{^(->}}
\newcommand{\pMX}[4]{\begin{pmatrix}
{#1}& {#2}\\
{#3}&{#4}\end{pmatrix} }
\newcommand{\seesaw}[4]{{#1}\ar@{-}[rd]\ar@{-}[d]&{#2}\ar@{-}[d]\\
{#3}\ar@{-}[ru]&{#4}}
\def\x{{\times}}
\def\e{\varepsilon} 
\newcommand\stt[1]{\left\{#1\right\}}
\renewcommand\pmod[1]{\,(\mbox{mod }{#1})}
\renewcommand\Re{\text{Re}\,}
\def\U{{\rm U}}
\def\e{\epsilon}
\def\la{\lambda}
\def\itPi{\mathit{\Pi}}
\def\b{\bar}
\def\ul{\underline}
\def\t{\tilde}
\def\a{\alpha}
\title[Local newforms of unramified $\U_{2n+1}$ and Rankin-Selberg integrals]
{Local newforms for generic representations of unramified ${\rm U}_{2n+1}$ and Rankin-Selberg integrals}
\author{Yao Cheng}
\date{\today}
\address{No. 151, Yingzhuan Road, Tamsui District, New Taipei City 251, Taiwan (R.O.C),  Lui-Hsien Memorial 
Science Hall.}
\email{briancheng@mail.tku.edu.tw}
\begin{document}
\maketitle
\begin{abstract}
Recently Atobe-Oi-Yasuda established the newform theory for irreducible tempered generic representations of 
unramified $\U_{2n+1}$ over non-archimedean local fields. In this paper we extend their result to every irreducible generic
representations and compute the dimensions of the spaces of oldforms. We also compute the Rankin-Selberg integrals 
attached to newforms and oldforms under a natural assumption on the $\gamma$-factors defined by the Rankin-Selberg 
integrals.
\end{abstract}

\section{Introduction}
Newforms have their roots in the classical theory of Atkin-Lehner for modular forms. In that setting, 
newforms are cusp forms which are simultaneously eigenfunctions of all Hecke operators. As a consequence, their Fourier 
coefficients satisfy strong recurrence relations, and their $L$-functions are well behaved. On the other hand, 
oldforms are cusp forms originated from newforms of lower level via certain level raising procedures. Probably the first 
definition and attempt to study newforms in the local $p$-adic setting dates back to Casselman's work
(\cite{Casselman1973}) on $\GL_2$. His result singles out, in every irreducible $generic$ (complex) representation of 
$\GL_2$, a unique (up to scalars) non-zero vector. These vectors are called local newforms as they appeared as local 
components of adelizations of modular newforms. Consequently, local newforms can be used to study automorphic 
forms on $\GL_2$ as well as their applications. They also play indispensable roles for the ramification theory of 
representations of $p$-adic $\GL_2$.\\

Casselman's result was subsequently extended to irreducible $generic$ representations of $p$-adic $\GL_r$
by Jacquet--Piatetski-Shapiro--Shalika (\cite{JPSS1981}, see also \cite{Jacquet2012}, \cite{Matringe2013}).
Their method was based on the Rankin-Selberg integrals constructed in \cite{JPSS1983}.
Then built on the result of Jacquet et al, Reeder (\cite{Reeder1991}) studied oldforms for generic representations of 
$\GL_r$; he showed that oldforms can be produced from a newform via level raising operators and obtained the 
dimensions of the spaces of oldforms. Recently, Humphries gave alternative characterizations of local newofrms of generic
representations of $\GL_r$ in terms of $K$-types (\cite{Humphries2022}); Atobe-Kondo-Yasuda established the theory 
of local newforms for $every$ irreducible representation of $p$-adic $\GL_r$ in \cite{AtobeKondoYasuda}.\\ 

For other $p$-adic classical groups, Roberts-Schmidt developed the theory of local newforms and oldforms for what they
called $paramodular$ representations of ${\rm PGSp}_4$ (\cite{RobertsSchmidt2007}). These include all irreducible 
generic representations and some of non-generic ones. 
For generic representations, they in addition computed the zeta (cf. \cite{Novodvorsky1979}) integrals attached to 
newforms and oldforms (see also \cite{YCheng2022}).  By adopting the arguments in \cite{RobertsSchmidt2007}, 
Miyauchi obtained the theory of local newforms and oldforms for generic representations of unramified ${\rm U}_{2,1}$ 
in the series of papers (\cite{Miyauchi2013a}, \cite{Miyauchi2013b}, \cite{Miyauchi2013c}, \cite{Miyauchi2018}), in which 
he also computed the zeta integrals (cf. \cite{GPS1983}) attached to newforms. 

\subsection{Results of Atobe-Oi-Yasuda}
In a recent preprint \cite{AtobeOiYasuda}, Atobe-Oi-Yasuda extended Miyauchi's result and established the theory of local 
newforms for irreducible $tempered$ generic representations of unramfied ${\rm U}_{2n+1}$ over non-archimedean local 
fields. To state their results, let $F$ be a finite field extension of $\mathbb{Q}_p$ with $p>2$ and $E$ be the unramified 
quadratic field extension of $F$. Denote by $x\mapsto\bar{x}$ the action of the non-trivial element in the Galois group 
${\rm Gal}(E/F)$ on $E$. This action and its notation extend naturally to matrices with entries in $E$. Let $\frak{o}_F$ 
(resp. $\frak{o}_E$) be the valuation ring of $F$ (resp. $E$), $\frak{p}_F$ (resp. $\frak{p}_E$) be its maximal ideal and
$q_F=|\frak{o}_F/\frak{p}_F|$ (resp. $q_E=|\frak{o}_E/\frak{p}_E|=q^2$). Fix an element $\delta\in\frak{o}_E^\x$ with 
$\bar{\delta}=-\delta$ and an additive character $\psi_F$ of $F$ that is trivial on $\frak{o}_F$ but not on $\frak{p}_F^{-1}$. 
Put $\psi_E(x)=\psi_F(\frac{x-\b{x}}{2\delta})$ for $x\in E$. Then $\psi_E$ defines an additive character of $E$ which is 
trivial on $F$ and $\frak{o}_E$ but not on $\frak{p}_E^{-1}$.\\

Let $J_N\in\GL_N(E)$ be the element defined inductively by 
\[
J_1=(1)
\quad\text{and}\quad
J_N
=
\begin{pmatrix}
&J_{N-1}\\
1&
\end{pmatrix}.
\]
Then one has the unitary group ${\rm U}_{2n+1}(F)\subset {\rm GL}_{2n+1}(E)$ defined by 
\[
{\rm U}_{2n+1}(F)=\{g\in{\rm GL}_{2n+1}(E)\mid {}^t\bar{g} J_{2n+1} g=J_{2n+1}\}.
\] 
For an integer $m\ge 0$, let $K_{n,m}\subset {\rm U}_{2n+1}(F)$ be the open compact subgroup given by 
\[
K_{n,m} 
=
\bordermatrix{
              & n     & 1    & n     \cr
    n     & \frak{o}_E &\frak{o}_E &\frak{p}_E^{-m}     \cr
    1     &\frak{p}^m_E& 1+\frak{p}^m_E &\frak{o}_E\cr
    n     &\frak{p}^m_E&\frak{p}^m_E& \frak{o}_E   \cr
            }\cap {\rm U}_{2n+1}(F).
\]
These $K_{n,m}$ are the open compact subgroups considered in \cite{AtobeOiYasuda}. Notice that the matrices used to 
defined the unitary groups in this paper and in \cite{AtobeOiYasuda} are different. However, the associated unitary groups 
are conjugate by a diagonal element in $\GL_{2n+1}(\frak{o}_E)$; hence $K_{n,m}$ remains unchanged for all $m$.\\
 
Let $\pi$ be an irreducible complex representation of ${\rm U}_{2n+1}(F)$. Then by results of Mok (\cite{Mok2015}) and 
Gan-Gross-Prasad (\cite[Theorem 8.1]{GanGrossPrasad2012}), one can attached to $\pi$ an $(2n+1)$-dimensional 
$conjugate$-$orthogonal$ complex representation $\phi_\pi$ of the Weil-Deligne group of $E$. Let 
$\epsilon(s,\phi_\pi,\psi_E)$ denotes the $\epsilon$-factor attached to $\phi_\pi$ and $\psi_E$ (cf. \cite{Tate1979}). 
Then since $\psi_E$ is unramified and trivial on $F$, it can be written as 
\begin{equation}\label{E:epsilon}
\epsilon(s,\phi_\pi,\psi_E)
=
q_E^{-a_\pi(s-1/2)}
\end{equation}
for some integer $a_\pi\ge 0$ (cf. \cite[Proposition 5.2 (2)]{GanGrossPrasad2012}). 
Now their results can be stated as follow.

\begin{thm}[Atobe-Oi-Yasuda]\label{T:main'}
Let $(\pi,\cV_\pi)$ be an irreducible tempered representation of ${\rm U}_{2n+1}(F)$. 
\begin{itemize}
\item[(1)]
If $\pi$ is non-generic, then $\cV_\pi^{K_{n,m}}=0$ for all $m$.
\item[(2)]
If $\pi$ is generic, then
\[
{\rm dim}_{\mathbb{C}}\cV_\pi^{K_{n,m}}
=
\begin{cases}
0\quad\text{if $m<a_\pi$},\\
1\quad\text{if $m=a_\pi$}.
\end{cases}
\]
Moreover, if $m>a_\pi$, then $\cV_\pi^{K_{n,m}}\neq 0$.
\end{itemize}
\end{thm}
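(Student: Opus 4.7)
The plan is to work in the Whittaker model when $\pi$ is generic and bring Rankin-Selberg integrals and their functional equations to bear, in the spirit of Jacquet--Piatetski-Shapiro--Shalika's treatment of $\GL_r$ and Miyauchi's treatment of $\U_{2,1}$. The central identity to exploit is that the local $\epsilon$-factor of $\phi_\pi$ already encodes the conductor exponent $a_\pi$ via \eqref{E:epsilon}, and the same integer must emerge from the functional equation of the local zeta integral attached to any $K_{n,m}$-fixed vector of $\pi$.

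For part (1), I would use that by Mok's endoscopic classification the tempered $L$-packet of $\pi$ contains a unique generic member $\pi^{\mathrm{gen}}$, so $\pi$ and $\pi^{\mathrm{gen}}$ share the same $L$-parameter and hence the same $a_\pi$. One attack is a Jacquet-module or Mackey-type argument showing that the existence of a nonzero $K_{n,m}$-fixed vector in a tempered representation forces nontriviality of a Whittaker functional, so $\pi$ must already be generic; a cleaner alternative is a branching argument along parabolic restriction reducing to a smaller unitary group. For the upper bound in part (2), given $v\in \cV_\pi^{K_{n,m}}$ with Whittaker function $W_v\in\cW(\pi,\psi_E)$, pair against a Whittaker function $W'$ of an irreducible unramified generic representation $\tau$ of $\GL_n(E)$ and a Schwartz-Bruhat function $\Phi=\mathbf{1}_{\mathfrak{o}_E^n}$, forming the Rankin-Selberg zeta integral $Z(s,W_v,W',\Phi)$. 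The $K_{n,m}$-invariance of $W_v$ restricts its support in the Iwasawa decomposition to finitely many cells, each producing a Laurent monomial in $q_E^{-s}$. The functional equation
\[
Z(1-s,\widetilde{W}_v,\widetilde{W}',\widehat{\Phi}) = \gamma(s,\pi \times \tau,\psi_E) \cdot Z(s,W_v,W',\Phi),
\]
combined with the factorization of $\gamma(s,\pi\times\tau,\psi_E)$ separating out $\epsilon(s,\phi_\pi,\psi_E)=q_E^{-a_\pi(s-1/2)}$, forces $W_v\equiv 0$ when $m<a_\pi$ by a degree comparison in $q_E^{-s}$, and pins $W_v$ up to scalar on a single distinguished Iwasawa cell when $m=a_\pi$.

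For existence at $m=a_\pi$, I would construct a candidate newform explicitly by prescribing a Whittaker function supported on this distinguished cell with a chosen value and then averaging over $K_{n,a_\pi}$ to enforce the invariance; non-vanishing of the averaged vector is certified by computing its zeta integral against a suitable test datum and observing that it recovers a nonzero multiple of $L(s,\pi\times\tau)$. Nonvanishing at higher levels $m>a_\pi$ then follows by applying explicit level-raising operators analogous to those of Reeder for $\GL_r$ and Roberts-Schmidt for $\mathrm{PGSp}_4$. The main obstacle is the precise matching of $q_E$-powers between the conductor of $\phi_\pi$ and the level index $m$: this demands an explicit Iwasawa-Bruhat decomposition of $\U_{2n+1}(F)$ adapted to the filtration $\{K_{n,m}\}$ together with careful Haar-measure normalizations, and it is here that fine control of the asymptotics of Whittaker functions on the diagonal torus, beyond what the formal functional equation alone provides, becomes indispensable.
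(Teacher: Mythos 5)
The paper does not prove Theorem~\ref{T:main'}: it is attributed to Atobe--Oi--Yasuda and cited from \cite{AtobeOiYasuda}, and the surrounding text only summarizes their method. That summary makes clear that your proposal takes a genuinely different route, and that the route you pick runs into an obstruction the paper itself flags.

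First, the paper explicitly states that Atobe--Oi--Yasuda's proof ``does not involve Rankin--Selberg integrals.'' For part (2) they instead use the endoscopic character identity of Mok, an analogue of the fundamental lemma for the open compact subgroups $K_{n,m}$, and the transfer of the problem to the $\GL_{2n+1}(E)$ newform theory of Jacquet--Piatetski-Shapiro--Shalika. Your proposal instead tries to mimic the $\GL_r$ and Miyauchi $\U_{2,1}$ strategy by running a Rankin--Selberg zeta integral through its functional equation. The trouble is that this forces you to identify the Rankin--Selberg $\gamma$-factor $\gamma_\delta(s,\pi\times\tau,\psi_F)$ with the arithmetic $\gamma$-factor $\gamma(s,\phi_\pi\otimes\phi_{J(\tau)},\psi_E)$; but for $\U_{2n+1}$ with $n>1$ this identity is precisely Assumption~\ref{H} of the paper, which is treated as an \emph{unproven} hypothesis. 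In other words, your degree-comparison step in the functional equation would silently import Assumption~\ref{H}, whereas Atobe--Oi--Yasuda's argument sidesteps it entirely. (There is also a minor modeling slip: the relevant integral for $\U_{2n+1}\times{\rm Res}_{E/F}\GL_r$ pairs $W_v$ against an Eisenstein section $\xi_s\in I_r(\tau,s)$, as in \eqref{E:RS int}, not against a Schwartz--Bruhat function $\Phi$ as in the $\GL_n\times\GL_n$ Rankin--Selberg integral.)

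Second, for part (1), the ingredient you are missing is the local Gan--Gross--Prasad conjecture for tempered representations of unitary groups (Beuzart-Plessis). The mechanism is: a nonzero vector in $\cV_\pi^{K_{n,m}}$ is in particular $R_{n,m}$-fixed, and $R_{n,m}=\U_{2n}(F)\cap K_{n,m}$ is a hyperspecial maximal compact of $\U_{2n}(F)$ up to conjugacy; pairing the associated matrix coefficient against an unramified tempered representation of $\U_{2n}(F)$ produces a nonzero element of a Bessel-type Hom space, and GGP then forces $\pi$ to be the (unique) generic member of its $L$-packet. A ``Jacquet-module or Mackey-type argument'' has no such teeth here---there is no a priori reason that a $K_{n,m}$-fixed vector in a tempered $\pi$ should yield a Whittaker functional without GGP as input. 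The paper uses exactly this GGP mechanism later, in the proof of Lemma~\ref{L:key lemma}, and you should look there to see how the branching to $\U_{2n}(F)$ and the multiplicity-one theorem of \cite{AGRS2010} are combined.
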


Let us briefly point out the key ingredients in their ingenious proof. To prove \thmref{T:main'} (1), they applied the local 
Gan-Gross-Prasad conjecture (\cite{GanGrossPrasad2012}) established by Beuzart-Plessis
(\cite{Beuzart-Plessis2014}, \cite{Beuzart-Plessis2015}, \cite{Beuzart-Plessis2016}) and the following property of 
$K_{n,m}$. If we put
\[
R_{n,m}=\U_{2n}(F)\cap K_{n,m}
\]
then $R_{n,m}$ is a hyperspecial maximal compact subgroup of $\U_{2n}(F)$, up to conjugate an element in the diagonal 
torus of $\U_{2n}(F)$. Here we identify the unramified $\U_{2n}(F)$ with a subgroup of $\U_{2n+1}(F)$ fixing an 
anisotropic vector. To prove \thmref{T:main'} (2), they applied the endoscopy character relation established by Mok 
(\cite{Mok2015}) as well as the theory of local newforms established by Jacquet et al (\cite{JPSS1981}). 
For this, they proved an analogue of the fundamental lemma for the open compact subgroups $K_{n,m}$.

\subsection{Main results of this paper}
In this paper we further develop the theory of local newforms of unramified $\U_{2n+1}$ initiated by Atobe-Oi-Yausda. 
Our goal is twofold. First, we extend their results from tempered generic representations to every generic representations 
and compute the dimensions of the spaces of oldforms. Second, we compute the Rankin-Selberg integrals 
(cf. \cite{Ben-ArtziSoudry2009}) attached to newforms and oldforms under a natural assumption on the $\gamma$-factors
defined by the Rankin-Selberg integrals. 

\subsubsection{Dimension formula}
Our first result is the following dimension formula for every generic representations.

\begin{thm}\label{T:main}
Let $(\pi,\cV_\pi)$ be an irreducible generic representation of ${\rm U}_{2n+1}(F)$. Then we have 
\footnote{Here we understand that $\begin{pmatrix}a\\b\end{pmatrix}=0$ when $a<b$.}
\begin{equation}\label{E:dim formula}
{\rm dim}_{\mathbb{C}}\cV_\pi^{K_{n,m}}
=
\begin{pmatrix}
\lfloor\frac{m-a_\pi}{2}\rfloor+n\\
n
\end{pmatrix}
\end{equation}
for every integer $m\ge 0$.
\end{thm}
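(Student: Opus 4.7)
The plan is to reduce to the tempered-newform case of Theorem~\ref{T:main'} by realizing a general irreducible generic $\pi$ as a standard module and analyzing $K_{n,m}$-fixed vectors via Mackey theory. Concretely, by the standard module theorem for classical groups (Casselman-Shahidi, extended to unitary groups), an irreducible generic representation $\pi$ of $\U_{2n+1}(F)$ is fully induced from tempered generic data:
\[
\pi \;=\; \Ind_P^G \bigl( \tau_1|\det|^{s_1} \boxtimes \cdots \boxtimes \tau_r|\det|^{s_r} \boxtimes \pi_0 \bigr),
\]
with $s_1 > \cdots > s_r > 0$ real, each $\tau_i$ an irreducible tempered generic representation of $\GL_{n_i}(E)$, and $\pi_0$ an irreducible tempered generic representation of $\U_{2m'+1}(F)$, where $2 \sum_i n_i + 2m' + 1 = 2n+1$. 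Additivity of local $\epsilon$-factors on the conjugate-orthogonal parameter $\phi_\pi = \phi_\tau \oplus \phi_{\pi_0} \oplus \phi_\tau^c$ yields the conductor identity $a_\pi = a_{\pi_0} + 2\sum_i a(\tau_i)$, where $a(\tau_i)$ is the JPSS conductor of $\tau_i$.

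Next, I would compute $\dim \pi^{K_{n,m}}$ by Frobenius reciprocity on the induced representation. Choosing a system of representatives $\{\gamma\}$ for $P \backslash G / K_{n,m}$, indexed by lattice-theoretic data in the building of $\U_{2n+1}$, each double coset yields a contribution
\[
\dim (\tau \boxtimes \pi_0)^{M \cap \gamma K_{n,m} \gamma^{-1}},
\]
where $M = \GL_{n_1}(E) \times \cdots \times \GL_{n_r}(E) \times \U_{2m'+1}(F)$. Each such intersection should decompose as a product of paramodular-level subgroups on the $\GL_{n_i}(E)$-factors and a congruence subgroup of Atobe-Oi-Yasuda type on the $\U_{2m'+1}(F)$-factor. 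For the GL-factors one then plugs in the JPSS-Reeder oldform formula $\binom{\ell - a(\tau_i) + n_i - 1}{n_i - 1}$, and for the $\U_{2m'+1}(F)$-factor one extends Theorem~\ref{T:main'} from the newform level $a_{\pi_0}$ to higher levels by induction on the rank, running the same Mackey machinery recursively. The resulting sum of binomial products over double cosets should collapse via iterated Chu-Vandermonde convolution to the single binomial $\binom{\lfloor (m-a_\pi)/2 \rfloor + n}{n}$.

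The main obstacle is the explicit lattice-theoretic analysis of the double coset space $P \backslash G / K_{n,m}$: one must find representatives and verify that each stabilizer $M \cap \gamma K_{n,m} \gamma^{-1}$ factors as the prescribed product of level subgroups on $M$. A further subtlety is reconciling the factor of $2$ in the conductor decomposition $a_\pi = a_{\pi_0} + 2\sum_i a(\tau_i)$ with the floor function $\lfloor (m-a_\pi)/2 \rfloor$ in the final dimension formula; this should stem from the difference between the $\frakp_E$-valuations controlling level on the GL-side and the $\frakp_F$-valuations controlling level on the unitary side. Carrying out the binomial-sum identity cleanly after the double-coset bookkeeping is where I expect the real work to lie, and it is also precisely the place where a clean assumption on Rankin-Selberg $\gamma$-factors (as hinted at in the introduction) would most naturally short-circuit the combinatorics via an independent upper bound on $\dim \cV_\pi^{K_{n,m}}$.
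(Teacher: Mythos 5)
Your non-tempered reduction matches the paper's second step closely: realize $\pi$ as a fully induced standard module, compute $P\backslash G/K^0_{n,m}$ double cosets (representatives $\chi_{-\e_j}(\varpi^{e+d})$ with $0\le d\le\ell$), verify that the Levi stabilizer factors as a $\GL$-congruence piece $\hat\Gamma_{r,\ell-d}$ times an Atobe--Oi--Yasuda-type subgroup $K^0_{n-r,e+2d}$ on the lower-rank unitary group (this is precisely \corref{C:inter Levi}), plug in the Jacquet--Piatetski-Shapiro--Shalika/Reeder formula on the GL side, and collapse the resulting sum by a Chu--Vandermonde-type identity. Your intuition about the factor of $2$ being an $E$-versus-$F$ level normalization phenomenon is also correct.

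The genuine gap is in the tempered base case. You write that one ``extends Theorem~\ref{T:main'} from the newform level $a_{\pi_0}$ to higher levels by induction on the rank, running the same Mackey machinery recursively,'' but this cannot terminate: tempered generic representations of $\U_{2m'+1}(F)$ include discrete series, which admit no further parabolic reduction, and even for non-discrete tempered $\pi_0$ the relevant unitary induction from discrete series data may be reducible, so Frobenius reciprocity computes the fixed vectors of the whole induced representation rather than of $\pi_0$ alone. Theorem~\ref{T:main'} only gives the dimension at levels $m\le a_\pi$ plus non-vanishing for $m>a_\pi$; it does not give the count of oldforms, and your recursion offers no way to obtain it. The paper instead handles the tempered case by a completely different mechanism: it uses the equality $\dim_{\mathbb C}\cV_\pi^{K_{n,m}}={\rm tr}(I;\cV_{\itPi}^{\tilde K_{2n+1,m}})$ established (via endoscopic character identities and an analogue of the fundamental lemma) in the proofs of \cite[Theorems 4.3, 4.4]{AtobeOiYasuda}, where $\itPi$ is the conjugate self-dual generic representation of $\GL_{2n+1}(E)$ attached to $\phi_\pi$ and $I$ is the normalized involution $\itPi\xrightarrow{\sim}\itPi^\theta$. \lmref{L:trace} then computes this trace directly on Reeder's explicit basis of oldforms for $\GL_{2n+1}$, using the key commutation identity $I_{m+1}(f\star v)=f^\iota\star I_m(v)$ and $f_i^\iota=q_E^{n-i}f_{2n-i}$, which reduces the trace to a count of palindromic compositions and yields the binomial coefficient. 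Without some surrogate for this $\GL_{2n+1}$-side trace computation, your proposal has no way to count oldforms for a tempered $\pi$. Finally, your closing remark about short-circuiting the combinatorics by a $\gamma$-factor upper bound is not how the paper proceeds: the dimension formula \eqref{E:dim formula} is established unconditionally, with no appeal to Assumption~\ref{H}, which enters only in the Rankin--Selberg computation of \thmref{T:main2}.
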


Let us make a few remarks:

\begin{itemize}
\item[(1)]
Following the usual convention, non-zero elements in the line $\cV_\pi^{K_{n,a_\pi}}$ are called $newforms$ of $\pi$;
elements in $\cV_\pi^{K_{n,m}}$ for $m>a_\pi$ are called $oldforms$ of $\pi$.
\item[(2)]
The proof of \thmref{T:main} consists of two steps. The first step is to obtain \eqref{E:dim formula} when $\pi$ is tempered,
which follow from the results of Reeder (\cite[Theorem 1]{Reeder1991}) and Atobe-Oi-Yasada 
(\cite[Theorem 1.2]{AtobeOiYasuda}). The next step is to extend \eqref{E:dim formula} to non-tempered $\pi$;
such $\pi$ is isomorphic to a full induced representation of $\U_{2n+1}(F)$ by the standard 
module conjecture (cf. \cite{CasselmanShahidi1998}, \cite{Muic2001}). Therefore, our task is to compute the double coset
decompositions of $\U_{2n+1}(F)$ by maximal parabolic subgroups and $K_{n,m}$. Similar computations already 
appeared in the works of Roberts-Schmidt (\cite[Chapter 5]{RobertsSchmidt2007}) and Miyauchi (\cite{Miyauchi2013c});
our approach mimics theirs.
\item[(3)]
Equation \eqref{E:dim formula} is compatible with results of Miyauchi in \cite{Miyauchi2013a} and
\cite{Miyauchi2013c}. Consequently, by combining \thmref{T:main} with the results of Miyauchi in 
\cite[Theorem 4.3]{Miyauchi2013b} and \cite{Miyauchi2018}, one sees immediately that the $\epsilon$-factors defined 
by the Rankin-Selberg integrals and the associated Weil-Deligne representation agree.
\item[(4)]
Finally, we point out that similar reduction procedures can be used to reduce the newform 
conjecture for $p$-adic $\SO_{2n+1}$ proposed by Gross (\cite{Gross2015}) and Tsai (\cite{Tsai2013}, \cite{Tsai2016})
from generic representations to tempered generic representations.
\end{itemize}

\subsubsection{Rankin-Selberg integrals}
As mentioned in \cite{AtobeOiYasuda}, their proof does not involve Rankin-Selberg integrals; hence the relation 
between Rankin-Selberg integrals and newforms remains unclear. On the other hand, from results in the literature, 
newforms usually serve as "test vectors" for certain Rankin-Selberg integrals, in the sense that if one computes the 
Rankin-Selberg integral attached to a newform, then one obtains the $L$-factor attached to the generic representation
(cf. \cite{JPSS1981}, \cite{RobertsSchmidt2007}, \cite{Miyauchi2018}, \cite{YCheng2022}). In view of these, another goal 
of this paper is to investigate this relation (under a natural assumption).
The Rankin-Selberg integrals considered here are those attached to generic representations of 
$
\U_{2n+1}\x{\rm Res}_{E/F}\GL_r
$ 
where ${\rm Res}_{E/F}$ stands for the Weil 
restriction from $E$ to $F$. These local integrals are indeed coming from their global counterparts, which were first 
studied by Tamir in \cite{Tamir1991} when $r=n$, following the idea of Gelbart and Piatetski-Shapiro 
(\cite[Part B]{GPSR1987}).  His results were later extended to the case $r<n$ by Ben-Artzi and Soudry in 
\cite{Ben-ArtziSoudry2009}, and to the case $r>n$ by Ginzburg-Rallis-Soudry in \cite{GRS2011}, whose local integrals
were further studied by Morimoto-Soudry in \cite{MorimotoSoudry2020}.\\

To describe our second result, let $(\tau,\cV_\pi)$ be an unramified representation of ${\rm Res}_{E/F}\GL_r(F)=\GL_r(E)$
that is an induced representation of $Langlands$ $type$. This includes all (classes of) irreducible unramified generic 
representations of $\GL_r(E)$, but also contains reducible ones. Moreover, it has the following features: 
(i) the space $\cV_\tau^{\GL_r(\frak{o}_E)}$ is one-dimensional; 
(ii) $\tau$ admits a unique non-zero Whittaker functional $\Lambda_{\tau,\b{\psi}_E}$ which is non-trivial on 
$\cV_\tau^{\GL_r(\frak{o}_E)}$; 
(iii) the unique irreducible quotient $J(\tau)$ of $\tau$ is unramified.
Because of (i) and (ii), we can fix a basis $v_\tau$ of $\cV_\tau^{\GL_r(\frak{o}_E)}$ with
$\Lambda_{\tau,\b{\psi}_E}(v_\tau)=1$. Now let $s$ be a complex number. Then by inflating the representation 
$\tau\ot|\det|_E^{s-1/2}$ to a parabolic subgroup of unramified $\U_{2r}(F)$ of Siegel type, we get a normalized induced 
representation $\rho_{\tau,s}$ of $\U_{2r}(F)$, whose underlying space $I_r(\tau,s)$ consisting of smooth 
$\cV_\tau$-valued functions on $\U_{2r}(F)$ satisfying the usual rule. The Rankin-Selberg integral $\Psi_{n,r}(v\ot\xi_s)$ is 
attached to $v\in\cV_\pi$ and $\xi_s\in I_r(\tau,s)$. By applying the normalized intertwining operator 
on $I_r(\tau,s)$ together with the multiplicity one result of certain Hom space, one gets the $\gamma$-factor 
$\gamma_\delta(s,\pi\x\tau,\psi_F)$ depending on the choice of $\delta$ and $\psi_F$ defined by these local integrals. 
In this paper, we assume the following.

\begin{assumption}\label{H}
Suppose that $1\le r\le n$. Then 
\[
\gamma_\delta(s,\pi\x\tau,\psi_F)=\gamma(s,\phi_\pi\ot\phi_{J(\tau)},\psi_E)
\] 
where $\phi_{J(\tau)}$ is the $L$-parameter of $J(\tau)$ under the local Langlands correspondence of $\GL_r(E)$.
\end{assumption}

Assume $1\le r\le n$ from now on. Then $\U_{2r}(F)$ can be regarded as a subgroup of $\U_{2n+1}(F)$ via a 
natural embedding. For each $m\ge 0$, let us put $R_{r,m}=\U_{2r}(F)\cap K_{n,m}$. 
Then $R_{r,m}$ satisfies the similar property as $R_{n,m}$ mentioned in the previous subsection. In particular, the space 
$I_r(\tau,s)^{R_{r,m}}$ is one-dimensional and admits a generator $\xi_{\tau,s}^m$ with $\xi^m_{\tau,s}(I_{2r})=v_\tau$. 
Now our second result can be stated as follow.

\begin{thm}\label{T:main2}
Let $(\pi,\cV_\pi)$ be an irreducible generic representation of $\U_{2n+1}(F)$ and fix a non-zero Whittaker functional 
$\Lambda_{\pi,\psi_E}$ on $\cV_\pi$ as well as a basis $v_\pi$ of $\cV_\pi^{K_{n,a_\pi}}$. Then under the Assumption
\ref{H}, 
\begin{equation}\label{E:main eqn}
\Psi_{n,r}(v_\pi\ot\xi_{\tau,s}^{a_\pi})
=
\frac{L(s,\phi_\pi\ot\phi_{J(\tau)})}{L(2s, \phi_{J(\tau)}, {\rm As})}
\cdot
\Lambda_{\pi,\psi_E}(v_\pi)
\end{equation}
provided that the Haar measures are suitably chosen (cf. \S\ref{SS:setup}). Here As stands for the Asai representation
\footnote{We slightly abuse the notation to regard $\phi_{J(\tau)}$ as the $L$-parameter of the representation $J(\tau)$ of 
the $F$-group ${\rm Res}_{E/F}\GL_r(F)$.} (cf. \cite[p.20-p.21]{GRS2011}). Moreover, if $\pi$ is 
tempered, then $\Lambda_{\pi,\psi_E}(v_\pi)$ is non-zero.
\end{thm}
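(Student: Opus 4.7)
The plan is to establish \eqref{E:main eqn} first for tempered $\pi$ by combining the functional equation of $\Psi_{n,r}$ with a Gindikin-Karpelevi\v{c}-type computation of the standard intertwining operator on $\xi^{a_\pi}_{\tau,s}$, and then to extend the identity to all irreducible generic $\pi$ via the standard module conjecture together with the multiplicativity of $L$- and $\gamma$-factors.

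For the reduction to the tempered case, the standard module conjecture \cite{CasselmanShahidi1998,Muic2001} yields that every irreducible generic $\pi$ is isomorphic to a full parabolically induced representation $\mathrm{Ind}_P^{\U_{2n+1}(F)}(\rho\otimes\pi_0)$, where $\rho$ is an essentially square-integrable representation of a general linear factor and $\pi_0$ is a tempered generic representation of a smaller odd unramified unitary group. The double-coset analysis carried out in the proof of \thmref{T:main} identifies $a_\pi$ with the sum of the corresponding exponents for $\rho$ and $\pi_0$, and identifies $v_\pi$ (up to a non-zero scalar) with a specific section of the induced model built from $v_{\pi_0}$. Combined with the standard multiplicative behavior of $\Psi_{n,r}$ along parabolic induction and the matching multiplicativity of $L(s,\phi_\pi\otimes\phi_{J(\tau)})$ and $L(2s,\phi_{J(\tau)},\mathrm{As})$, this reduces \eqref{E:main eqn} for $\pi$ to the analogous statement for $\pi_0$.

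In the tempered case, the goal is to show that both sides of \eqref{E:main eqn} are rational in $q_E^{-s}$ and satisfy the same functional equation under $s\mapsto 1-s$. The functional equation of $\Psi_{n,r}$, combined with Assumption \ref{H}, relates $\Psi_{n,r}(v_\pi\otimes\xi^{a_\pi}_{\tau,s})$ and $\Psi_{n,r}(v_\pi\otimes M(\tau,s)\xi^{a_\pi}_{\tau,s})$ through $\gamma(s,\phi_\pi\otimes\phi_{J(\tau)},\psi_E)$, where $M(\tau,s)$ is the standard intertwining operator sending $I_r(\tau,s)$ to $I_r(\tau^*,1-s)$. Because $R_{r,a_\pi}$ is a conjugate of a hyperspecial maximal compact subgroup of $\U_{2r}(F)$, the classical Gindikin-Karpelevi\v{c} formula applies and gives
\[
M(\tau,s)\xi^{a_\pi}_{\tau,s} = \frac{L(2s-1,\phi_{J(\tau)},\mathrm{As})}{L(2s,\phi_{J(\tau)},\mathrm{As})}\cdot \xi^{a_\pi}_{\tau^*,1-s},
\]
up to the Haar-measure normalization of \S\ref{SS:setup}. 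Together with the $L$-factor decomposition of $\gamma(s,\phi_\pi\otimes\phi_{J(\tau)},\psi_E)$ (whose $\epsilon$-factor is trivial in $s$ by the unramifiedness of $\psi_E$ up to a power of $q_E^{-s}$ absorbed by $a_\pi$), these inputs force the ratio $\Psi_{n,r}(v_\pi\otimes\xi^{a_\pi}_{\tau,s})\cdot L(2s,\phi_{J(\tau)},\mathrm{As})/L(s,\phi_\pi\otimes\phi_{J(\tau)})$ to be symmetric under $s\mapsto 1-s$ and, by temperedness-type bounds on $W_{v_\pi}$ that rule out poles and zeros in the critical strip, to be constant in $s$.

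The main obstacle is evaluating this constant, which requires exhibiting a situation in which $\Psi_{n,r}(v_\pi\otimes\xi^{a_\pi}_{\tau,s})$ can be computed directly. Taking $\tau$ unramified and $s$ in the region of absolute convergence, a careful unfolding of $\Psi_{n,r}$ using the Iwasawa decomposition together with the $R_{r,a_\pi}$-invariance of $\xi^{a_\pi}_{\tau,s}$ and the $K_{n,a_\pi}$-invariance of $v_\pi$ reduces the computation to the value of the Whittaker function $W_{v_\pi}$ at the identity, which by definition equals $\Lambda_{\pi,\psi_E}(v_\pi)$. The non-vanishing assertion $\Lambda_{\pi,\psi_E}(v_\pi)\neq 0$ for tempered $\pi$ then follows a posteriori: at the chosen $\tau$ and $s$ the $L$-factor ratio is finite and non-zero, while the unfolding shows the left-hand side of \eqref{E:main eqn} cannot vanish identically on a non-zero newform, so $\Lambda_{\pi,\psi_E}(v_\pi)\neq 0$.
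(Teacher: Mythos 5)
Your proposal departs from the paper's argument in several places, and at least two of the departures are genuine gaps rather than alternative routes.

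First, the reduction to the tempered case is not carried out in the paper and would require substantial extra input. You invoke ``the standard multiplicative behavior of $\Psi_{n,r}$ along parabolic induction'' and an identification of $v_\pi$ with a ``specific section of the induced model built from $v_{\pi_0}$,'' but neither of these is established here. Multiplicativity of the $\U_{2n+1}\times\GL_r$ Rankin--Selberg integrals is a theorem that must be proven separately (it is not even needed for the paper's argument), and the double-coset computations of \S\ref{S:double coset decomp}--\S\ref{S:proof of main} give a \emph{dimension count}, not an explicit matching of $v_\pi$ with an induced section $v_{\pi_0}$. In fact the paper avoids this reduction entirely: Proposition~\parref{P:main prop} and the constancy argument in \S\ref{SS:pre2} are carried out uniformly for all irreducible generic $\pi$, tempered or not.

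Second, the constancy step is where the real content lies, and your justification does not reach it. You assert that rationality plus the $s\mapsto 1-s$ functional equation plus ``temperedness-type bounds on $W_{v_\pi}$ that rule out poles and zeros'' force the normalized integral to be constant in $s$; but rationality and a palindromic functional equation do not imply constancy, and no temperedness bound rules out nonconstant palindromic rational functions. The paper's mechanism is different and essentially combinatorial: after packaging the integral into a symmetric Laurent polynomial $\Xi^{a_\pi}_{n,n}(v_\pi;X_1,\ldots,X_n)\in\cS_n$, the support condition on $W_{v_\pi}$ restricted to $T_n$ (coming from right $K_{n,a_\pi}$-invariance) forces the degree in $Y_n=X_1\cdots X_n$ to be $\ge 0$ (equation~\eqref{E:degree in Y_n}). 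Since at level $m=a_\pi$ the functional equation in Proposition~\parref{P:main prop}(2) has no $(X_1\cdots X_n)$ twist, $\Xi^{a_\pi}_{n,n}(v_\pi;\cdot)$ is invariant under $X_i\mapsto X_i^{-1}$, and a symmetric Laurent polynomial with non-negative $Y_n$-degree and this invariance must be constant. Without identifying this positivity input, your argument does not establish constancy.

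Third, your non-vanishing argument for $\Lambda_{\pi,\psi_E}(v_\pi)$ is circular. The unfolding gives $\Psi_{n,n}(v_\pi\otimes\xi^{a_\pi}_{\tau,s})=\frac{L(s,\phi_\pi\otimes\phi_{J(\tau)})}{L(2s,\phi_{J(\tau)},\mathrm{As})}\Lambda_{\pi,\psi_E}(v_\pi)$, so if $\Lambda_{\pi,\psi_E}(v_\pi)$ were zero the integral would simply vanish identically; nothing in the unfolding rules this out. What is needed is an independent proof that $W_{v_\pi}$ is not identically zero on $T_n$ when $\pi$ is tempered, and the paper supplies this in Lemma~\parref{L:key lemma} by a Gan--Gross--Prasad/matrix-coefficient argument: a nonzero bi-$R_{n,m}$-invariant pairing of matrix coefficients of $\pi$ against an unramified tempered $\rho_{\tau,1/2}$ exists, multiplicity one identifies this with the Rankin--Selberg integral up to a nonzero constant, and Lemma~\parref{L:vanish} then forces $W_{v_\pi}\not\equiv 0$ on $T_n$. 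This ingredient is absent from your proposal.

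A minor point: your Gindikin--Karpelevi\v{c} formula for $M(\tau,s)\xi^{a_\pi}_{\tau,s}$ omits the factor $\omega_{\tau_s}(\varpi)^{a_\pi}$ that appears in Lemma~\parref{L:GK method for m} because $R_{r,a_\pi}$ is a hyperspecial subgroup only after conjugating by a torus element; this factor is precisely what combines with $\epsilon(s,\phi_\pi\otimes\phi_{J(\tau)},\psi_E)$ to produce the clean $(X_1\cdots X_r)^{a_\pi-m}$ twist in the functional equation.
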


Again, let us make a few remarks:
\begin{itemize}
\item[(1)]
Here we only compute the Rankin-Selberg integrals when $\tau$ is unramified because if $\tau$ is irreducible and 
generic, but not unramified, then one can show that (without the Assumption \ref{H})
$\Psi_{n,r}(v\ot\xi_s)=0$ for every $v\in\cV_\pi^{K_{n,m}}$ and $\xi_s\in I_r(\tau,s)$ (cf. \cite[Lemma 4.3]{YCheng2022}).
\item[(2)]
The proof of \eqref{E:main eqn} is resemblance to that of \cite[Equation (1.7)]{YCheng2022}, due to the similar nature and 
technical closeness of the constructions. A more interesting but difficult question is to show that 
$\Lambda_{\pi,\psi_E}(v_\pi)$ is non-zero. For this, one way is to prove that the associated Whittaker functions of 
newforms or non-zero oldforms are non-trivial on the diagonal tours. In the literature, this is usually done by using the 
$P_{n+1}$-theory, where $P_{n+1}$ is the mirobolic subgroup of $\GL_{n+1}$ 
(cf. \cite{JPSS1981}, \cite{RobertsSchmidt2007}, \cite{Miyauchi2013a}, \cite{Tsai2013}). 
Here we take a different approach which utilizes the Gan-Gross-Prasad conjecture as mentioned before. In fact, this is 
inspired by the proof of \cite[Theorem 1.1 (1)]{AtobeOiYasuda} which is also the reason for the temperedness assumption.
However, the advantage of our proof lies in its simplicity.
\item[(3)]
In this paper, we also compute the Rankin-Selberg integrals attached to oldforms under the Assumption \ref{H}. 
Notice that the formula \eqref{E:dim formula} is obtained via an abstract method; hence is not suitable for the 
computations. In order to compute these integrals, we construct conjectural bases of oldforms via level raising 
operators coming from elements in spherical Hecke algebras of $\U_{2n}(F)$ in the same spirit of 
\cite{Reeder1991} and \cite{YCheng2022}. It shall be pointed out that we need explicitly constructed oldforms to compute 
the attached Rankin-Selberg integrals, but at the same times, we also need to compute the attached Rankin-Selberg 
integrals to make sure that our constructions give non-zero oldforms. When $\pi$ is tempered, we verify that these
indeed define bases of oldforms (under the Assumption \ref{H}).
\item[(4)]
The Assumption \ref{H} should be verified through standard approaches. Namely, the Rankin-Selberg $\gamma$-factors 
should be multiplicative. 
This allows one to connect the Rankin-Selberg $\gamma$-factors to the ones defined by the Langlands-Shahidi 
method (cf. \cite{Shahidi1981}, \cite{Shahidi1990}) Next, by combining results and arguments in \cite{CPSS2011}, 
\cite{CKPSS2004} and \cite{MoeglinTadic2002}, it should be able to further relate Shahidi's 
$\gamma$-factors with those defined by the associated Weil-Deligen representations. In general, these two 
$\gamma$-factors might be equal up to an exponential, but we expect that they are actually equal in our settings.
\end{itemize}

\subsection{Organization of the paper}
This paper is organized as follows. In \S\ref{S:paramodular subgroup}, we investigate some properties of the open 
compact subgroups $K_{n,m}$ which will be used later. In \S\ref{S:double coset decomp}, we obtain double coset 
decompositions of $\U_{2n+1}(F)$ by maximal parabolic subgroups and $K_{n,m}$. 
Then we prove \thmref{T:main} in \S\ref{S:proof of main}. In \S\ref{S:basis}, we introduce conjecture bases for the spaces
of oldforms. The Rankin-Selberg integrals are reviewed in \S\ref{S:RS int}. We then compute these integrals attached to
newforms and oldforms in \S\ref{S:proof of main2}. As a result, \thmref{T:main2} is proved. Finally, in the Appendix, 
we prove a multiplicity one result for certain Hom spaces, which allows one to define Rankin-Selberg $\gamma$-factors 
attached to reducible representations.

\subsection{Notation and conventions}
Let $F$ be a finite field extension of 
$\mathbb{Q}_p$ with $p>2$ and $E$ be the unramified quadratic field extension of $F$. Denote by $x\mapsto\bar{x}$ 
the action of the non-trivial element in the Galois group ${\rm Gal}(E/F)$ on $E$. This action and its notation extend 
naturally to matrices with entries in $E$. Let 
\[
E^1=\stt{x\in E\mid x\bar{x}=1}
\]
be the group of norm one elements in $E$. Let $\frak{o}_F$ (resp. $\frak{o}_E$) be the valuation ring of $F$ (resp. $E$) 
and $\frak{p}_F$ (resp. $\frak{p}_E$) be its maximal ideal. Fix a generator $\varpi$ of $\frak{p}_F$ which is also a 
generator of $\frak{p}_E$. Denote by $k_F=\frak{o}_F/\frak{p}_F$ (resp. $k_E=\frak{o}_E/\frak{p}_E$) the residue field of 
$F$ (resp. $E$) and $q_F=q$ (resp. $q_E$) the cardinality of $k_F$ (resp. $k_E$). Then we have $q_E=q^2$.
Let $|\cdot|_E$ be the absolute value on $E$ with $|\varpi|_E=q_E^{-1}$. 
Fix an element $\delta\in\frak{o}_E^\x$ with 
$\bar{\delta}=-\delta$. Fix an unramified additive character $\psi_F$ of $F$, namely, $\psi_F$ is an additive character of 
$F$ that is trivial on $\frak{o}_F$ but not on $\frak{p}_F^{-1}$.  Put $\psi_E(x)=\psi_F(\frac{x-\b{x}}{2\delta})$ for $x\in E$. 
Then $\psi_E$ defines an unramified additive character of $E$ which is also trivial on $F$. 
Let $J_N\in\GL_N(E)$ be the element defined inductively by 
\[
J_1=(1)
\quad\text{and}\quad
J_N
=
\begin{pmatrix}
&J_{N-1}\\
1&
\end{pmatrix}
\]
and $I_N$ be the identity matrix in $\GL_N(E)$. If $a\in\GL_N(E)$, we denote $a^*=J_N{}^t\b{a}^{-1}J_N^{-1}$.\\

Suppose that $G$ is an $\ell$-group in the sense of \cite[Section 1]{BZ1976}. We denote by $\delta_G$ the modular 
function of $G$. If $K\subset G$ is an open compact subgroup, then $\sH(G//K)$ denotes the convolution algebra 
which consists of smooth compact supported bi-$K$-invariant $\bbC$-valued functions on $G$. 
In this work, by a representation of $G$ we mean a smooth complex representation with finite length. If $\pi$ is a 
representation of $G$, then its underlying (abstract) space is usually denoted by $\cV_\pi$. Finally, if $S_0$ is a subset
of a set $S$, we denote by $\mathbb{I}_{S_0}$ the characteristic function of $S_0$.

\section{Unitary groups and their open compact subgroups}\label{S:paramodular subgroup}
In this section, we set up the unitary groups and their parametrization. We then introduce the open compact 
subgroups used to define newforms and investigate some of their properties. The goal is to obtain decompositions of 
these open compact subgroups (cf. \lmref{L:decomp K}), which are important in the sequel.

\subsection{Unitary groups}
Let $\mathbb{V}_N$ be an $N$-dimensional vector space over $E$ equipped with a non-degenerated Hermitian form
\[
\langle\cdot,\cdot\rangle: \mathbb{V}_N\x\mathbb{V}_N\to E.
\]
which is $E$-linear in the first variable and satisfying $\ol{\langle v, w\rangle}=\langle w, v\rangle$ for all 
$v,w\in\mathbb{V}_N$. We assume that $\mathbb{V}_N$  admits an order basis 
\[
e_1,\ldots, e_n, v_0, f_n,\ldots, f_1
\]
whose associated Gram matrix is $J_N$. Here $n=\lfloor\frac{N}{2}\rfloor$ and we do not have $v_0$ when $N$ is even. 
Let $\U(V_N)$ be the unitary group of $V_N$, i.e. the connective reductive linear algebra group over $F$ defined by
\[
\U(\mathbb{V}_N)
=
\stt{g\in\GL(\mathbb{V}_N)\mid\text{$\langle gv, gw\rangle=\langle v,w\rangle$ for all $v,w\in V_N$}}.
\]
Notice that $\U(\mathbb{V}_N)$ is unramified over $F$ by our assumption on the ordered basis. This ordered basis also 
allows us to realize $\U(\mathbb{V}_N)$ as a subgroup $\U_N(F)$ of $\GL_N(E)$ given by
\[
\U_N(F)=\stt{g\in\GL_N(E)\mid {}^t\b{g}J_Ng=J_N}.
\]
In particular, we have $\U_1(F)=E^1$ and it can be identified as the center of $\U_N(F)$ through the embedding 
\[
x\mapsto xI_N.
\] 
To distinguish the parity of $N$, we also denote $G_n=\U_{2n+1}(F)$ and $H_r=\U_{2r}(F)$ with 
$n\ge 0$ and $r>0$ in the followings.

\subsection{Subgroups and embeddings}\label{SSS:embed}
We identify various subgroups of $\U(\mathbb{V}_{2n+1})$ with lower rank unitary groups and general linear groups via
embeddings. More specifically, if $0\le n_0\le n$ is an integer, we embed $\U({\mathbb{V}_{2n_0+1}})$ into 
$\U(\mathbb{V}_{2n+1})$ as the subgroup fixing $e_1,\ldots, e_{n-n_0}, f_{n-n_0},\ldots, f_1$. In coordinates, we have
\[
G_{n_0}\ni g_0
\mapsto
\begin{pmatrix}
I_{n-n_0}&&\\
&g_0&\\
&&I_{n-n_0}
\end{pmatrix}\in G_n.
\]
On the other hand, if $1\le r\le n$, then we identify $\U(\mathbb{V}_{2r})$ as the subgroup fixing 
$e_{r+1},\ldots, e_n, v_0, f_n,\ldots, f_{r+1}$. In coordinates, we have
\[
H_r\ni\pMX{a}{b}{c}{d}\longmapsto
\begin{pmatrix}
a&&b\\
&I_{2(n-r)+1}\\
c&&d
\end{pmatrix}\in G_n
\]
for some $a,b,c,d\in{\rm Mat}_{r\x r}(E)$. The general linear group $\GL(\mathbb{V}_r)$ embeds as the subgroup of 
$\U(\mathbb{V}_{2n+1})$ stabilizing the subspace spanned by $e_1,\ldots, e_r$ which in addition fixing the vectors 
$e_{r+1},\ldots, e_n, v_0, f_n,\ldots, f_{r+1}$. In coordinates, we have
\[
{\rm GL}_r(E)\ni a
\mapsto
\hat{a}
:=
\begin{pmatrix}
a&&\\
&I_{2(n-r)+1}&\\
&&a^*
\end{pmatrix}\in G_n
\]
where $a^*:=J_r{}^t\b{a}^{-1}J_r$. More generally, for a subset $S$ of $\GL_r(E)$, we denote 
$\hat{S}=\{\hat{a}\mid a\in S\}$.  In this paper, we do not distinguish the these groups with their images in 
$G_n$. These shall not cause serious confusions. In particular, we see that $\GL_r(E)$ is indeed contained in $H_r$.
Let $A_r\subset\GL_r(E)$ be the diagonal tours and $T_r=\hat{A}_r$. Then $T_r$ is the maximal diagonal tours of $H_r$
and the maximal diagonal tours of $G_n$ is $E^1\cdot T_n$. Finally, let $B_n\subset G_n$ be the upper triangular 
Borel subgroup and $N_n\subset B_n$ be its unipotent radical.

\subsubsection{Root subgroups}
Let $n\ge 1$ and put $N=2n+1$. We introduce certain root subgroups of $G_n$ that will be used in the later computations. 
Let $i$ be an integer between $1$ and $N$, we define $i^*=N+1-i$. Then $1\le i^*\le N$ and $(i^*)^*=i$.
For integers $1\le i, j\le N$, let $E_{ij}$ be the $N$-by-$N$ matrix whose $(i,j)$-entry is $1$ while all other entries are $0$.
Let $1\le i<j\le n$, $1\le k\le n$ and $y\in E$. We define the following elements in $G_n$:
\[
\chi_{\e_i-\e_j}(y)=I_N+yE_{ij}-\bar{y}E_{j^* i^*},
\]
\[
\chi_{\e_i+\e_j}(y)=I_N+yE_{ij^*}-\bar{y}E_{ji^*},
\]
\[
\chi_{\e_k}(y)=I_N+yE_{k,n+1}-\bar{y}E_{n+1, k^*}-2^{-1}y\bar{y}E_{k k^*}
\]
and also
\[ 
\chi_{-\e_i+\e_j}(y)={}^t\chi_{\e_i-\e_j}(y),\quad\chi_{-\e_i-\e_j}(y)={}^t\chi_{\e_i+\e_j}(y),\quad
\chi_{-\e_k}(y)={}^t\chi_{\e_k}(y).
\]
More generally, if $S\subseteq E$ is a subset and $\alpha\in\stt{\pm e_i\pm e_j, \pm e_k\mid 1\le i<j\le n, 1\le k\le n}$, 
we denote
\[
\chi_{\alpha}(S)=\{\chi_\alpha(s)\mid s\in S\}.
\]
Note that if $1\le r\le n$, then $\chi_\alpha(E)\subset H_r$ for every $\alpha\in\stt{\pm \e_i\pm\e_j\mid 1\le i<j\le r}$. 
On the other hand, we have $\chi_{\pm\e_k}(E)\cap H_r=\stt{I_N}$ for all $1\le k\le n$.

\subsubsection{Wely elements}\label{SSS:Weyl group}
Let $\sW_{G_n}$ be the spherical Weyl group of the diagonal tours $E^1\cdot T_n$ in $G_n$. Then $\sW_{G_n}$ is 
isomorphic to $\frak{S}_n\rtimes\mathbb{Z}^n_2$, where $\mathfrak{S}_n$ is the permutation group of the set 
$\cI_n:=\stt{1,2,\ldots, n}$. To obtain a set of representatives of $\sW_{G_n}$ in $G_n$, let 
$\sW_{{\rm GL}_n}\subset{\rm GL}_n(E)$ be the subgroup consisting of permutation matrices. 
For a (possibly empty) subset $S\subseteq\mathcal{I}_n$ and an element $y\in E^\times$, we put
\[
w_{S}(y)
=
\sum_{1\le i\le n,\,i\notin S} (E_{ii}+E_{i^* i^*})
+
E_{n+1,n+1}
+
\sum_{j\in S} (\bar{y}^{-1}E_{j j^*}+yE_{j^* j}).
\]
Now if we attach an element $y_S\in E^\times$ for each subset $S\subseteq\mathcal{I}_n$, then 
\begin{equation}\label{E:set of rep}
\{\hat{w}\cdot w_{S}(y_S)\mid w\in\sW_{{\rm GL}_n},\,\, S\subseteq\mathcal{I}_n\}
\end{equation}
gives a complete set of representatives of $\sW_{G_n}$ in $G_n$. Observe that this set of representatives is actually 
contained in $H_n$. In fact, the spherical Weyl group $\sW_{H_n}$ of $T_n$ in $H_n$ is also isomorphic to 
$\frak{S}_n\rtimes\mathbb{Z}^n_2$; hence \eqref{E:set of rep} also gives a complete set of representatives of 
$\sW_{H_n}$ in $H_n$. In general, if we replace $n$ in \eqref{E:set of rep} with any $1\le r\le n$, then we get a 
complete set of representatives of the spherical Weyl group $\sW_{H_r}$ of $T_r$ in $H_r$.

\subsection{Open compact subgroups}
For each integer $m\ge 0$, let $K_{n,m}$ be the open compact subgroup of $G_n$ defined by 
\begin{equation}\label{E:K_n,m}
K_{n,m} 
=
\bordermatrix{
              & n     & 1    & n     \cr
    n     & \mathfrak{o}_E &\mathfrak{o}_E &\mathfrak{p}_E^{-m}     \cr
    1     &\mathfrak{p}^m_E& 1+\mathfrak{p}^m_E &\mathfrak{o}_E\cr
    n     &\mathfrak{p}^m_E&\mathfrak{p}^m_E& \mathfrak{o}_E   \cr
            }\cap G_n.
\end{equation}
These are the open compact subgroups used to define newforms and oldforms of $G_n$ introduced by Miyauchi 
(\cite{Miyauchi2013a}) when $n=1$ and by Atobe-Oi-Yasuda (\cite{AtobeOiYasuda}) in general.  
Note that $K_{n,0}$ is the hyperspecial maximal compact subgroup of $G_n$ and
$K_{n,m}\cap G_{n_0}=K_{n_0,m}$ where $0\le n_0\le n$. Put 
\[
R_{r,m}=K_{n,m}\cap H_r.
\]
for $1\le r\le n$. Then $R_{r,0}$ and $R_{r,1}$ are non-conjugate hyperspecial maximal compact subgroups of $H_r$. 
In general, if $m=e+2\ell>1$ for some $e=0,1$ and $\ell\ge 1$, then $R_{r,m}$ is conjugate to $R_{r,e}$ by the tours 
element 
\[
\begin{pmatrix}
\varpi^\ell I_r&\\
&\varpi^{-\ell}I_r
\end{pmatrix}
\in T_r.
\] 
Let
\begin{equation}\label{E:Weyl group}
\sW_m
=
\{\hat{w}\cdot w_S(\varpi^m)\mid w\in\sW_{{\rm GL}_n},\,\,S\subseteq\mathcal{I}_n\}.
\end{equation}
Then $\sW_{G_n}$ gives a complete set of representatives of $\sW_{G_n}$ (cf. \eqref{E:set of rep}) which is 
in addition contained in $R_{n,m}$.

\begin{lm}\label{L:Iwasawa decomp}
Let $e=0,1$. We have $G_n=B_nK_{n,e}$. 
\end{lm}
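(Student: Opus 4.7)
For $e=0$, the statement $G_n=B_nK_{n,0}$ is the classical Iwasawa decomposition for the hyperspecial maximal compact subgroup $K_{n,0}$ of $G_n$.

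The substantive case is $e=1$. Since $G_n=B_nK_{n,0}$ is already known, it suffices to establish the inclusion $K_{n,0}\subseteq B_nK_{n,1}$, and my plan is to prove this by combining the Bruhat decomposition of the residue-field group $G_n(k_F)=\U_{2n+1}(k_F)$ with the specific shifts in the entry conditions defining $K_{n,1}$. For each $k\in K_{n,0}$, I would first reduce modulo $\varpi$ and apply Bruhat to the image, writing $k=b_1 w b_2\cdot v$ with $b_1,b_2\in B_n\cap K_{n,0}$, $w\in\sW_0$, and $v\in K_{n,0}^{(1)}$ in the first principal congruence subgroup.

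Three ingredients then drive the argument. First, a direct block-by-block comparison of the defining matrix entry conditions with \eqref{E:K_n,m} yields $K_{n,0}^{(1)}\subseteq K_{n,1}$: every off-diagonal entry of an element in $K_{n,0}^{(1)}$ lies in $\frak{p}_E\subseteq\frak{p}_E^{-1},\frak{o}_E$, and every diagonal entry lies in $1+\frak{p}_E$, which is compatible with every block of $K_{n,1}$. Hence $v$ is already in $K_{n,1}$. Second, the explicit formula for $w_S(y)$ gives the identity $w_S(1)=t_S^{-1}w_S(\varpi)$, where $t_S\in T_n\subseteq B_n$ is the diagonal torus element with entries $\varpi^{-1}$ at positions $j\in S$ and $\varpi$ at the symmetric positions $j^*$; conjugating by $\hat{w}\in\sW_{\GL_n}$ preserves $T_n$, so the inclusion $\sW_0\subseteq B_n\cdot\sW_1$ follows, and since $\sW_1\subseteq K_{n,1}$ by construction, each $w\in\sW_0$ can be rewritten as $w=t\cdot w_1$ with $t\in B_n$ and $w_1\in K_{n,1}$. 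Third, after absorbing $b_1$ and $t$ into a single Borel element, the problem reduces to showing that $w_1 b_2\in B_n K_{n,1}$; this in turn becomes a root-by-root verification that the conjugate $w_1(B_n\cap K_{n,0})w_1^{-1}$ can be split as a $B_n$-part and a $K_{n,1}$-part, using the shifted root subgroups $\chi_{\e_i+\e_j}(\frak{p}_E^{-1}),\chi_{-\e_i-\e_j}(\frak{p}_E),\chi_{-\e_k}(\frak{p}_E)$, and $\chi_{\pm\e_k}(\frak{o}_E)$ belonging to $K_{n,1}$, which are precisely designed to absorb the $\varpi$-shifts introduced by $\sW_1$-representatives.

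The main obstacle will be the last, root-by-root verification: for each element $w_1\in\sW_1$ and each generating root subgroup of $B_n\cap K_{n,0}$, one must explicitly check that the conjugation shifts match the valuation shifts encoded in \eqref{E:K_n,m}. The calculation is elementary but requires a careful case analysis over the spherical Weyl group and the set of positive roots; conceptually, it amounts to the fact that $K_{n,1}$ is a parahoric subgroup whose associated facet in the Bruhat--Tits apartment lies in the closure of the dominant Weyl chamber of $B_n$.
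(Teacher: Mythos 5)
Your two key ingredients are the same as the paper's: the Bruhat decomposition of the residue-field group $\U_{2n+1}(k_E/k_F)$ and the torus rewrite $w_S(1)=t_S^{-1}w_S(\varpi)$ giving $\sW_0\subset B_n\sW_1$. The paper packages these more efficiently, however. It records the Bruhat step in the form $G_n=B_n\sW_0(K_{n,0}\cap K_{n,1})$; here one uses the refined Bruhat cell $\bar B\bar w\bar U$ (unipotent factor on the right) rather than $\bar B\bar w\bar B$, so that the lifted right-hand factor $u\in N_n\cap K_{n,0}$ and the congruence factor $k'\in K_{n,0}^{(1)}$ are both already in $K_{n,0}\cap K_{n,1}$. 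Then $B_n\sW_0(K_{n,0}\cap K_{n,1})=B_n\sW_1(K_{n,0}\cap K_{n,1})\subset B_nK_{n,1}$ finishes the proof with no further computation.

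By contrast, your choice $b_2\in B_n\cap K_{n,0}$ in the Bruhat lift forces the extra step $4$, and the ``root-by-root verification that $w_1(B_n\cap K_{n,0})w_1^{-1}$ splits'' is not needed. If you insist on $b_2\in B_n\cap K_{n,0}$, just split $b_2=t_2u_2$ with $t_2\in(E^1\cdot T_n)\cap K_{n,0}$ diagonal and $u_2\in N_n\cap K_{n,0}$ unipotent; then $w_1b_2=(w_1t_2w_1^{-1})\,(w_1u_2)$, and $w_1t_2w_1^{-1}$ lies in $(E^1\cdot T_n)\cap K_{n,0}\subset B_n$ since $w_1$ normalizes the diagonal torus and permutes its $\frak o_E^\times$-entries, while $u_2\in K_{n,0}\cap K_{n,1}$ so $w_1u_2\in K_{n,1}$. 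Conjugating the unipotent piece by $w_1$ (which is what your ``root-by-root'' step attempts) only creates the complications you then have to absorb. One small slip: $\chi_{-\e_k}(\frak o_E)\not\subset K_{n,1}$ (its $(2,1)$ and $(3,2)$ entries must lie in $\frak p_E$); only $\chi_{\e_k}(\frak o_E)$ and $\chi_{-\e_k}(\frak p_E)$ are contained in $K_{n,1}$.
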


\begin{proof}
When $e=0$, the assertion is the usual Iwasawa decomposition for $G_n$. So it suffices to prove the 
assertion when $e=1$. For this, we follow the proof of \cite[Lemma 2.1]{Miyauchi2013a}. The reduction of 
$K_{n,0}$ modulo $\frak{p}_E$ is isomorphic to ${\rm U}_{2n+1}(k_E/k_F)$. It then follows from the Bruhat decomposition 
of ${\rm U}_{2n+1}(k_E/k_F)$ that 
\[
G_n=B_nK_{n,0}=B_n\sW_0(K_{n,0}\cap K_{n,1}).
\]
Now since $\sW_0\subset B_n\sW_1$, we conclude that 
\[
G_n=B_n\sW_0(K_{n,0}\cap K_{n,1})=B_n\sW_1(K_{n,0}\cap K_{n,1})=B_nK_{n,1}
\]
as desired.
\end{proof}

\begin{lm}\label{L:decomp K}
Let $s_1,s_2\in\mathfrak{S}_n$ and $m\ge 1$.  Put $E^1_m=E^1\cap(1+\frak{p}_E^m)$, viewed as a subgroup of the 
center of $G_n$. Then we have
\begin{align}\label{E:decomp K}
\begin{split}
K_{n,m}
&=
E^1_m\prod_{j=1}^n\chi_{-\e_{s_2(j)}}(\frak{p}_E^m)\prod_{i=1}^{n}\chi_{\e_{s_1(i)}}(\frak{o}_E)R_{n,m}\\
&=
E^1_m\prod_{i=1}^n\chi_{\e_{s_1(i)}}(\frak{o}_E)\prod_{j=1}^{n}\chi_{-\e_{s_2(j)}}(\frak{p}^m_E)R_{n,m}.
\end{split}
\end{align}
\end{lm}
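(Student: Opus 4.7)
The plan is to prove both identities in \eqref{E:decomp K} in three stages: verify the easy inclusion $\mathrm{RHS} \subseteq K_{n,m}$, establish the reverse inclusion via a column-clearing argument and Hensel's lemma, and finally deduce independence on $s_1, s_2$ and equivalence of the two forms via Chevalley commutator relations. For the easy inclusion, each factor on the right-hand side lies in $K_{n,m}$ by direct inspection of \eqref{E:K_n,m}: the central scalars $E^1_m$ sit in the permitted diagonal block, the matrix entries of $\chi_{e_k}(\mathfrak{o}_E)$ and $\chi_{-e_k}(\mathfrak{p}_E^m)$ land in the corresponding off-diagonal blocks at the correct levels, and $R_{n,m} \subseteq K_{n,m}$ by definition; since $K_{n,m}$ is a group, their product lies in $K_{n,m}$.

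For the reverse inclusion, I would exploit that $H_n \subseteq G_n$ is the stabilizer of $e_{n+1}$, so every $r \in R_{n,m}$ fixes $e_{n+1}$. Hence, for $k \in K_{n,m}$, it suffices to find $z \in E^1_m$, $x_i \in \mathfrak{o}_E$, and $y_j \in \mathfrak{p}_E^m$ such that
\[
P \cdot e_{n+1} := z \cdot \prod_{j=1}^n \chi_{-e_{s_2(j)}}(y_j) \cdot \prod_{i=1}^n \chi_{e_{s_1(i)}}(x_i) \cdot e_{n+1} = k \cdot e_{n+1};
\]
the remainder $r = P^{-1} k$ then automatically lies in $H_n \cap K_{n,m} = R_{n,m}$. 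Using the elementary identities $\chi_{e_k}(y) e_{n+1} = e_{n+1} + y e_k$ and $\chi_{-e_k}(y) e_{n+1} = e_{n+1} - \bar y e_{k^*}$, a direct computation shows that the map $(z, x_i, y_j) \mapsto P \cdot e_{n+1}$ has linearization at $(1, 0, 0)$
\[
(\dot z, \dot x_i, \dot y_j) \mapsto \dot z \cdot e_{n+1} + \sum_i \dot x_i \, e_{s_1(i)} - \sum_j \overline{\dot y_j} \, e_{s_2(j)^*},
\]
which is an isomorphism onto the tangent space at $e_{n+1}$ of the unitary orbit of $e_{n+1}$ in $\mathbb{V}_{2n+1}$. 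The dimensions of the parameter space and the target orbit match ($4n+1$ over $F$), and all higher-order corrections lie in strictly deeper ideals; hence Hensel's lemma (the $p$-adic implicit function theorem) yields a unique solution in the claimed ideals. The constraint $z \in E^1$ is automatic from the fact that $k \cdot e_{n+1}$ lies in the unitary orbit of $e_{n+1}$.

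For the last stage, the arbitrariness of $s_1, s_2 \in \mathfrak{S}_n$ and the equivalence of the two forms of \eqref{E:decomp K} follow from Chevalley's commutator formulas: for $i \neq j$ and parameters in the correct ideals, $[\chi_{\pm e_i}(a), \chi_{\pm e_j}(b)]$ lies in a root subgroup of type $\chi_{\pm e_i \pm e_j}(\mathfrak{o}_E) \subseteq R_{n,m}$, while the rank-one commutators $[\chi_{e_i}(a), \chi_{-e_i}(b)]$ produce elements in the maximal torus of $H_n$ times the center, all absorbed into $R_{n,m} \cdot E^1_m$. These relations allow arbitrary reordering of the $\chi$-factors modulo $R_{n,m}$, giving both forms of \eqref{E:decomp K} for every $s_1, s_2$. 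The main obstacle, and the source of the most delicate calculation, is careful bookkeeping of the ``long-root'' contributions coming from the $-\frac{y \bar y}{2} E_{k, k^*}$ terms in $\chi_{\pm e_k}$; these produce corrections along the non-reduced roots $\pm 2 e_k$ that must be absorbed into $R_{n,m}$, which works because the relevant entries fall into the corner blocks $\mathfrak{p}_E^{-m}$ and $\mathfrak{p}_E^m$ of $R_{n,m}$ at exactly the right level.
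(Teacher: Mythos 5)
Your overall structure mirrors the paper's: check $\mathrm{RHS}\subseteq K_{n,m}$ by block inspection, then reduce the reverse inclusion to solving $P\cdot v_0=k\cdot v_0$ using that the stabilizer of $v_0$ in $K_{n,m}$ is $R_{n,m}$. Where you differ is in how you solve this equation, and there you have a genuine gap. You apply Hensel's lemma (the $p$-adic implicit function theorem) at the base point $(1,0,0)$, but that requires the defect $v_0-kv_0$ to be $\frak{p}_E$-small, and here it is not: from the shape of $K_{n,m}$ the $e_i$-coefficients of $kv_0$ are arbitrary elements of $\frak{o}_E$, not of $\frak{p}_E$. Having the linearization at $(1,0,0)$ be an isomorphism onto the tangent space, with higher-order terms in deeper ideals, does \emph{not} by itself give surjectivity onto the full admissible box (e.g.\ $x\mapsto x^p-x$ on $\frak{o}_F$ has unit Jacobian everywhere yet is neither injective nor surjective). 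The repair is the content you are skipping: solve the $e_i$-components of $P\cdot v_0=k\cdot v_0$ exactly first, by a genuinely non-perturbative but \emph{linear} choice of the $x_i\in\frak{o}_E$; after that the residual defect in the $v_0$- and $f_j$-directions lies in $\frak{p}_E^m$ and Hensel applies. This is exactly what the paper does, in the opposite order and one root group at a time: it clears the $f_j$-components with $\chi_{-e}$ factors (a Hermitian-quadratic congruence, settled by Hensel at each step) and then the $e_i$-components linearly with $\chi_e$ factors, while tracking that $b_n$ automatically lands in $E^1_m$.

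Your final paragraph is a genuine alternative to the paper's treatment of the arbitrariness of $s_1,s_2$ and the two orderings: the paper simply re-runs the clearing argument (``the proof of the first one is similar''), whereas you reorder using structure constants. This does work, with the caveat that the $i=j$ swaps $\chi_{\e_i}(a)\chi_{-\e_i}(b)\rightsquigarrow\chi_{-\e_i}(\cdot)\,t\,\chi_{\e_i}(\cdot)$ are rank-one Bruhat-type rewrites, not commutators in the Chevalley sense; the torus factor $t$ is absorbed because $a\in\frak{o}_E$, $b\in\frak{p}_E^m$ force $t\in T_n\cap R_{n,m}$. Your remark about the long-root contributions $\pm2\e_k$ landing in the corner blocks of $R_{n,m}$ at the correct level is accurate and is indeed the delicate point of that route.
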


\begin{proof}
The proof of this lemma is actually similar to that of \cite[Proposition 7.1.3]{Tsai2013}. Here we only prove the second 
identity as the proof of the first one is similar. Since the subgroups $\chi_{\e_j}(\frak{o}_E)$ and 
$\chi_{-\e_i}(\frak{p}_E^m)$ are contained in $K_{n,m}$ (cf. \eqref{E:K_n,m}), it suffices to show that the LHS contains 
$K_{n,m}$. To do so, we use the observation that the stabilizer of $v_0$ in $K_{n,m}$ is $R_{n,m}$. More precisely, for 
a given $k\in K_{n,m}$, we are going to show that there exist $y_1,\ldots, y_n, z_1,\ldots, z_n$ in 
$\mathfrak{o}_E$, and $b_n\in E^1_m$ such that 
\[
b^{-1}_n\prod_{i=1}^n\chi_{\e_{s_2(n+1-i)}}(y_{n+1-i})\prod_{j=1}^n\chi_{-\e_{s_1(n+1-j)}}(\varpi^m z_{n+1-j})k v_0=v_0.
\]
Then since 
\[
b_n^{-1}\prod_{i=1}^n\chi_{\e_{s_1(n+1-i)}}(y_{n+1-i})\prod_{j=1}^n\chi_{-\e_{s_2(n+1-j)}}(\varpi^mz_{n+1-j})k\in K_{n,m}
\]
the assertion follows from the above observation.\\

Form the shape of $K_{n,m}$, we can write 
\[
kv_0
=
\sum_{i=1}^n a_i e_i+ bv_0+\sum_{j=1}^n \varpi^m c_j f_j
\]
for some $a_i, c_j$ in $\frak{o}_E$ and $b\in 1+\frak{p}_E^m$. Let $z_1\in\frak{o}_E$ to be chosen and put 
$s_2(1)=\ell$. We have
\[
\chi_{-\e_{\ell}}(\varpi^m z_1)kv_0
=
\sum_{1\le i\le n}^n a_i e_i
+
b_1v_0
+
\varpi^m(c_{\ell}-bz_1-2^{-1}a_{\ell}\varpi^m z_1\bar{z}_1)f_\ell
+
\sum_{1\le j\neq\ell\le n}\varpi^m c_j f_j
\]
where $b_1:=b+\varpi^m a_{\ell}z_1\in 1+\frak{p}_E^m$. Since $2$ and $b$ are units in $\frak{o}_E$ and $\varpi$
is a prime element of both $E$ and $F$, we can apply the proof of Hensel's lemma to conclude that there exists
$z_1\in\frak{o}_E$ such that  
\[
c_{\ell}-bz_1-2^{-1}a_{\ell}\varpi^m z_1\bar{z}_1=0.
\]
Continue this process, we get $z_1,\ldots, z_n$ in $\frak{o}_E$ such that 
\[
\prod_{j=1}^n\chi_{-\e_{s_2(n+1-j)}}(\varpi^m z_{n+1-j})kv_0
=
b_nv_0+\sum_{i=1}^n a_i e_i
\]
for some $b_n\in 1+\frak{p}_E^m$. To proceed, let $y_1\in\frak{o}_E$ to be chosen and put $s_1(1)=r$. We have
\[
\chi_{\e_r}(y_1)\prod_{j=1}^n\chi_{-\e_{s_2(n+1-j)}}(\varpi^m z_{n+1-i})kv_0
=
b_nv_0+(a_r-b_ny_1)e_r
+
\sum_{1\le i\neq r\le n} a_i e_i.
\]
Certainly, we can find $y_1\in\frak{o}_E$ so that $a_r-b_ny_1=0$. Continue this process, we can obtain 
$y_1,\ldots, y_n\in\frak{o}_E$ such that 
\[
k'v_0=b_nv_0
\quad\text{where}\quad
k':=\prod_{i=1}^n\chi_{\e_{s_1(n+1-i)}}(y_{n+1-i})\prod_{j=1}^n\chi_{-\e_{s_2(n+1-j)}}(\varpi^m z_{n+1-j})k\in K_{n,m}.
\]
It remains to show that $b_n\in E^1_m$. But this follows immediately from the following identity:
\[
b_n\bar{b}_n=\langle b_nv_0, b_nv_0\rangle=\langle k'v_0,k'v_0\rangle=\langle v_0,v_0\rangle=1.
\]
This completes the proof.
\end{proof}

\lmref{L:decomp K} has the following consequence. Let $\ell\ge 0$ be an integer and put
\[
t_\ell
=
\begin{pmatrix}
\varpi^\ell I_n&&\\
&1&\\
&&\varpi^{-\ell}I_n
\end{pmatrix}
\in
G_n.
\]
Let $m\ge 0$ be an integer and write $m=e+2\ell$ for some $e=0,1$ and $\ell\ge 0$. Define
\[
K_{n,m}^0=t_\ell K_{n,m} t^{-1}_{\ell}.
\]
Then $K^0_{n,m}=K_{n,m}$ when $m=0,1$ and \lmref{L:decomp K} implies the following decompositions for $m\ge 1$:
\begin{align}\label{E:decomp K^0}
\begin{split}
K^0_{n,m}
&=
E^1_m\prod_{j=1}^n\chi_{-\e_{s_2(j)}}(\frak{p}_E^{\ell+e})\prod_{i=1}^{n}\chi_{\e_{s_1(i)}}(\frak{p}^{\ell}_E)R_{n,e}\\
&=
E^1_m\prod_{i=1}^n\chi_{\e_{s_1(i)}}(\frak{p}_E^\ell)\prod_{j=1}^{n}\chi_{-\e_{s_2(j)}}(\frak{p}^{\ell+e}_E)R_{n,e}.
\end{split}
\end{align}
Here $s_1, s_2$ are elements in $\frak{S}_n$. In particular, we have the following filtrations:
\begin{equation}\label{E:filtration}
K^0_{n,e}\supset K^0_{n,e+2}\supset\cdots\supset K^0_{n,e+2\ell}\supset\cdots\supset R_{n,e}
=\bigcap_{m\equiv e\pmod 2} K^0_{n,m}
\end{equation}
for $e=0,1$. To investigate the double coset decompositions in the next section, we use $K_{n,m}^0$ instead of 
$K_{n,m}$.

\section{Double coset decompositions}\label{S:double coset decomp}
Let $r$ be an integer with $1\le r\le n$. Let $P_{n,r}$ be the parabolic subgroup of $G_n$ containing $B_n$ 
with the Levi decomposition $P_{n,r}=M_{n,r}\rtimes N_{n,r}$, where $M_{n,r}=\widehat{{\rm GL}}_r(E)\times G_{n-r}$
(cf. \S\ref{SSS:embed}) and $N_{n,r}\subset N_n$. Let $\b{P}_{n,r}\supset M_{n,r}$ be the opposite of $P_{n,r}$. 
We first investigate the double coset decomposition:
\[
\bar{P}_{n,r}\backslash G_n/K^0_{n,m}.
\]

\begin{lm}\label{L:representative}
Let $m\ge 0$ be an integer and write $m=e+2\ell$ for some $e=0,1$ and $\ell\ge 0$. Then the set
\[
\stt{\chi_{\e_i}(\varpi^{d})\mid 0\le d\le\ell}
\]
gives a complete set of representatives of $\bar{P}_{n,r}\backslash G_n/K^0_{n,m}$. 
Here $i$ is any integer between $1$ and $r$.
\end{lm}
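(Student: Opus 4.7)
The plan is to reduce the double coset problem to an enumeration of $K^0_{n,m}$-orbits in a compact subgroup via the Iwasawa decomposition, and then to verify the proposed list of representatives by an absorption argument rooted in \eqref{E:decomp K^0}, coupled with a lattice-theoretic invariant for distinctness. Writing $m = e + 2\ell$, the subgroup $K^0_{n,e}$ is a special maximal compact subgroup of $G_n$, and \lmref{L:Iwasawa decomp} gives $G_n = \bar{P}_{n,r} K^0_{n,e}$. Hence
\begin{equation*}
\bar{P}_{n,r} \backslash G_n / K^0_{n,m} \;\cong\; (\bar{P}_{n,r} \cap K^0_{n,e}) \,\backslash\, K^0_{n,e} \,/\, K^0_{n,m},
\end{equation*}
converting the problem into the enumeration of $K^0_{n,m}$-orbits in a compact subgroup.

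To construct the representatives, I would apply the factorization \eqref{E:decomp K^0} to $K^0_{n,e}$ (in which $\ell = 0$) and analyze each factor. The parabolic $\bar{P}_{n,r}$ contains the Levi $\widehat{\GL}_r(E) \times G_{n-r}$, all negative root subgroups outside this Levi, and $\chi_{\pm\e_j}(E)$ for $j > r$ (since for such $j$ both $j$ and $j^*$ lie in the middle block); all these can be absorbed on the left. Similarly, $R_{n,e} \subset K^0_{n,m}$ by \eqref{E:filtration}, and $\chi_{\e_i}(\frak{p}_E^\ell) \subset K^0_{n,m}$ by \eqref{E:decomp K^0} applied to $K^0_{n,m}$, so these can be absorbed on the right. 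This reduces an arbitrary element of $K^0_{n,e}$, modulo the two-sided action, to a product $\prod_{i=1}^{r} \chi_{\e_i}(y_i)$ with $y_i \in \frak{o}_E$. The conjugation action of $\widehat{\GL}_r(\frak{o}_E) \subset \bar{P}_{n,r} \cap K^0_{n,e} \cap K^0_{n,m}$ corresponds to the natural $\GL_r(\frak{o}_E)$-action on the column vector $(y_1, \dots, y_r) \in \frak{o}_E^r$ by left multiplication, and elementary divisor theory over the DVR $\frak{o}_E$ reduces this vector to $(y, 0, \dots, 0)$ with $v_E(y) = \min_i v_E(y_i)$. Finally, the identity $\hat{u} \chi_{\e_i}(y) \hat{u}^{-1} = \chi_{\e_i}(uy)$ for $u \in \frak{o}_E^\times$ (applied with $\hat{u}^{-1}$ absorbed on the left and $\hat{u}$ on the right) normalizes $y$ to $\varpi^d$ with $0 \le d \le \ell$, and the Weyl group $\sW_{{\rm GL}_n}$ acting by permutations transports $\chi_{\e_1}$ to $\chi_{\e_i}$ for any prescribed $i \in \{1, \dots, r\}$.

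For distinctness, I would identify $\bar{P}_{n,r} \backslash G_n$ with the variety of isotropic $r$-dimensional subspaces $U \subset \mathbb{V}_{2n+1}$, with $W$ the reference subspace. A direct computation gives $\chi_{\e_i}(\varpi^d) \cdot f_i = f_i - \varpi^d v_0 - 2^{-1}\varpi^{2d} e_i$, so the subspace $U_d := \chi_{\e_i}(\varpi^d) \cdot W$ differs from $W$ by a perturbation of exact valuation $d$ along the anisotropic line $Ev_0$. The lattice structure preserved by $K^0_{n,m}$ assigns a well-defined invariant $\min(d, \ell)$ to each double coset, and the $\ell + 1$ values $d = 0, 1, \dots, \ell$ yield distinct invariants, with $d = \ell$ corresponding to the identity coset since $\chi_{\e_i}(\varpi^\ell) \in K^0_{n,m}$.

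The main obstacle is the absorption step: one must verify that all factors in \eqref{E:decomp K^0} outside the $\chi_{\e_i}$ directions with $i \le r$ can indeed be commuted past the remaining $\chi_{\e_i}$ factors and absorbed into the two-sided action, and that the commutators generated in the process remain inside $\bar{P}_{n,r} \cap K^0_{n,e}$ or $K^0_{n,m}$. The commutation relations among root subgroups (such as $[\chi_{\e_i}, \chi_{\e_j}] \in \chi_{\e_i + \e_j}$ up to signs) yield elements with controlled valuations, and careful matrix bookkeeping in the spirit of \cite[Chapter 5]{RobertsSchmidt2007} and \cite{Miyauchi2013c} completes the verification, with additional care needed for the involution on $E/F$ and the mixed valuations in the block structure of $K^0_{n,m}$.
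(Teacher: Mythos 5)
Your overall route --- Iwasawa reduction to $(\bar{P}_{n,r}\cap K^0_{n,e})\backslash K^0_{n,e}/K^0_{n,m}$, absorption of most of \eqref{E:decomp K^0} on the two sides, normalization of the surviving $\chi_{\e_i}$-block, Weyl conjugation --- parallels the paper. Your reduction of $\prod_{i\le r}\chi_{\e_i}(y_i)$ to a single $\chi_{\e_1}(\varpi^d)$ via elementary divisors under conjugation by $\widehat{\GL}_r(\frak{o}_E)$ is a genuine alternative to the paper's step, which instead peels the factors off one at a time with the explicit commutation identity \eqref{E:id}; yours is arguably cleaner, but you must first record that the Heisenberg-center correction generated when you conjugate the non-commuting product lies in the groups $\chi_{\e_i+\e_j}(\frak{o}_E)$ and $\chi_{2\e_k}(\frak{o}_F)$, which lie in $K^0_{n,m}$ and, being central in the Siegel unipotent radical, can be slid past the $\chi_{\e_1}$-factor and absorbed on the right. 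Meanwhile, the ``main obstacle'' you flag in the absorption step is not actually there: choosing $s_1$ in \lmref{L:decomp K} so that $\chi_{\e_{r+1}},\dots,\chi_{\e_n}$ precede $\chi_{\e_1},\dots,\chi_{\e_r}$ already places every factor other than $\prod_{i\le r}\chi_{\e_i}(\frak{o}_E)$ in $\bar{P}_{n,r}$ (on the left) or in $R_{n,e}\subset K^0_{n,m}$ (on the right), so no commutation is required. Also note that \eqref{E:decomp K^0} is only valid for $m\ge 1$, so for $e=0$ the hyperspecial $K_{n,0}=K^0_{n,0}$ needs the separate Bruhat/Iwahori-type factorization that the paper supplies.

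The real gap is distinctness. You assert that ``the lattice structure preserved by $K^0_{n,m}$ assigns a well-defined invariant $\min(d,\ell)$ to each double coset,'' but you neither construct this invariant as a function on $G_n$ nor verify its bi-invariance under $\bar{P}_{n,r}\times K^0_{n,m}$. The natural candidate --- the valuation of $\langle g v_0, f_1\rangle$ --- is not manifestly invariant under either side, and establishing enough control over it is precisely the content of the paper's final step: one supposes $\chi_{\e_1}(\varpi^{d})=h\chi_{\e_1}(\varpi^{d'})k$ with $h\in\bar{P}_{n,r}$ compact and $k\in K^0_{n,m}$, computes $\langle\chi_{\e_1}(\varpi^{d})v_0,f_i\rangle=\langle\chi_{\e_1}(\varpi^{d'})kv_0,h^{-1}f_i\rangle$ in two ways, and reads off a contradiction from the valuations of the coefficients $a_{ij}$ in $h^{-1}f_i=\sum_j a_{ij}f_j$. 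Without this computation, or an equivalent rigorous construction of your invariant, your argument only bounds the number of double cosets from above and does not show the representatives are pairwise inequivalent.
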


\begin{proof}
It suffices to prove the lemma for $i=1$ since $\chi_{\e_i}(y)$ and $\chi_{\e_1}(y)$ are conjugate by an element in 
$\widehat{\sW}_{\GL_r}$ for $1\le i\le r$, and $\widehat{\sW}_{\GL_r}$ (cf. \S\ref{SSS:Weyl group}) is contained in both 
$\bar{P}_{n,r}$ and $K^0_{n,m}$. We begin with the case when $m=0,1$. Recall that $K_{n,m}^0=K_{n,m}$ and note that
$e=m$ and $\ell=0$ in this case. Since $w_{\mathcal{I}_n}(\varpi^e)\in K_{n,e}$ and 
$\bar{B}_n=w_{\cI_n}(\varpi^e)B_n w_{\cI_n}(\varpi^e)^{-1}$ 
we get from \lmref{L:Iwasawa decomp} that 
\begin{equation}\label{E:general Iwasawa decomp}
G_n
=
w_{\cI_n}(\varpi^e)G_n w_{\cI_n}(\varpi^e)^{-1}
=
w_{\cI_n}(\varpi^e)B_nK_{n,e} w_{\cI_n}(\varpi^e)^{-1}
=
\bar{B}_nK_{n,e}
=
\bar{P}_{n,r}K_{n,e}.
\end{equation}
As $\chi_{\e_1}(1)\in K_{n,e}$, the case $m=0,1$ is proved.\\

Suppose from now on that $m\ge 2$ so that $\ell\ge 1$. We split the proof into three steps. The first step is to show that 
the set 
\begin{equation}\label{E:1st rep}
\stt{\prod_{i=1}^r\chi_{\e_i}(\varpi^{d_i})\mid 0\le d_1\le\cdots\le d_r\le\ell}
\end{equation}
contains a complete set of representatives of $\b{P}_{n,r}\backslash G_n/K^0_{n,m}$. 
Observe that by \eqref{E:general Iwasawa decomp}, 
\[
\b{P}_{n,r}\backslash G_n/K^0_{n,m}
=
(\bar{P}_{n,r}\cap K_{n,e})\backslash K_{n,e}/K^0_{n,m}.
\]
We therefore separate the case $e=0$ from $e=1$. Assume first that $e=1$. Then we have the decomposition
\[
K_{n,1}
=
E_1^1\prod_{j=1}^n\chi_{-\e_j}(\frak{p}_E)
\prod_{i=r+1}^n\chi_{\e_i}(\frak{o}_E)
\prod_{i=1}^r\chi_{\e_i}(\frak{o}_E)
R_{n,1}
\]
by \lmref{L:decomp K}. Since the subgroups $\chi_{\e_i}(\frak{p}_E^\ell)$ for $1\le i\le r$ and $R_{n,1}$ are contained in 
$K^0_{n,m}$ (cf. \eqref{E:decomp K^0}); while the subgroups $E_1^1$, $\chi_{\e_{-j}}(\frak{p}_E)$ for $1\le j\le n$ and 
$\chi_{\e_j}(\frak{o}_E)$ for $r+1\le j\le n$ are contained in $\bar{P}_{n,r}\cap K_{n,1}$, we see that 
\[
\stt{\prod_{i=1}^r\chi_{\e_i}(\varpi^{d_i}y_i)\mid \text{$d_i\ge 0$, $y_i\in\frak{o}_E^\x$ for $1\le i\le r$ and one of 
$d_i\le\ell$}}
\]
contains a complete set of representatives of the double coset. 
Because we are allowing to conjugate elements in the subgroup
\[
T_n\cap R_{n,1}
=
\stt{{\rm diag}(y_1,\ldots, y_n, 1, \bar{y}_n^{-1},\ldots,\bar{y}_1^{-1})\mid y_1,\ldots,y_n\in \frak{o}_E^\x}
\]
a complete set of representatives can be chosen in the set
\begin{equation}\label{E:rough rep}
\stt{\chi(d_1,\ldots, d_r):=\prod_{i=1}^r\chi_{\e_i}(\varpi^{d_i})\mid\text{$d_i\ge 0$ for $1\le i\le r$ and one of $d_i\le\ell$}}.
\end{equation}
To reduce further, we conjugate elements in $\widehat{\sW}_{{\rm GL}_r}$.
If $w\in\sW_{{\rm GL}_r}$ is the element corresponding to $s\in\frak{S}_r$, then 
\[
\hat{w}\cdot \chi(d_1,\ldots, d_r)\cdot\hat{w}^{-1}=\chi(d_{s(1)},\ldots, d_{s(r)}).
\]
In particular,  we may in addition assume that $d_1\le\cdots\le d_r$ in \eqref{E:rough rep}. Since
\[
\bar{P}_{n,r}\chi(d_1,\ldots,d_r)K^0_{n,m}
=
\bar{P}_{n,r}\prod_{i=1}^j\chi_{\e_i}(\chi^{d_i}) K^0_{n,m}
=
\bar{P}_{n,r}\chi(d_1,\ldots, d_j,\underbrace{\ell,\ldots,\ell}_{r-j})K^0_{n,m}.
\]
if there exists $0\le  j< r$ such that $d_j<\ell\le d_{j+1}$ (with $d_0:=0$), 
our assertion for the case $e=1$ follows.\\

Suppose that $e=0$. In this case, we don't have an analogue decomposition for $K_{n,0}$, so we use the 
following decomposition 
\[
K_{n,0}=(\bar{N}_n\cap K_{n,0})(N_n\cap K_{n,0})(E^1\cdot T_n\cap K_{n,0})\sW_0
\]
instead, where $\bar{N}_n$ is the unipotent radical of $\bar{B}_n$. Recall that $\sW_0\subset R_{n,0}$
is given by \eqref{E:Weyl group}. 
Clearly, we have 
\[
\bar{N}_n\cap K_{n,0}\subset\bar{P}_{n,r}\cap K_{n,0}
\quad\text{and}\quad
(E^1\cdot T_n\cap K_{n,0})\sW_0\subset K_{n,0}.
\] 
On the other hand, since 
\[
(N_n\cap K_{n,0})
=
\left(\prod_{i=r+1}^n\chi_{\e_i}(\frak{o}_E)\right)
\left(\prod_{i=1}^r\chi_{\e_i}(\frak{o}_E)\right)
(N_n\cap R_{n,0})
\]
and the subgroups $\chi_{\e_i}(\frak{p}_E^\ell)$ for $1\le i\le r$ and $R_{n,0}$ are contained in $K^0_{n,m}$ by 
\eqref{E:decomp K^0}, and moreover, the subgroups $\chi_{\e_i}(\frak{o}_E)$ for $r+1\le i\le n$ are contained in 
$\bar{P}_{n,r}\cap K_{n,0}$, we are reducing to the similar situation as in the case $e=1$. 
Now the same argument prove the assertion for the case $e=0$. This completes the first step.\\

The next step is to show that 
\[
\stt{\chi_{\e_1}(\varpi^{d})\mid 0\le d\le\ell}
\]
contains a complete set of representatives of 
$\bar{P}_{n,r}\backslash G_n/K^0_{n,m}$. Since the assertion for $r=1$ is already proved in the first step, we may assume 
$r>1$. To proceed, the following identity is needed
\begin{equation}\label{E:id}
\chi_{\e_{k-1}}(\varpi^{d'})\chi_{\e_k}(\varpi^{d})
=
\chi_{-\e_{k-1}+\e_k}(\varpi^{d-d'})
\chi_{\e_{k-1}}(\varpi^{d'})
\chi_{-\e_{k-1}+\e_k}(-\varpi^{d-d'})
\chi_{\e_{k-1}+\e_k}(2^{-1}\varpi^{d+d'})
\end{equation}
for $1<k\le n$ and $d,d'\in\mathbb{Z}$. Observe that if $k\le r$ and $d'\le d$, then 
$\chi_{-\e_{k-1}+\e_k}(\varpi^{d-d'})\in\bar{P}_{n,r}$ and both of 
$\chi_{-\e_{k-1}+\e_k}(-\varpi^{d-d'})$ and $\chi_{\e_{k-1}+\e_k}(2^{-1}\varpi^{d+d'})$ are contained in $R_{n,m}$.
Now let 
\[
g
=
\prod_{i=1}^r\chi_{\e_i}(\varpi^{d_i})
=
g_0\chi_{\e_{r-1}}(\varpi^{d_{r-1}})\chi_{\e_r}(\varpi^{d_r})
\] 
be an element in the set given by \eqref{E:1st rep}, where $g_0=\prod_{i=1}^{r-2}\chi_{\e_i}(\varpi^{d_i})$  and we define
$g_0=I_{2n+1}$ when $r=2$.  Applying \eqref{E:id} and noting that $\chi_{-\e_{r-1}+\e_r}(y)$ commutes with 
$\chi_{\e_i}(z)$ for every $1\le i\le r-2$, we get that 
\begin{align*}
\bar{P}_{n,r}gK^0_{n,m}
&=
\bar{P}_{n,r}g_0
\chi_{-\e_{r-1}+\e_r}(\varpi^{d_r-d_{r-1}})
\chi_{\e_{r-1}}(\varpi^{d_{r-1}})
\chi_{-\e_{r-1}+\e_r}(-\varpi^{d_r-d_{r-1}})
\chi_{\e_{r-1}+\e_r}(2^{-1}\varpi^{d_r+d_{r-1}})K^0_{n,m}\\
&=
\bar{P}_{n,r}
\chi_{-\e_{r-1}+\e_r}(\varpi^{d_r-d_{r-1}})
g_0
\chi_{\e_{r-1}}(\varpi^{d_{r-1}})K^0_{n,m}\\
&=
\bar{P}_{n,r}\prod_{i=1}^{r-1}\chi_{\e_i}(\varpi^{d_i})K^0_{n,m}.
\end{align*}
The assertion then follows from continuing this process.
This finishes the second step.\\

The final step is to show that the double cosets 
\[
\bar{P}_{n,r}\chi_{\e_1}(\varpi^d)K^0_{n,m}
\]
for $0\le d\le \ell$ are all distinct. 
Suppose in contrast that there exist $0\le d'<d\le\ell$ and $h\in\bar{P}_{n,r}$, $k\in K^0_{n,m}$ such that 
\begin{equation}\label{E:d=c}
\chi_{\e_1}(\varpi^d)
=
h\chi_{\e_1}(\varpi^{d'})k.
\end{equation}
To obtain a contradiction, note that 
\[
h=\chi_{\e_1}(\varpi^{d})k^{-1}\chi_{\e_1}(-\varpi^{d'})
\in
K^0_{n,m}\cap\bar{P}_{n,r}
\]
implies
\begin{equation}\label{E:hf_j}
h^{-1}f_i
=
\sum_{j=1}^r a_{ij}f_j,
\end{equation}
for some $a_{ij}\in\frak{o}_E$ for $1\le i, j\le r$. Also, from the shape of $K_{n,m}^0$, we can write
\[
kv_0
=
\sum_{i=1}^n \varpi^\ell a_ie_i +bv_0+\sum_{j=1}^n \varpi^{\ell+e} c_j f_j
\]
for some $a_i, c_j$ in $\frak{o}_E$ and $b\in 1+\frak{p}_E^m$. Then we have
\begin{equation}\label{E:kv_0}
\chi_{\e_1}(\varpi^{d'})kv_0
=
\varpi^{d'} b'e_1
+
\sum_{i=2}^n
\varpi^\ell a_i e_i
+
(b-\varpi^{d'} c_1)v_0
+
\sum_{j=1}^n\varpi^{\ell+e}f_j
\end{equation}
where $b':=b+\varpi^{\ell-d'} a_1-2^{-1}\varpi^{d'+\ell+e}c_1\in\frak{o}_E^\times$.
Now let $1\le i\le r$. We are going to compute $\langle\chi_{\e_1}(\varpi^d)v_0, f_i\rangle$ in two ways. On one hand, 
\[
\langle\chi_{\e_1}(\varpi^d)v_0, f_i\rangle
=
\langle v_0+\varpi^d e_1, f_i\rangle
=
\varpi^d\delta_{1i}.
\]
On the other hand, from \eqref{E:d=c}, \eqref{E:hf_j} and \eqref{E:kv_0}, 
\[
\langle\chi_{\e_1}(\varpi^d)v_0, f_i\rangle
=
\langle\chi_{\e_1}(\varpi^{d'})kv_0, h^{-1}f_i\rangle
=
\varpi^{d'}b'\bar{a}_{i1}+\sum_{j=2}^n\varpi^\ell a_j\bar{a}_{ij}.
\]
We thus obtain
\[
\varpi^{d'}b'\bar{a}_{i1}+\sum_{j=2}^n\varpi^\ell a_j\bar{a}_{ij}=\varpi^d\delta_{1i}.
\]
From this identity, we see that $a_{i1}\in\frak{p}_E$ for $2\le i\le n$ since $d'<\ell$. 
It then follows that $a_{11}\in\frak{o}_E^\x$ as $h$ (and hence $h^{-1}$) is contained in $K^0_{n,m}\cap\bar{Q}_{n,r}$ 
(cf. \eqref{E:hf_j}). But this would imply
\[
\varpi^d
=
\varpi^{d'}b'\bar{a}_{11}+\sum_{j=2}^n\varpi^\ell a_j\bar{a}_{1j}\in\varpi^{d'}\frak{o}_E^\times
\]
which contradicts to the assumption $d'<d$. This concludes the final step and also the proof of 
\lmref{L:representative}.
\end{proof}

To state the next lemma, we need some notation. For a given $h\in\bar{P}_{n,r}$, we denote by 
$a_h\in M_{n,r}$ the "Levi part" of $h$ under the Levi decomposition $\bar{P}_{n,r}=M_{n,r}\rtimes\bar{N}_{n,r}$.
On the other hand, we let $\Gamma^0_{r,m}\subseteq{\rm GL}_r(\frak{o}_E)$ be the open compact subgroup 
defined by
\[
\Gamma'_{r,m}
=
\bordermatrix{
              & (r-1)     & 1       \cr
    (r-1) & \mathfrak{o}_E &\mathfrak{p}^m_E    \cr
    1     &\mathfrak{o}_E& 1+\mathfrak{p}^m_E \cr
            }\cap {\rm GL}_r(\frak{o}_E).
\]

\begin{lm}\label{L:Levi}
Let $m=e+2\ell\ge 0$ be an integer with $e=0,1$ and $\ell\ge 0$. Let $r, d$ be integers with $1\le r\le n$ and 
$0\le d\le \ell$. Let $\b{M}^d_{n,r}\subset M_{n,r}$ be the subgroup defined by 
\[
\bar{M}^d_{n,r}
=
\stt{a_h\mid h\in\chi_{\e_r}(\varpi^d)K^0_{n,m}\chi_{\e_r}(\varpi^d)^{-1}\cap\bar{P}_{n,r}}.
\]
Then we have $\bar{M}^d_{n,r}=\widehat{\Gamma}'_{r,\ell-d}\times K^0_{n-r,e+2d}$.
\end{lm}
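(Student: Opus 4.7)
The plan is a direct block-matrix computation. Set $\xi_d := \chi_{\e_r}(\varpi^d)$; note that $\xi_d$ lies in $N_{n,r}$ and has non-identity entries only at the three positions $(r, n+1)$, $(n+1, r^*)$, and $(r, r^*)$. The argument refines the coarse $(n, 1, n)$ block partition of $G_n$ to the finer $(r, n-r, 1, n-r, r)$ partition, so that $\bar{P}_{n,r}$ becomes lower block-triangular in the coarse $(r, 2(n-r)+1, r)$ grouping and the conjugation by $\xi_d$ interacts cleanly with this block structure.

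First I would spell out the entry-wise congruence conditions on $K^0_{n,m} = t_\ell K_{n,m} t_\ell^{-1}$ in the refined partition: each of the twenty-five sub-blocks lies in a specific power of $\mathfrak{p}_E$, obtained from the $(n, 1, n)$ description of $K_{n,m}$ by multiplying by $\varpi^{a_\alpha - a_\beta}$, with $(a_1, a_2, a_3, a_4, a_5) = (\ell, \ell, 0, -\ell, -\ell)$. Next, because $\xi_d$ is so sparse, the conjugation $k \mapsto \xi_d k \xi_d^{-1}$ reduces to an explicit sequence of elementary row operations on rows $r$ and $n+1$ (adding multiples of rows $n+1$ and $r^*$) followed by elementary column operations on columns $n+1$ and $r^*$ (adding multiples of columns $r$ and $n+1$), with coefficients $\pm\varpi^d$ or $\pm 2^{-1}\varpi^{2d}$. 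I would write out closed-form expressions for each entry $(\xi_d k \xi_d^{-1})_{ij}$.

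Then I would impose $\xi_d k \xi_d^{-1} \in \bar{P}_{n,r}$, equivalently the vanishing of the $(1,2)$, $(1,3)$, $(2,3)$ coarse blocks. This produces a system of equations in the entries of $k$ that simultaneously (i) determines the entries of $k$ corresponding to the unipotent radical $\bar{N}_{n,r}$-part of $h$, and (ii) sharpens the congruences on several of the remaining entries: most notably $k_{i, r} \in \mathfrak{p}_E^{\ell - d}$ for $i < r$ (derived from $k_{i, n+1} = \varpi^d k_{i, r}$ together with $k_{i, n+1} \in \mathfrak{p}_E^\ell$) and $k_{r, r} \in 1 + \mathfrak{p}_E^{\ell - d}$. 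Reading off the Levi part of $h$, the coarse $(1,1)$ block yields the $\GL_r$-factor carrying exactly the congruences defining $\Gamma'_{r, \ell-d}$, while the coarse $(2,2)$ block yields the $G_{n-r}$-factor exhibiting the congruence pattern of $K^0_{n-r, e+2d}$. For the reverse inclusion, I would check directly that for any $\hat{a} \in \widehat{\Gamma}'_{r, \ell-d}$ and $g_0 \in K^0_{n-r, e+2d}$, the element $\xi_d^{-1} \hat{a} g_0 \xi_d$ belongs to $K^0_{n,m}$, again by entry-wise inspection.

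The main obstacle is bookkeeping rather than conceptual: tracking how the successive conjugations by $t_\ell$ and $\xi_d$, combined with the $\bar{P}_{n,r}$-vanishing conditions, shift the congruence levels of multiple sub-blocks simultaneously. The most delicate point is the drop of the $G_{n-r}$ level from $m$ down to $e + 2d$: this arises because, once the parabolic vanishing has killed the contributions from rows and columns $r$ and $r^*$ (which are external to the $G_{n-r}$ sub-matrix), the remaining effect of $\xi_d$ on the central column $n+1$ of that sub-matrix is precisely what an analogous torus conjugation by $\mathrm{diag}(\varpi^d I_{n-r}, 1, \varpi^{-d} I_{n-r})$ would produce on a $G_{n-r}$-type subgroup, rescaling the relevant entries by $\varpi^{\pm d}$.
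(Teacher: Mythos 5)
Your plan for the inclusion $\bar{M}^d_{n,r}\subseteq\widehat{\Gamma}'_{r,\ell-d}\times K^0_{n-r,e+2d}$ is sound and is essentially a block-entry translation of what the paper does with Hermitian pairings: the paper's computation of $\langle s^{-1}_{r,d}hs_{r,d}v_0,f_j\rangle$ isolates exactly the matrix entries your equations $h_{i,n+1}=0$ isolate. One notational slip worth flagging: you write $k_{r,r}\in 1+\frak{p}_E^{\ell-d}$, but the Levi's $(r,r)$-entry is $h_{r,r}=(\xi_d k)_{r,r}=k_{r,r}+\varpi^d k_{n+1,r}-2^{-1}\varpi^{2d}k_{r^*,r}$, not $k_{r,r}$; the correct chain is to use $h_{r,n+1}=0$ to get $\varpi^d h_{r,r}=(\xi_d k)_{r,n+1}=k_{r,n+1}+\varpi^d k_{n+1,n+1}-2^{-1}\varpi^{2d}k_{r^*,n+1}\in \varpi^d+\frak{p}_E^{\ell}$, hence $h_{r,r}\in 1+\frak{p}_E^{\ell-d}$. (Working with $k_{r,r}$ directly gives only the weaker $1+\frak{p}_E^{\min(\ell-d,2d+e)}$, because $k_{r^*,r}$ is controlled only in $\frak{p}_E^{e}$.)

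The reverse inclusion is where there is a genuine gap. You propose to verify $\xi_d^{-1}\hat{a}g_0\xi_d\in K^0_{n,m}$ directly for $\hat{a}\in\widehat{\Gamma}'_{r,\ell-d}$ and $g_0\in K^0_{n-r,e+2d}$, but this is false as soon as $d<\ell$. Take $n=2$, $r=1$, $e=d=0$, $\ell=1$, $m=2$, $\hat{a}=I$, and $g_0=\chi_{-\e_2}(1)\in K^0_{1,0}$ (in $G_1$ embedded in $G_2$). One computes
\[
\xi_0^{-1}g_0\xi_0
=
I - E_{12} + E_{32} - E_{43} + E_{45} - 2^{-1}E_{42},
\]
whose $(3,2)$-entry is $1\notin\frak{p}_E$, so it is not in $K^0_{2,2}$ (indeed, $g_0$ itself already fails the $(2,1)$-block condition of $K^0_{2,2}$). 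The point you are missing is that $a$ need only arise as the Levi part $a_h$ of \emph{some} $h\in\bar{P}_{n,r}$ with $\xi_d^{-1}h\xi_d\in K^0_{n,m}$; in general the required $h$ must carry a nontrivial $\bar{N}_{n,r}$-component that corrects the off-diagonal damage created by the conjugation. The paper's proof does exactly this: for instance, to realize the Levi element $\chi_{-\e_j}(\varpi^{e+d}y)$ it takes $h=\chi_{-\e_1-\e_j}(\varpi^{e}\bar{y})\,\chi_{-\e_j}(\varpi^{e+d}y)\in\bar{P}_{n,1}$ and checks $s^{-1}_{1,d}hs_{1,d}\in K^0_{n,m}$ (and similarly uses $\chi_{-\e_1+\e_j}(-y)\chi_{\e_j}(\varpi^d y)$ for $\chi_{\e_j}(\varpi^d y)$, then reduces $r>1$ to $r=1$). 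Your write-up would need to add this construction of the unipotent-radical correction; without it the equality is not established, only one containment.
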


\begin{proof}
For convenient, we also denote $s_{r,d}=\chi_{\e_r}(\varpi^d)$, $U_E^0=\frak{o}_E^\x$ and 
$U_E^j=1+\frak{p}_E^j\subset\frak{o}_E^\x$ for $j>0$. Note that the lemma holds trivially when $m=0,1$ since in this case,
$\ell=d=0$ and $s_{r,0}\in K^0_{n,m}=K_{n,m}$. So we assume in the rest of the proof that $m\ge 2$ (so that $\ell\ge 1$). 
We start with the case $r=1$. The first step is to show 
\begin{equation}\label{E:r=1 contain}
\widehat{U}_E^{\ell-d}\times K^0_{n-1,e+2d}\subseteq\bar{M}^d_{n,1}.
\end{equation}
To do so, for a given $a$ in the LHS of \eqref{E:r=1 contain}, our strategy is to find $h\in\b{P}_{n,1}$ with $a_h=a$ and 
$s_{1,d}^{-1}hs_{1,d}\in K^0_{n,m}$. This would imply $a\in\b{M}^d_{n,1}$.  Now let us write 
\[
s_{1,d}
=
\begin{pmatrix}
1&\alpha&\beta\\
&1_{2n-1}&\alpha'\\
&&1
\end{pmatrix}
\]
with 
\[
\alpha
=
(\underbrace{0,\ldots,0}_{n-1}, \varpi^d, \underbrace{0,\ldots,0}_{n-1}),
\quad
\alpha'
=
-{}^t\alpha
\quad\text{and}\quad
\beta=-2^{-1}\varpi^{2d}.
\]
Then
\[
s^{-1}_{1,d}
=
\begin{pmatrix}
1&-\alpha&\beta\\
&1_{2n-1}&-\alpha'\\
&&1
\end{pmatrix}.
\]
Let 
\[
h'
=
\begin{pmatrix}
a&&\\
&h&\\
&&\bar{a}^{-1}
\end{pmatrix}
\in
\bar{P}_{n,1}
\]
with $a\in U_E^{\ell-d}$ and $h\in R_{n-1,e}$. Then a direct computation shows 
\[
s_{1,d}^{-1}h's_{1,d}
=
\begin{pmatrix}
a&a\alpha-\alpha h& a\beta-\alpha h\alpha'+\beta\bar{a}^{-1}\\
&h&h\alpha'-\alpha'\bar{a}^{-1}\\
&&\bar{a}^{-1}
\end{pmatrix}
=
\begin{pmatrix}
a&(a-1)\alpha& a\beta+\beta\bar{a}^{-1}+\varpi^{2d}\\
&h&\alpha'(1-\bar{a}^{-1})\\
&&\bar{a}^{-1}
\end{pmatrix}
\in
K^0_{n,m}.
\]
It follows that 
\begin{equation}\label{E:1 contain}
\widehat{U}_E^{\ell-d}\x R_{n-1,e}\subseteq\b{M}^d_{n,1}.
\end{equation}
To establish \eqref{E:r=1 contain}, it remains to show that $\hat{1}\x E^1_m$,  $\hat{1}\x\chi_{\e_{j}}(\frak{p}^d_E)$ and 
$\hat{1}\x\chi_{-\e_{j}}(\frak{p}^{e+d})$ are contained in $\b{M}^d_{n,1}$ for $2\le j\le n$. Here we regard $E_m^1$ as 
a subgroup in the center of $G_{n-1}$ and we define $E^1_0=E^1$. We check this case by case. First let $y\in E^1_m$. 
Then
\[
\begin{pmatrix}
1&&\\
&y I_{2n-1}&\\
&&1
\end{pmatrix}
\in \bar{P}_{n,1}
\]
and we have
\[
s_{1,d}^{-1}
\begin{pmatrix}
1&&\\
&y I_{2n-1}&\\
&&1
\end{pmatrix}
s_{1,d}
=
\begin{pmatrix}
1&\alpha(1-y)&2\beta-\alpha y\alpha'\\
&y I_{2n-1}&\alpha'(y-1)\\
&&1
\end{pmatrix}
\in K^0_{n,m}.
\]
This gives $\hat{1}\x E_m^1\subset \b{M}_{n,1}^d$.  Next let $y\in\frak{o}_E$ and $2\le j\le n$. Then 
\[
\chi_{-\e_1+\e_{j}}(-y)\chi_{\e_{j}}(\varpi^i y)\in\bar{P}_{n,1}
\]
and the "Levi part" of $\chi_{-\e_1+\e_{j}}(-y)\chi_{\e_{j}}(\varpi^d y)$ is $\chi_{\e_j}(\varpi^d y)$. Furthermore, if we write 
\[
\chi_{-\e_1+\e_{j}}(-y)
=
\begin{pmatrix}
1&&\\
x&I_{2n-1}&\\
0&x'&1
\end{pmatrix}
\quad\text{and}\quad
\chi_{\e_{j}}(\varpi^d y)
=
\begin{pmatrix}
1&&\\
&h&\\
&&1
\end{pmatrix}
\]
with 
\[
{}^tx=(\underbrace{0,\ldots,0}_{j-2},-y,\underbrace{0,\ldots,0}_{2n-j}),
\quad
x'=-{}^t\bar{x}J_{2n-1}
\quad\text{and}\quad
h\in G_{n-1}.
\]
then 
\begin{align*}
s_{1,d}^{-1}\chi_{-\e_1+\e_{j}}(-y)\chi_{\e_j}(\varpi^d y)s_{1,d}
=
\begin{pmatrix}
1&\alpha-\alpha h+\beta x' h & 2\beta-\alpha h\alpha'+\beta x' h\alpha'\\
x&x\alpha+h-\alpha'x' h&x\beta+h\alpha'-\alpha' x' h\alpha'-\alpha'\\
0&x'h &1+x' h\alpha'
\end{pmatrix}
\in K_{n,m}^0.
\end{align*}
It follows that $\hat{1}\x\chi_{\e_j}(\frak{p}^d_E)\subset\b{M}^d_{n,1}$. Similarly, if $y\in\frak{o}_E$ and 
$2\le j\le n$, then 
\[
\chi_{-\e_1-\e_{j}}(\varpi^e\bar{y})\chi_{-\e_{j}}(\varpi^{e+d} y)\in\bar{P}_{n,1}
\]
and we have 
\[
s_{1,d}^{-1}\chi_{-\e_1-\e_{j}}(\varpi^e\bar{y})\chi_{-\e_{j}}(\varpi^{e+d} y)s_{1,d}
\in
K^0_{n,m}.
\]
Thus we also have $\hat{1}\x\chi_{-\e_{j}}(\frak{p}^{e+d}_E)\subset\bar{M}^d_{n,1}$. 
Now \eqref{E:r=1 contain} follows from these and \eqref{E:1 contain}.
Indeed, if $e+2d\ge 1$, then this is a consequence of \eqref{E:decomp K^0}. On the other hand, if $e=d=0$, then we use
the facts that $K_{n-1,0}$ is generated by the subgroups $E^1$, $R_{n-1,0}$ and $\chi_{\pm\e_j}(\frak{o}_E)$ for 
$2\le j\le n$.\\

To prove the reverse inclusion, first note that $s_{1,d}\in K^0_{n, e+2d}$ and $K^0_{n,m}\subseteq K^0_{n,e+2d}$.
It follows that 
\[
s_{1,d}K^0_{n,m}s_{1,d}^{-1}\cap\bar{P}_{n,1}\subseteq K^0_{n,e+2d}\cap\bar{P}_{n,1}.
\]
and hence 
\[
\bar{M}^d_{n,1}
\subseteq
\hat{\frak{o}}^\x_E\x K^0_{n-1,e+2d}.
\]
Thus it remains to show that if $s_{1,d}^{-1}h s_{1,d}\in K^0_{n,m}$, where
\[
h
=
\begin{pmatrix}
a&&\\
&h'&\\
&&\bar{a}^{-1}
\end{pmatrix}
\begin{pmatrix}
1&&\\
x&1_{2n-1}&\\
y&x'&1
\end{pmatrix}
\in
\bar{P}_{n,1}
\]
then $a\in U_E^{\ell-d}$. For this, let us compute
\begin{align*}
\langle s_{1,d}^{-1}hs_{1,d}v_0, f_1\rangle
&=
\langle h s_{1,d}v_0, s_{1,d}f_1\rangle\\
&=
\langle
h s_{1,d}v_0, f_1-\varpi^iv_0-2^{-1}\varpi^{2d}e_1
\rangle\\
&=
\langle
s_{1,d}v_0, h^{-1}f_1 
\rangle
-
\langle
hs_{1,d}v_0, \varpi^dv_0+2^{-1}\varpi^{2d}e_1
\rangle\\
&=
\langle
v_0+\varpi^d e_1, \bar{a} f_1
\rangle
-
\varpi^d
\langle
h s_{d,1}v_0, v_0+\varpi^d e_1
\rangle
+
2^{-1}\varpi^{2d}
\langle
h s_{1,d} v_0, e_1
\rangle\\
&=
\varpi^d a
-
\varpi^d
\langle
hs_{1,d}v_0, s_{1,d}v_0
\rangle
+
2^{-1}\varpi^{2d}
\langle
h s_{1,d} v_0, s_{1,d}e_1
\rangle\\
&=
\varpi^d a
-
\varpi^d
\langle
s_{1,d}^{-1}h s_{1,d}v_0, v_0
\rangle
+
2^{-1}\varpi^{2d}
\langle
s^{-1}_{1,d} h s_{1,d} v_0, e_1
\rangle.
\end{align*}
Here we use the facts that $h^{-1}f_1=\bar{a}$ and $s_{1,d}e_1=e_1$.
Since $s_{1,d}^{-1}h s_{1,d}\in K^0_{n,m}$, the shape of $K^0_{n,m}$ implies that
\[
\langle s_{1,d}^{-1}hs_{1,d}v_0, f_1\rangle\in\frak{p}_E^\ell,
\quad
\langle s^{-1}_{1,d}hs_{1,d}v_0, v_0\rangle\in U_E^m
\quad\text{and}\quad
\langle
s^{-1}_{1,d} h s_{1,d} v_0, e_1
\rangle\in\frak{p}_E^{e+\ell}.
\]
From these we conclude $a\in U_E^{\ell-d}$. This proves the lemma when $r=1$.\\

Suppose now that $r>1$. The proof of this case is similar to that of $r=1$. In fact, we will apply the result for $r=1$. 
Again, we first establish the following inclusion
\begin{equation}\label{E:contain 5}
\widehat{\Gamma}'_{r,\ell-d}\times K^0_{n-r,e+2d}
\subseteq
\bar{M}^d_{n,r}
\end{equation}
which is clearly a consequence of 
\begin{equation}\label{E:2 contain}
\hat{1}_r\x K^0_{n-r,e+2d}\subset\bar{M}_{n,r}^d
\end{equation}
and
\begin{equation}\label{E:3 contain}
\widehat{\Gamma}'_{r,\ell-d}\x I_{2(n-r)+1}\subset\bar{M}^d_{n,r}.
\end{equation}
To prove \eqref{E:2 contain}, note that $s_{r,d}\in G_{n-r+1}$ (recall the embedding $G_{n-r+1}\hookto G_n$ 
(cf. \S\ref{SSS:embed})) and hence 
\[
s_{r,d}K^0_{n-r+1, m}s_{r,d}^{-1}\subset G_{n-r+1}.
\] 
Since $G_{n-r+1}\cap\bar{P}_{n,r}=\bar{P}_{n-r+1,1}$, we find that 
\[
s_{r,d}K^0_{n-r+1, m}s_{r,d}^{-1}\cap\bar{P}_{n,r}
=
s_{r,d}K^0_{n-r+1, m}s_{r,d}^{-1}
\cap
G_{n-r+1}
\cap
\bar{P}_{n,r}
=
s_{r,d}K^0_{n-r+1, m}s_{r,d}^{-1}\cap\bar{P}_{n-r+1,1}.
\]
Now we can apply the result for $r=1$ (with $n$ replaced by $n-r+1$) to obtain \eqref{E:2 contain}. 
Next, we show \eqref{E:3 contain}. Let $a\in\Gamma'_{r,\ell-d}$ and write
\[
s_{r,d}
=
\begin{pmatrix}
1_n&\alpha&\beta\\
&1&\alpha'\\
&&1_n
\end{pmatrix}
\quad\text{and}\quad
\hat{a}
=
\begin{pmatrix}
A&&\\
&1&\\
&&A^*
\end{pmatrix}
\]
with 
\[
{}^t\alpha
=
(\underbrace{0,\ldots, 0}_{r-1},\varpi^d,\underbrace{0,\ldots,0}_{n-r}),
\quad
\alpha'=-{}^t\alpha J_n,
\quad
\beta
=
-2^{-1}\varpi^{2d}E_{rr}\in{\rm Mat}_{n\x n}(E)
\quad\text{and}\quad 
A
=
\begin{pmatrix}
a&\\
&I_{n-r}
\end{pmatrix}.
\]
Then we have
\[
s_{r,d}^{-1}\hat{a}s_{r,d}
=
\begin{pmatrix}
A&A\alpha-\alpha&A\beta-\alpha\alpha'+\beta A^*\\
&1&\alpha'-\alpha'A^*\\
&&A^*
\end{pmatrix}
\in
K^0_{n,m}.
\]
This shows \eqref{E:3 contain} and hence \eqref{E:contain 5}.\\

Now we prove the reverse inclusion. First note that since $s_{r,d}\in K^0_{n,e+2d}$ and 
$K^0_{n,m}\subseteq K^0_{n,e+2d}$, we have
\[
s_{r,d}K^0_{n,m}s_{r,d}^{-1}\cap\bar{P}_{n,r}\subseteq K^0_{n,e+2d}\cap\bar{P}_{n,r}.
\]
This implies
\begin{equation}\label{E:contain 4}
\bar{M}_{n,r}^d\subseteq\widehat{\GL}_r(\frak{o}_E)\x K^0_{n-r,e+2d}.
\end{equation}
In particular, the case $d=\ell$ is proved since $\Gamma'_{r,0}={\rm GL}_r(\frak{o}_E)$ and we have \eqref{E:contain 5}. 
So assume $d<\ell$ and let $h\in\bar{P}_{n,r}$ such that $s^{-1}_{r,d}hs_{r,d}\in K^0_{n,m}$. Then \eqref{E:contain 4} 
implies
\[
h^{-1}f_j
=
\sum_{k=1}^r a_{jk} f_k
\]
for some $a_{jk}\in\frak{o}_E$ with $1\le j, k\le r$. We need to show that $a_{rr}\in 1+\frak{p}_E^{\ell-d}$ and 
$a_{jr}\in\frak{p}_E^{\ell-d}$ for $1\le j\le r-1$. The idea of which is similar to the case $r=1$.
Form the shape of $K^0_{n,m}$, we see that 
\[
\langle
s^{-1}_{r,d}hs_{r,d} v_0, f_j
\rangle
\in
\frak{p}_E^\ell
\] 
for $1\le j\le r$. If $1\le j\le r-1$, then $s_{r,d}f_j=f_j$, and hence
\begin{align*}
\langle
s^{-1}_{r,d}hs_{r,d} v_0, f_j
\rangle
=
\langle
hs_{r,d}v_0, f_j
\rangle
=
\langle
s_{r,d}v_0, h^{-1}f_j
\rangle
=
\sum_{k=1}^r
\langle
v_0+\varpi^d e_r, a_{jk}f_k
\rangle
=
\varpi^d\bar{a}_{jr}.
\end{align*}
These imply $a_{jr}\in\frak{p}_E^{\ell-d}$ for $1\le j\le r-1$. If $j=r$, then
\begin{align*}
\langle
s^{-1}_{r,d}hs_{r,d} v_0, f_r
\rangle
&=
\langle
hs_{r,d}v_0,  f_r-\varpi^d v_0-2^{-1}\varpi^{2d}e_r
\rangle\\
&=
\langle
s_{r,d}v_0, h^{-1}f_r
\rangle
-
\varpi^d
\langle
hs_{r,d}v_0, v_0+2^{-1}\varpi^d e_r
\rangle\\
&=
\sum_{k=1}^r
\langle
v_0+\varpi^d e_r, a_{rk}f_k 
\rangle
-
\varpi^d
\langle
hs_{r,d}v_0, v_0+\varpi^d e_r
\rangle
+
2^{-1}\varpi^{2d}
\langle
hs_{r,d} v_0, e_r
\rangle\\
&=
\varpi^d\bar{a}_{rr}
-
\varpi^d
\langle
hs_{r,d}v_0, s_{r,d}v_0
\rangle
+
2^{-1}\varpi^{2d}
\langle
hs_{r,d}v_0, s_{r,d}e_r
\rangle\\
&=
\varpi^i\bar{a}_{rr}
-
\varpi^d
\langle
s^{-1}_{r,d}hs_{r,d}v_0, v_0
\rangle
+
2^{-1}\varpi^{2d}
\langle
s^{-1}_{r,d}hs_{r,d}v_0, e_r
\rangle.
\end{align*}
Since (again from the shape of $K^0_{n,m}$)
\[
\langle
s^{-1}_{r,d}hs_{r,d}v_0, v_0
\rangle
\in 1+\frak{p}_E^m
\quad\text{and}\quad
\langle
s^{-1}_{r,d}hs_{r,d} v_0, e_r
\rangle
\in
\frak{p}_E^{e+\ell}
\]
we conclude that $\bar{a}_{rr}\in 1+\frak{p}_E^{\ell-d}$ and hence $a_{rr}\in 1+\frak{p}_E^{\ell-d}$ as desired. 
This finishes the proof.
\end{proof}

Let $\Gamma_{r,m}\subseteq{\rm GL}_r(\frak{o}_E)$ be the usual "congruence 
subgroup", namely, 
\[
\Gamma_{r,m}
=
\bordermatrix{
              & (r-1)     & 1       \cr
    (r-1) & \mathfrak{o}_E &\mathfrak{o}_E    \cr
    1     &\mathfrak{p}^m_E& 1+\mathfrak{p}^m_E \cr
            }\cap {\rm GL}_r(\frak{o}_E).
\]
Then we have the following corollary:

\begin{cor}\label{C:inter Levi}
Let $r, m$ be integers with $1\le r\le n$ and $m\ge 0$. Write $m=e+2\ell$ for some $e=0,1$ and $\ell\ge 0$.
Then for any $1\le j\le r$, the set 
\[
\stt{\chi_{-\e_j}(\varpi^{e+d})\mid 0\le d\le\ell}
\]
forms a complete set of representatives of $P_{n,r}\backslash G_n/ K^0_{n,m}$. Moreover, we have
\[
M^d_{n,r}
:=
\stt{a_h\mid h\in\chi_{-\e_r}(\varpi^{e+d})K^0_{n,m}\chi_{-\e_r}(\varpi^{e+d})^{-1}\cap P_{n,r}}
=
\hat{\Gamma}_{r,\ell-d}\x K^0_{n-r,e+2d}
\]
where $a_h\in M_{n,r}$ denotes the "Levi part" of $h\in P_{n,r}$ under the Levi decomposition 
$P_{n,r}=M_{n,r}\ltimes N_{n,r}$.
\end{cor}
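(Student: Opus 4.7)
The plan is to deduce the corollary from Lemmas \parref{L:representative} and \parref{L:Levi} via conjugation by the Weyl element $w_0:=w_{\cI_n}(\varpi^e)$. First I would verify that $w_0\in K^0_{n,m}$: since $K^0_{n,m}=t_\ell K_{n,m}t_\ell^{-1}$, this reduces to a direct entry check showing that the off-diagonal entries $\varpi^{\pm e}$ of $w_0$ become $\varpi^{\pm m}$ after conjugation by $t_\ell$, matching the ideals in the definition of $K_{n,m}$. Because $w_0$ swaps each line $Ee_j$ with $Ef_j$, the inner automorphism $\sigma:=\Ad(w_0)$ of $G_n$ exchanges $P_{n,r}$ with $\b P_{n,r}$, preserves $K^0_{n,m}$, and preserves the Levi $M_{n,r}$ together with each of its factors. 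A block-matrix computation will show that $\sigma$ acts on $\widehat{\GL}_r$ by $\hat a\mapsto\widehat{\bar a^{-t}}$, and restricts on $G_{n-r}$ to $\Ad(w_{\cI_{n-r}}(\varpi^e))$, which is inner on $K^0_{n-r,e+2d}$ and hence preserves it.

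Next, a routine root-subgroup calculation gives $\sigma(\chi_{\e_r}(\varpi^d))=\chi_{-\e_r}(-\varpi^{e+d})$. The sign is absorbed by conjugating with the image $t\in P_{n,r}\cap K^0_{n,m}$ of $\diag{1,\ldots,1,-1}\in\GL_r(\frak o_E)$ under $\hat{\,}$, and a further conjugation by an element of $\widehat{\sW}_{\GL_r}\subset P_{n,r}\cap K^0_{n,m}$ moves the index $r$ to any prescribed $j\in\{1,\ldots,r\}$. Applying $\sigma$ to Lemma \parref{L:representative} then yields the first assertion.

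For the second assertion, the same transport gives $M^d_{n,r}=t^{-1}\sigma(\b M^d_{n,r})t$. By Lemma \parref{L:Levi}, $\b M^d_{n,r}=\widehat\Gamma'_{r,\ell-d}\times K^0_{n-r,e+2d}$, and since $t\in\widehat{\GL}_r$ commutes with the $G_{n-r}$-factor and normalizes the first factor of $\sigma(\b M^d_{n,r})$ (a direct check: conjugation by $\diag{1,\ldots,1,-1}$ carries $\bar a^{-t}$ to $\bar b^{-t}$ with $b=\diag{1,\ldots,1,-1}^{-1}\,a\,\diag{1,\ldots,1,-1}\in\Gamma'_{r,\ell-d}$), one obtains $M^d_{n,r}=\widehat{\{\bar a^{-t}:a\in\Gamma'_{r,\ell-d}\}}\times K^0_{n-r,e+2d}$. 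It then remains to identify $\{\bar a^{-t}:a\in\Gamma'_{r,\ell-d}\}$ with $\Gamma_{r,\ell-d}$, which is a direct Cramer's-rule computation: expanding $\det\bar a$ along the last column gives $\det\bar a\equiv\bar a_{rr}M_{rr}(\bar a)\pmod{\frak p_E^{\ell-d}}$, and combined with the congruence conditions on $\bar a$ this yields $(\bar a^{-t})_{rj}\in\frak p_E^{\ell-d}$ for $j<r$ and $(\bar a^{-t})_{rr}\in 1+\frak p_E^{\ell-d}$; the reverse inclusion follows from the involutivity of $a\mapsto\bar a^{-t}$.

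The main technical step will be the Cramer's-rule identification at the end, which hinges on the unit-determinant hypothesis forcing $\bar a_{rr}M_{rr}(\bar a)\in\frak o_E^\times$; all remaining steps amount to careful bookkeeping with root subgroups, block multiplications, and inner automorphisms attached to $w_0$.
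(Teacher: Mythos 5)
Your proof is correct and follows the same basic strategy as the paper's: conjugate by a Weyl element in $K^0_{n,m}$ that swaps $P_{n,r}$ with $\bar P_{n,r}$, then apply Lemmas \ref{L:representative} and \ref{L:Levi}. The only substantive difference is the choice of Weyl element: the paper conjugates by $w=w_{\{1,\ldots,r\}}(\varpi^e)$, whereas you use $w_0=w_{\cI_n}(\varpi^e)$. The paper's choice is slightly more economical because $w$ fixes $e_j,f_j$ for $j>r$ and hence acts as the identity on the $G_{n-r}$ factor of the Levi, and $w\in R_{n,e}\subset K^0_{n,m}$ is immediate from its block entries; your $w_0$ requires the extra (correct) observation that the induced action on $G_{n-r}$ is conjugation by $w_{\cI_{n-r}}(\varpi^e)\in R_{n-r,e}\subset K^0_{n-r,e+2d}$, and a separate check that $t_\ell^{-1}w_0t_\ell\in K_{n,m}$. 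You also track the sign $\chi_{-\e_r}(-\varpi^{e+d})$ explicitly and absorb it via $\widehat{\diag{1,\ldots,1,-1}}\in P_{n,r}\cap K^0_{n,m}$; the paper states $w\chi_{\e_j}(\varpi^d)w^{-1}=\chi_{-\e_j}(\varpi^{e+d})$ without the sign, which is harmless for the same reason. Finally, your Cramer's-rule verification that $a\mapsto{}^t\bar a^{-1}$ carries $\Gamma'_{r,\ell-d}$ onto $\Gamma_{r,\ell-d}$ is more work than needed — transpose exchanges the last-column condition of $\Gamma'_{r,m}$ with the last-row condition of $\Gamma_{r,m}$, conjugation preserves the ideals, and inversion preserves each since both are groups — but it is a valid way to check closure under inversion.
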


\begin{proof}
Let $S=\{1,2,\ldots, r\}\subseteq\mathcal{I}_n$ and $w=w_{S}(\varpi^e)$ (cf. \S\ref{SSS:Weyl group}).
Then $w\in R_{n,e}\subset K^0_{m,n}$ and we have $w\bar{P}_{n,r}w^{-1}=P_{n,r}$ with 
\[
w\,\diag{a, g_0, a^*}\, w^{-1}=\diag{{}^t\b{a}^{-1}, g_0, J_raJ_r^{-1}}
\] 
for $\diag{a,g_0, a^*}\in M_{n,r}$, where $a\in\GL_r(E)$ and $g_0\in G_{n-r}$. Since the map 
$a\mapsto{}^t\bar{a}^{-1}$ gives an isomorphism from $\Gamma'_{r,m}$ onto $\Gamma_{r,m}$ and we have 
$w\chi_{\e_j}(\varpi^d)w^{-1}=\chi_{-\e_j}(\varpi^{e+d})$ for $1\le j\le r$, the corollary follows immediately
from \lmref{L:representative} and \lmref{L:Levi}.
\end{proof}

\section{Proof of \thmref{T:main}}\label{S:proof of main}
Let us proof \thmref{T:main} in this section. To begin with, we need some preparations.

\subsection{Preliminaries}\label{SS:pre1}
The following lemma describe a property of the local Langlands correspondence for $\GL_r$, which should be well-known 
to the experts. However, since we can't locate a proper reference, we shall provide a proof here. To state the 
lemma, let $\sigma$ be an irreducible representation of $\GL_N(E)$ with the associated $L$-parameter 
$\phi_\sigma:WD_E\to\GL_N(\mathbb{C})$ (cf. \cite{HarrisTaylor2001}, \cite{Henniart2000}, \cite{Scholze2013}).
Define $\pi^c$ to be the representation of $\GL_N(E)$ by $\pi^c(a)=\pi(\b{a})$ for $a\in\GL_N(E)$. On the other hand, 
fix $w_0\in WD_F\setminus WD_F$ and define a representation $\phi_\sigma^c$ of $WD_E$ by 
$\phi_\sigma(w_0ww_0^{-1})$ for $w\in WD_E$. The representation $\phi_\sigma^c$ is independent of the choice of 
$w_0$. Now the lemma can be stated as follow:

\begin{lm}\label{L:Galois conj}
Under the local Langlands correspondence for $\GL_N(E)$, we have $\phi^c_{\sigma}\cong\phi_{\sigma^c}$
\end{lm}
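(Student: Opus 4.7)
The plan is to invoke Henniart's uniqueness characterization of the local Langlands correspondence for $\GL_N(E)$: fixing any nontrivial additive character $\psi$ of $E$, the bijection $\sigma\mapsto\phi_\sigma$ is uniquely determined by compatibility of central characters with local class field theory together with the matching of Rankin--Selberg $\gamma$-factors of pairs,
\[
\gamma(s,\sigma\x\tau,\psi)=\gamma(s,\phi_\sigma\ot\phi_\tau,\psi)
\]
for every irreducible representation $\tau$ of $\GL_r(E)$ with $1\le r<N$. I would therefore reduce the lemma to verifying that $\phi_\sigma^c$ satisfies these two characterizing properties for $\sigma^c$.

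I would proceed by induction on $N$. For the base case $N=1$ the claim follows directly from local class field theory: the Artin reciprocity map $\mathrm{Art}_E\colon E^\x\to W_E^{ab}$ intertwines the Galois action of $c$ on $E^\x$ with conjugation by any fixed lift $w_0\in W_F\setminus W_E$, i.e.\ $\mathrm{Art}_E(\bar a)=w_0\mathrm{Art}_E(a)w_0^{-1}$. Applied to the central character $\omega_\sigma$, this also handles the central-character compatibility required in the inductive step.

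For the inductive step $N\ge 2$, fix an irreducible representation $\tau$ of $\GL_r(E)$ with $r<N$. Applying the LLC $\gamma$-factor identity to the pair $\sigma\x\tau^c$ and substituting $\phi_{\tau^c}\cong\phi_\tau^c$ (valid by the induction hypothesis) gives
\[
\gamma(s,\sigma\x\tau^c,\psi_E)=\gamma(s,\phi_\sigma\ot\phi_\tau^c,\psi_E).
\]
The key step will then be the invariance of $\gamma$-factors of pairs under field automorphisms: for every automorphism $\alpha$ of $E$ fixing the prime field,
\[
\gamma(s,\pi_1^\alpha\x\pi_2^\alpha,\psi\circ\alpha)=\gamma(s,\pi_1\x\pi_2,\psi),
\]
together with its Galois-side analogue. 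Specializing $\alpha=c$ transports the preceding identity to
\[
\gamma(s,\sigma^c\x\tau,\psi_E^c)=\gamma(s,\phi_\sigma^c\ot\phi_\tau,\psi_E^c),
\]
and since $\psi_E^c(x)=\psi_E(\bar x)$ is again a nontrivial additive character of $E$, Henniart's uniqueness applied with respect to $\psi_E^c$ will force $\phi_{\sigma^c}\cong\phi_\sigma^c$, closing the induction.

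The main point that will require care is the invariance of Rankin--Selberg $\gamma$-factors under $c$ on the representation side; on the Galois side it is a standard property of Tate's local constants, while on the representation side it should follow by a change of variables in the defining Jacquet--Piatetski-Shapiro--Shalika integrals, using that a Whittaker functional for $(\pi,\psi)$ also serves as a Whittaker functional for $(\pi^\alpha,\psi\circ\alpha)$ on the common underlying space. Everything else reduces to routine bookkeeping with Henniart's characterization.
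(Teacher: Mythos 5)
Your strategy is essentially the same as the paper's: reduce to a statement about $\gamma$-factors via the base case $N=1$ (local class field theory), establish invariance of Rankin--Selberg $\gamma$-factors on both the representation side (change of variables in the JPSS integrals) and the Galois side (Tate's local constants), and then conclude by a uniqueness/converse-type result. The two main identities you isolate, $\gamma(s,\sigma^c\x\tau^c,\psi^c)=\gamma(s,\sigma\x\tau,\psi)$ and its Galois-side analogue, are exactly the paper's equations in the proof of Lemma~\ref{L:Galois conj}.

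The one place your write-up is looser than the paper's is the final step, where you invoke ``Henniart's uniqueness'' for an arbitrary irreducible $\sigma$. The characterization theorems in the literature (Henniart's characterization via $\epsilon$-factors of pairs, and the local converse theorem in the form used here, cf.\ \cite{JacquetLiu2018}) are stated for supercuspidal, or at best generic, representations; extending them to an arbitrary irreducible $\sigma$ requires either adding compatibility with parabolic induction/Bernstein--Zelevinsky to the list of characterizing axioms, or first reducing the assertion $\phi^c_\sigma\cong\phi_{\sigma^c}$ to the supercuspidal case. The paper does this reduction explicitly as its first step, observing that the decomposition of $\phi_\sigma$ according to the Bernstein--Zelevinsky data of $\sigma$ is compatible with $c$, so only the supercuspidal case needs the $\gamma$-factor argument. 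Your proof would be complete if you inserted this reduction (or an axiom covering it) before appealing to the uniqueness statement; without it, the cited converse theorem does not apply to a non-generic $\sigma$, and ``Henniart's uniqueness'' as you phrased it (central characters plus $\gamma$-factors of pairs alone) is not quite the theorem one finds in the references. A second, minor, point: the Galois-side $\gamma$-factor invariance is easiest to justify when both parameters are irreducible (so that the $L$-factors are trivial and the $\gamma$-factor reduces to an $\epsilon$-factor, as the paper argues); this is another small reason to restrict to supercuspidals first.
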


\begin{proof}
When $N=1$, the assertion follows from the local class field theory (cf. \cite{Tate1979}). For general $N$, the proof
consists of two steps. The first step is to reduce the proof to the case when $\sigma$ is 
supercuspidal. This part is quite straightforward and it follows from the Bernstein-Zelevinsky classification 
(cf. \cite{Zelevinsky1980}) and the explicit local Langlands correspondence modulo the supercuspidal ones 
(cf. \cite[Section 4.2]{Wedhorn2000}). The second step is obviously to prove the lemma when $\sigma$ is supercuspidal.
In particular, $\sigma$ is generic. The proof is by induction on $N$ together with the local converse theorem for $\GL_N$ 
(see \cite{JacquetLiu2018} and the references therein).\\

Let $\sigma$ be an irreducible supercuspidal representation of $\GL_N(E)$ with $N\ge 2$. Let $\sigma'$ be the irreducible 
representation of $\GL_N(E)$ whose $L$-parameter $\phi_{\sigma'}$ is isomorphic to $\phi_\sigma^c$. Note that since 
$\sigma$ is supercuspidal, $\phi_\sigma$ is irreducible. It follows that $\phi_{\sigma}^c$ is also irreducible and hence 
$\sigma'$ is again supercuspidal. Moreover, $\sigma'$ and $\sigma^c$ have the same central character. Indeed, 
under the local Langlands correspondence, the central character $\omega_\sigma$ corresponds to $\det(\phi_\sigma)$, 
and hence character of $\sigma^c$, which is clearly equal to $\omega^c_\sigma$, corresponds to $\det(\phi_\sigma)^c$.
As the central character of $\sigma'$ corresponds to $\det(\phi_\sigma^c)=\det(\phi_\sigma)^c$, the assertion follows. 
Recall that our goal is to show $\sigma'\cong\sigma^c$. So suppose inductively that the assertion holds for every 
irreducible supercuspidal representations $\tau$ of $\GL_r(E)$ with $1\le r\le N-1$. Thus we have 
\begin{equation}\label{E:con 1}
\phi^c_{\tau}\cong\phi_{\tau^c}
\end{equation}
for every such $\tau$.
To apply the local converse theorem, we have to consider the $\gamma$-factor
$\gamma(s,\sigma\x\tau,\psi)$ attached to $\sigma$, $\tau$ and a non-trivial additive character $\psi$ of $E$ defined by 
the Rankin-Selberg integrals for $\GL_N\x\GL_r$ (cf. \cite{JPSS1983}). Using these local integrals, one can check directly
that 
\begin{equation}\label{E:con 2}
\gamma(s,\sigma^c\x\tau^c,\psi^c)=\gamma(s,\sigma\x\tau,\psi)
\end{equation}
where $\psi^c$ is the character of $E$ defined by $\psi^c(x)=\psi(\b{x})$ for $x\in E$. On the other hand, we also have 
\begin{equation}\label{E:con 3}
\gamma(s,\phi^c_\sigma\ot\phi^c_\tau, \psi^c)=\gamma(s,\phi_\sigma\ot\phi_\tau,\psi).
\end{equation}
To verify this, first notice that 
\[
\gamma(s,\phi^c_\sigma\ot\phi^c_\tau, \psi^c)
=
\gamma(s,\sigma'\x\tau^c, \psi^c)
=
\e(s,\sigma'\x\tau^c,\psi^c)
=
\e(s,\phi_\sigma^c\ot\phi^c_\tau,\psi^c)
\]
as both of the $L$-factors appeared in the $\gamma(s,\sigma'\x\tau^c,\psi^c)$ are equal to $1$ 
(cf. \cite[Proposition 8.1]{JPSS1983}). Here we also use the fact that these local factors are preserved under the local 
Langlands correspondence. Since same reasoning shows 
$\gamma(s,\phi_\sigma\ot\phi_{\tau},\psi)=\e(s,\phi_\sigma\ot\phi_\tau,\psi)$, we are reducing to show 
\[
\e(s,\phi_\sigma^c\ot\phi^c_\tau,\psi^c)
=
\e(s,\phi_\sigma\ot\phi_\tau,\psi).
\]
But this is a consequence of the following identity: 
\begin{equation}\label{E:con 4}
\e(s,\phi^c,\psi^c)=\e(s,\phi,\psi)
\end{equation}
for every admissible representations $\phi$ of $WD_E$, whose validity can be verified by Tate's integral formula 
(cf. \cite{Tate1979}) and the inductivity of the $\e$-factors.\\
 
Now we can complete the proof. In fact, by \eqref{E:con 1}, \eqref{E:con 2} and \eqref{E:con 3}, we have
\begin{align*}
\gamma(s,\sigma^c\x\tau,\psi)
=
\gamma(s,\sigma\x\tau^c,\psi^c)
=
\gamma(s,\phi_\sigma\x\phi_{\tau^c},\psi^c)
=
\gamma(s,\phi_\sigma\x\phi^c_\tau,\psi^c)
=
\gamma(s,\phi_\sigma^c\x\phi_\tau,\psi)
=
\gamma(s,\sigma'\x\tau,\psi)
\end{align*}
for every supercuspidal representations $\tau$ of $\GL_r(E)$ with $1\le r\le N-1$
\footnote{Actually, $\lfloor\frac{N}{2}\rfloor$ is enough.}. 
Since $\sigma^c$ and $\sigma'$ have the same central, we conclude that $\sigma^c\cong\sigma'$ by the local converse 
theorem for $\GL_N$ (cf. \cite{JacquetLiu2018}).
\end{proof}

\lmref{L:Galois conj} has two corollaries. To state the first one, let $\sigma$ be an irreducible representation of 
$\GL_N(E)$ with the associated $L$-parameter $\phi_\sigma$ as before. Since $\psi_E$ unramified, the $\e$-factor 
$\e(s,\phi_\sigma,\psi_E)$ can be written as
\[
\e(1/2,\phi_\sigma,\psi_E)q^{-a_\sigma(2s-1)}
\]
for some integer $a_\sigma\ge 0$ and non-zero complex number $\e(1/2,\phi_\sigma,\psi_E)$. 
Denote by $\tilde{\sigma}$ the contragredient of $\sigma$. Then we have:

\begin{cor}\label{C:epsilon}
Let $\sigma$ be an irreducible representation of ${\rm GL}_N(E)$. Then $a_{\tilde{\sigma}^c}=a_\sigma$. 
\end{cor}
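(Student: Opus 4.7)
The plan is to interpret $a_\sigma$ as the Artin conductor of $\phi_\sigma$ and then show that the three operations relating $\phi_\sigma$ to $\phi_{\tilde{\sigma}^c}$, namely the Langlands dual, Galois conjugation, and the comparison $\phi_{\tau^c}\cong\phi_\tau^c$ (Lemma~\ref{L:Galois conj}), all preserve this conductor. Since $\psi_E$ is unramified and $q_E=q^2$, the defining equation $\epsilon(s,\phi_\sigma,\psi_E)=\epsilon(1/2,\phi_\sigma,\psi_E)\,q^{-a_\sigma(2s-1)}$ can be rewritten as $q_E^{-a_\sigma(s-1/2)}$, so that $a_\sigma$ equals the Artin conductor $a(\phi_\sigma)$ of the Weil--Deligne representation $\phi_\sigma$.

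The first step is to identify $\phi_{\tilde\sigma^c}$. Applying Lemma~\ref{L:Galois conj} with $\sigma$ replaced by $\tilde\sigma$ gives $\phi_{\tilde\sigma^c}\cong\phi_{\tilde\sigma}^c$. A standard property of the local Langlands correspondence for $\GL_N$ (compatibility with contragredients) then yields $\phi_{\tilde\sigma}\cong\phi_\sigma^\vee$, so that
\[
\phi_{\tilde\sigma^c}\cong(\phi_\sigma^\vee)^c\cong(\phi_\sigma^c)^\vee,
\]
the last isomorphism being immediate from the definitions (dualization and the conjugation $\phi\mapsto\phi^c$ commute).

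The second step is to check that the Artin conductor is stable under both of the operations $\phi\mapsto\phi^\vee$ and $\phi\mapsto\phi^c$. Preservation under the dual is classical, since both operations at the level of $\epsilon$-factors only change the root number by a factor with no $s$-dependence: one has $\epsilon(s,\phi^\vee,\psi)=\det(\phi)(-1)^{-1}\epsilon(s,\phi,\psi)\cdot(\text{same }q_E\text{-exponent})$, forcing $a(\phi^\vee)=a(\phi)$. For the conjugation $\phi\mapsto\phi^c$, I would use the identity \eqref{E:con 4} from the proof of Lemma~\ref{L:Galois conj} together with the transformation rule $\epsilon(s,\phi,\psi_a)=\det(\phi)(a)|a|^{N(s-1/2)}\epsilon(s,\phi,\psi)$. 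A direct computation using $\bar\delta=-\delta$ shows $\psi_E^c=\bar\psi_E$, so $\epsilon(s,\phi^c,\psi_E)$ and $\epsilon(s,\phi,\psi_E)$ differ only by a nonzero scalar (independent of $s$), whence $a(\phi^c)=a(\phi)$. Alternatively, since $E/F$ is unramified, conjugation by any $w_0\in W_F\setminus W_E$ preserves the inertia subgroup $I_E=I_F$ together with its upper-numbering ramification filtration, so it preserves the Artin conductor tautologically.

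Combining the two steps,
\[
a_{\tilde\sigma^c}=a(\phi_{\tilde\sigma^c})=a\bigl((\phi_\sigma^c)^\vee\bigr)=a(\phi_\sigma^c)=a(\phi_\sigma)=a_\sigma,
\]
which is the claim. The only point that requires genuine care is the invariance $a(\phi^c)=a(\phi)$; I would present the ramification-filtration argument as it is cleaner and avoids having to track the auxiliary scalar coming from $\psi_E^c=\bar\psi_E$.
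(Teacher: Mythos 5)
Your proof is correct, and its skeleton is the same as the paper's: both factor the passage from $\phi_\sigma$ to $\phi_{\tilde\sigma^c}$ into the contragredient and the Galois conjugation $\phi\mapsto\phi^c$, invoking Lemma~\ref{L:Galois conj} to identify $\phi_{\tilde\sigma^c}$ with $\phi_{\tilde\sigma}^c$, and then show both operations leave the conductor exponent unchanged. Where you diverge is in \emph{how} you establish $a(\phi^c)=a(\phi)$. The paper stays inside the $\epsilon$-factor calculus already set up: it uses the identity $\epsilon(s,\phi^c,\psi^c)=\epsilon(s,\phi,\psi)$ from \eqref{E:con 4}, combined with the observation that $\psi_E^c=\psi_E^{-1}$ (because $\psi_E$ is trivial on $F$) is still unramified, so the two sides have the same $q_E^{-s}$ exponent. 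You instead read $a_\sigma$ as the Artin conductor of $\phi_\sigma$ and argue structurally: since $E/F$ is unramified, $I_E=I_F$ carries the same upper-numbering filtration, the ramification subgroups are normal in $W_F$, hence conjugation by any $w_0\in W_F\smallsetminus W_E$ fixes the filtration and therefore the conductor. This is slicker and more conceptual, and it avoids tracking the scalar coming from the change of additive character; the trade-off is that it steps outside the paper's self-contained $\epsilon$-factor framework and invokes the definition of the Artin conductor directly. Both are sound. One small cosmetic point: your displayed "identity" $\epsilon(s,\phi^\vee,\psi)=\det\phi(-1)^{-1}\epsilon(s,\phi,\psi)\cdot(\text{same exponent})$ is not quite a standard formula as written; the cleaner way to phrase it (and what the paper does) is to apply Tate's functional equation $\epsilon(s,\phi,\psi)\epsilon(1-s,\phi^\vee,\psi^{-1})=1$ and substitute $s\mapsto 1-s$, noting $\psi^{-1}$ is still unramified, which yields $a(\phi^\vee)=a(\phi)$ with no loose ends.
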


\begin{proof}
Since $\psi_E$ is trivial on $F$, one has $\psi_E^c=\psi_E^{-1}$. In particular, $\psi_E^c$ is again unramified, and 
hence $a_{\tilde{\sigma}^c}$ also appears in the exponent of $\e(s,\phi_{\tilde{\sigma}^c},\psi_E^c)$. 
Since
\[
\e(s,\phi_{\tilde{\sigma}^c},\psi_E^c)=\e(s,\phi^c_{\tilde{\sigma}},\psi_E^c)=\e(s,\phi_{\tilde{\sigma}},\psi_E)
\]
by \lmref{L:Galois conj} and \eqref{E:con 4}, we see that $a_{\tilde{\sigma}^c}=a_{\tilde{\sigma}}$. 
Next, since $\phi_{\tilde{\sigma}}\cong\tilde{\phi}_\sigma$, where
$\tilde{\phi}_\sigma$ is the contragredient representation of $\phi_\sigma$ 
and
\[
\e(s,\phi_\sigma,\psi_E)\e(1-s,\tilde{\phi}_\sigma,\psi_E^{-1})=1
\]
by \cite[Equation (3.4.7)]{Tate1979}, we conclude that $a_{\tilde{\sigma}}=a_\sigma$. This completes the proof.
\end{proof}

\begin{cor}\label{C:L-factor}
Let $\phi:WD_E\to\GL_N(\bbC)$ be an $L$-parameter. If $\t{\phi}\cong\phi^c$, then $L(s,\t{\phi})=L(s,\phi)$.
\end{cor}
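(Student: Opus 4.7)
The plan is to prove the stronger identity $L(s,\phi^c) = L(s,\phi)$ for every admissible representation $\phi$ of $WD_E$. Granting this, the corollary follows at once from the hypothesis $\tilde{\phi} \cong \phi^c$, since $L$-factors depend only on the isomorphism class:
\[
L(s,\tilde{\phi}) = L(s,\phi^c) = L(s,\phi).
\]

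To prove $L(s,\phi^c) = L(s,\phi)$, I would unfold the definition. Recall
\[
L(s,\phi) = \det\!\left(1 - q_E^{-s}\phi(\Frob_E) \,\bigm|\, V^{I_E,\,N=0}\right)^{-1},
\]
where $V^{I_E,\,N=0}$ is the subspace of the underlying space $V = \mathbb{C}^N$ fixed by inertia and annihilated by the Deligne monodromy $N$. By definition $\phi^c(w) = \phi(w_0 w w_0^{-1})$, while the nilpotent $N$ is unchanged, because the outer conjugation by $w_0 \in W_F$ acts only on the $W_E$-factor of $W_E \times SL_2(\mathbb{C})$.

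Two elementary observations then suffice. First, since $E/F$ is unramified, $I_E = I_F$, and inertia is normal in $W_F$, so $w_0 I_E w_0^{-1} = I_E$; hence the inertia-fixed subspace $V^{I_E}$ is literally the same for $\phi^c$ as for $\phi$. Second, the quotient $W_F/I_F$ is pro-cyclic, hence abelian, so $w_0 \Frob_E w_0^{-1}$ and $\Frob_E$ represent the same class modulo $I_F = I_E$; that is, $w_0 \Frob_E w_0^{-1} = \Frob_E \cdot i$ for some $i \in I_E$. Since $\phi(i)$ acts as the identity on $V^{I_E}$, the operators $\phi^c(\Frob_E)$ and $\phi(\Frob_E)$ coincide on this subspace. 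Combined with $\ker N^c = \ker N$, the characteristic polynomials defining $L(s,\phi)$ and $L(s,\phi^c)$ are identical.

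The argument is essentially bookkeeping; the only conceptual input is the unramifiedness of $E/F$, which simultaneously identifies $I_E$ with $I_F$ and renders $\Frob_E$ central in $W_F$ modulo inertia. I do not foresee any significant obstacle.
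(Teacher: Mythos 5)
Your proof is correct, and it takes a genuinely different route from the paper. The paper transports the problem across the local Langlands correspondence: picking the irreducible representation $\sigma$ of $\GL_N(E)$ attached to $\phi$, it invokes the earlier Lemma on Galois conjugation (which identifies $\phi_\sigma^c$ with $\phi_{\sigma^c}$ and was itself proved by reduction to supercuspidals plus the local converse theorem) to write $L(s,\tilde\phi)=L(s,\tilde\sigma)$ and $L(s,\phi^c)=L(s,\sigma^c)$, and then reads off $L(s,\sigma^c)=L(s,\sigma)$ directly from the Rankin–Selberg integral. You instead prove the stronger statement $L(s,\phi^c)=L(s,\phi)$ entirely on the Weil–Deligne side, using only that $E/F$ unramified gives $I_E=I_F$ (so the $I_E$-invariants and $\ker N$ are literally the same subspace for $\phi$ and $\phi^c$) together with commutativity of $W_F/I_F$ (so $w_0\,\Frob_E\,w_0^{-1}$ differs from $\Frob_E$ by an element of inertia, hence the two operators agree on $V^{I_E,N=0}$). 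Your argument is more elementary and self-contained: it avoids the local Langlands correspondence and the Galois-conjugation lemma altogether. Within the paper's framework that lemma is already available, so the authors' deduction is very short given what precedes it, but your direct Galois-theoretic computation is arguably the cleaner proof of the corollary in isolation, and it makes explicit exactly where unramifiedness of $E/F$ enters.
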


\begin{proof}
Let $\sigma$ be an irreducible representation of $\GL_N(E)$ corresponds to $\phi$ under the local Langlands 
correspondence. Then the assumption on $\phi$ and \lmref{L:Galois conj} imply 
\[
L(s,\t{\sigma})=L(s,\t{\phi})=L(s,\phi^c)=L(s,\sigma^c)
\]
where the first and the last $L$-factors are defined by the Rankin-Selberg integrals in \cite{JPSS1983}. Thus it suffices 
to show that $L(s,\sigma^c)=L(s,\sigma)$. But this follows immediately from the definition of the integrals.
\end{proof}

To state the next lemma, let $\tilde{J}_N\in\GL_N(E)$ be the element defined inductively by 
\[
\tilde{J}_1=(1)
\quad\text{and}\quad
\tilde{J}_{N}
=
\begin{pmatrix}
&\tilde{J}_{N-1}\\
(-1)^{N-1}&
\end{pmatrix}
\]
and $\theta: {\rm GL}_{2n+1}(E)\to{\rm GL}_{2n+1}(E)$ be the involution given by 
\begin{equation}\label{E:inv}
a^\theta=\tilde{J}_{2n+1}{}^t\bar{a}^{-1}\tilde{J}_{2n+1}^{-1}.
\end{equation}
Let $\tilde{K}_{2n+1,m}\subset {\rm GL}_{2n+1}(E)$ to be the open compact subgroup defined by 
\begin{equation}\label{E:tilde K}
\tilde{K}_{2n+1,m}=\omega_m\Gamma_{2n+1,m}\omega_m^{-1}
\quad\text{where}\quad
\omega_m
:=
\begin{pmatrix}
I_n&&\\
&&1\\
&\varpi^m I_n
\end{pmatrix}
\in
\GL_{2n+1}(E).
\end{equation}
Then $\tilde{K}_{2n+1,m}$ consists of matrices $k$ of the form
\[
\bordermatrix{
              & n     & 1    & n     \cr
    n     & \mathfrak{o}_E &\mathfrak{o}_E &\mathfrak{p}_E^{-m}     \cr
    1     &\mathfrak{p}^m_E& 1+\mathfrak{p}^m_E &\mathfrak{o}_E\cr
    n     &\mathfrak{p}^m_E&\mathfrak{p}^m_E& \mathfrak{o}_E   \cr
            }
\] 
with $\det(k)\in\frak{o}_E^{\x}$, and is invariant under $\theta$.
Let $\mathit{\Pi}$ be an irreducible generic representation of ${\rm GL}_{2n+1}(E)$ and define $\itPi^\theta$ to be an 
irreducible generic representation of ${\rm GL}_{2n+1}(E)$ acting on $\cV_\sigma$ with the action 
$\itPi^\theta(a)=\itPi(a^\theta)$. We assume that $\itPi\cong\itPi^\theta$. 
Since $(\itPi^\theta)^\theta=\itPi$, there exists an intertwining map 
$I:\itPi\overset{\sim}{\longrightarrow}\itPi^\theta$ with $I^2={\rm id}$. This $I$ is unique up to $\pm 1$ and can be 
normalized in the following way. On one hand, since $\tilde{K}_{2n+1,m}$ is $\theta$-invariant, $I$ preserves the space 
$\cV_\itPi^{\tilde{K}_{2n+1}, m}$ for every $m\ge 0$. On the other, \eqref{E:tilde K} and the theory of local newforms 
for generic representations of ${\rm GL}_N$ (\cite{JPSS1981}, \cite{Jacquet2012}) imply
\[
{\rm dim}_{\mathbb{C}}\cV_\itPi^{\tilde{K}_{2n+1}, a_\itPi}
=
1.
\]
Thus we can require that $I$ is the identity map on the space $\cV_\itPi^{\tilde{K}_{2n+1}, a_\itPi}$. 
In particular, the trace of $I$ on the space $\cV_\itPi^{\tilde{K}_{2n+1}, a_\itPi}$ is $1$. The next
lemma computes the trace of $I$ on the space $\cV_\itPi^{\tilde{K}_{2n+1}, m}$ for every $m\ge 0$.

\begin{lm}\label{L:trace}
Let notation be as above. We have 
\[
{\rm tr}(I; \cV_\itPi^{\tilde{K}_{2n+1},m})
=
\begin{pmatrix}
\lfloor\frac{m-a_\itPi}{2}\rfloor+n\\
n
\end{pmatrix}
\]
for every $m\ge 0$.
\end{lm}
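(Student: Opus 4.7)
My plan is to exhibit an explicit basis of $\cV_\itPi^{\tilde{K}_{2n+1,m}}$ on which the involution $I$ acts as a signed permutation, and then to count the signed fixed points. First I will use the conjugation $\tilde{K}_{2n+1,m}=\omega_m\Gamma_{2n+1,m}\omega_m^{-1}$ from \eqref{E:tilde K} to identify $\cV_\itPi^{\tilde{K}_{2n+1,m}}$ with $\cV_\itPi^{\Gamma_{2n+1,m}}$ via $v\mapsto\itPi(\omega_m)v$. Since $I$ intertwines $\itPi$ with $\itPi\circ\theta$ and $\tilde{K}_{2n+1,m}$ is $\theta$-stable, the transported involution takes the form $I':=\itPi(\omega_m^{-1}\omega_m^\theta)\circ I$, where the correction factor $\omega_m^{-1}\omega_m^\theta$ can be written down explicitly from the definitions. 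It suffices to compute the trace of $I'$ on $\cV_\itPi^{\Gamma_{2n+1,m}}$.

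Next I will invoke the Jacquet--Piatetski-Shapiro--Shalika newform theory for $\GL_{2n+1}(E)$ together with Reeder's construction of oldforms in \cite{Reeder1991}. This produces an explicit basis of $\cV_\itPi^{\Gamma_{2n+1,m}}$ indexed by tuples $\underline{d}=(d_0,d_1,\ldots,d_{2n})$ of non-negative integers with $\sum_i d_i=m-a_\itPi$; the newform corresponds to $\underline{d}=\underline{0}$, and each basis element is obtained from the newform by an appropriate product of level raising operators. The count of such tuples is $\binom{m-a_\itPi+2n}{2n}$, matching the known dimension formula.

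The heart of the argument is to show that $I'$ acts on Reeder's basis by the order-reversing permutation $\underline{d}\mapsto\underline{d}^{\mathrm{rev}}:=(d_{2n},\ldots,d_0)$, with all signs equal to $+1$. The expected mechanism is that the Chevalley--Galois involution $\theta$ exchanges the $i$-th and the $(2n-i)$-th level raising operator, and the normalization $I(v_{\mathrm{new}})=v_{\mathrm{new}}$ pins down the remaining scalar. Granted this, the trace of $I'$ equals the number of palindromic tuples with $d_i=d_{2n-i}$ for all $i$. Such a tuple is determined by $(d_0,\ldots,d_n)$ subject to $2(d_0+\cdots+d_{n-1})+d_n=m-a_\itPi$; summing over the allowed values of $d_n$ and applying the hockey stick identity yields
\[
\sum_{j=0}^{\lfloor(m-a_\itPi)/2\rfloor}\binom{j+n-1}{n-1}=\binom{\lfloor(m-a_\itPi)/2\rfloor+n}{n},
\]
which is the asserted formula.

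The main obstacle will be the precise verification that $\theta$ acts on Reeder's level raising operators as the order-reversing symmetry without introducing signs, taking into account the correction $\itPi(\omega_m^{-1}\omega_m^\theta)$ and the sign pattern built into $\tilde{J}_{2n+1}$. Realizing the intertwiner $I$ concretely on the Whittaker model of $\itPi$ should reduce the required identity to a combinatorial claim about Reeder's Hecke-type operators.
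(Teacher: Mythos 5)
Your plan follows essentially the same route as the paper: conjugate by $\omega_m$ to replace $\tilde{K}_{2n+1,m}$ by $\Gamma_{2n+1,m}$ and $I$ by $I_m=\itPi(\omega_m^{-1})\circ I\circ\itPi(\omega_m)$ (which agrees with your $I'$), expand in Reeder's basis of monomials in the level-raising operators $f_0,\ldots,f_{2n}$, show that $I_m$ reverses the index tuple, and count palindromes. One small imprecision to watch: $I_m$ does not act as a genuine signed permutation of the basis. The paper's key computation gives $I_{m+1}(f\star v)=f^{\iota}\star I_m(v)$ with $f_i^{\iota}=q_E^{\,n-i}f_{2n-i}$, so a basis vector indexed by $(\ell_0,\ldots,\ell_{2n})$ is sent to $q_E^{\sum_i \ell_i(n-i)}$ times the basis vector indexed by the reversed tuple; the exponent $\sum_i\ell_i(n-i)$ is nonzero off the palindromic locus, but it vanishes when $\ell_i=\ell_{2n-i}$ for all $i$, which is exactly what makes the diagonal entries equal to $1$ and the trace equal to the number of palindromic tuples. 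The step you flag as the main obstacle (compatibility of $\theta$ with the level-raising operators, after correcting by $\omega_m^{-1}\omega_m^{\theta}$) is therefore handled in the paper by a direct integral manipulation in the spherical Hecke algebra of $\GL_{2n}(E)$, without passing to the Whittaker model; that would also work, but the Hecke-algebra computation is cleaner because the involution $\theta$ twists the generic character, which would require extra bookkeeping on the Whittaker side.
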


\begin{proof}
Let $I_m:\cV_\itPi\overset{\sim}{\longto}\cV_\itPi$ be the $\mathbb{C}$-linear isomorphism defined by 
$I_m=\itPi(\omega_m^{-1})\circ I\circ\itPi(\omega_m)$. Then from \eqref{E:tilde K} and the fact that 
and $I$ preserves $\cV_\itPi^{\tilde{K}_{2n+1,m}}$, we see that $I_m$ preserves 
$\cV_\itPi^{\Gamma_{2n+1,m}}$ and
\[
{\rm tr}(I;\cV_\itPi^{\tilde{K}_{2n+1},m})
=
{\rm tr}(I_m;\cV_\itPi^{\Gamma_{2n+1,m}}).
\]
So it suffices to compute the trace of $I_m$ on $\cV_\itPi^{\Gamma_{2n+1,m}}$. To do so, we first recall the results 
of Jacquet--Piatetski-Shapiro--Shalika (\cite{JPSS1981}, see also \cite{Jacquet2012}) and Reeder (\cite{Reeder1991}).
Let $\cH=\cH({\rm GL}_{2n}(E)//{\rm GL}_{2n}(\frak{o}_E))$ be the spherical Hecke algebra of ${\rm GL}_{2n}(F)$. 
We embed ${\rm GL}_{2n}(E)$ into ${\rm GL}_{2n+1}(E)$ via $a\mapsto\pMX{a}{}{}{1}$ and define the action of 
$\cH$ on $\cV_\itPi^{{\rm GL}_{2n}(\frak{o}_E)}$ by 
\[
f\star v
=
\int_{{\rm GL}_{2n}(E)}f(a)\itPi\left(\pMX{a^{-1}}{}{}{1}\right)v|\det(a)|_E^{\frac{1}{2}}da
\quad 
\]
for $f\in\cH$ and $v\in\cV_\itPi^{{\rm GL}_{2n}(\frak{o}_E)}$. Here the Haar measure $da$ on $\GL_{2n}(E)$ is 
chosen so that ${\rm vol}({\rm GL}_{2n}(\frak{o}_E), da)=1$. Note that 
$\cV_\itPi^{\Gamma_{2n+1,m}}\subseteq\cV_\itPi^{{\rm GL}_{2n}(\frak{o}_E)}$ for every $m\ge 0$ and we have 
the identity
\begin{equation}\label{E:Hecke action}
(f\star f')\star v=f\star(f'\star v)
\end{equation}
for every $f, f'\in\cH$ and $v\in\cV_\sigma^{{\rm GL}_{2n}(\frak{o}_E)}$. Indeed, a direct computation shows 
$(f\star f')\star v=f'\star(f\star v)$ for every $f, f'\in\cH$ and $v\in\cV_\sigma^{{\rm GL}_{2n}(\frak{o}_E)}$. But since 
$\cH$ is abelian, the identity \eqref{E:Hecke action} follows.\\ 

From a result of \cite{JPSS1981}, we know that $\cV_\itPi^{\Gamma_{2n+1,m}}=0$ for $0\le m<a_\itPi$ and 
$\cV_\itPi^{\Gamma_{2n+1,m}}\neq 0$ for all $m\ge a_\itPi$ with
${\rm dim}_{\mathbb{C}}\cV_\itPi^{\Gamma_{2n+1,a_\itPi}}=1$. Fix a non-zero element 
$v_0\in\cV_\itPi^{\Gamma_{2n+1,a_\itPi}}$. Then our normalization on $I$ implies
\begin{equation}\label{E:normalization}
I_{a_\itPi}(v_0)=v_0.
\end{equation}
To describe a basis of $\cV_\itPi^{\Gamma_{2n+1,m}}$ for $m> a_\itPi$, 
we recall a result of Reeder (\cite{Reeder1991}). For $a\in{\rm GL}_{2n}(E)$, we denote 
\[
[a]
=
{\rm GL}_{2n}(\frak{o}_E)a{\rm GL}_{2n}(\frak{o}_E).
\]
For $0\le i\le 2n$, we write
\[
\ul{\varpi}_i
=
{\rm diag}(\underbrace{\varpi,\ldots,\varpi}_{i},\underbrace{1,\ldots, 1}_{2n-i})\in{\rm GL}_{2n}(E)
\]
and put 
\begin{equation}\label{E:basis}
f_i=q_E^{i(2n-i)/2}\mathbb{I}_{[\ul{\varpi}_i]}.
\end{equation}
Then
\begin{equation}\label{E:basis GL}
\beta_\ell
:=
\stt{f_0^{\ell_0}f_1^{\ell_1}\cdots f_{2n}^{\ell_{2n}}\star v_0\mid\text{$\ell_i\ge 0$ for $0\le i\le 2n$ and
$\ell_0+\ell_1+\cdots+\ell_{2n}=\ell$}}
\end{equation}
is a basis of $\cV_\itPi^{\Gamma_{2n+1,a_\itPi+\ell}}$ for every $\ell\ge 0$.\\

To proceed, we prove the following identity, which is the key to the proof of this lemma. Let $f\in\cH$,
$v\in\cV_\sigma^{\Gamma_{2n+1,m}}$ and suppose that $f\star v\in\cV_\sigma^{\Gamma_{2n+1,m+1}}$. Then 
we have 
\begin{equation}\label{E:key id}
I_{m+1}(f\star v)
=
f^\iota\star I_m(v)
\end{equation}
with $f^\iota\in\cH$ defined by
\[
f^\iota(a)
=
q_E^{-n}f(\varpi{}^t\bar{a}^{-1})|\det(a)|^{-1}
\]
for $a\in{\rm GL}_{2n}(E)$, where we view $\varpi$ as an element in the center of ${\rm GL}_{2n}(E)$.
The proof of this identity is by a straightforward computation:
\begin{align*}
I_{m+1}(f\star v)
&=
\int_{{\rm GL}_{2n}(E)}
f(a) (I_{m+1}\circ\itPi)\left(\pMX{a^{-1}}{}{}{1}\right)v|\det(a)|^{\frac{1}{2}}da\\
&=
\int_{{\rm GL}_{2n}(E)}
f(a)(\itPi(\omega_{m+1}^{-1})\circ I\circ\itPi)\left(\omega_{m+1}\pMX{a^{-1}}{}{}{1}\omega_m^{-1}\right)
\itPi(\omega_m)v|\det(a)|^{\frac{1}{2}}da\\
&=
\int_{{\rm GL}_{2n}(E)}
f(a)\itPi\left(\omega_{m+1}^{-1}\omega^\theta_{m+1}\pMX{a^{-1}}{}{}{1}^\theta(\omega^{-1}_m)^\theta\right)
I(\itPi(\omega_m)v)|\det(a)|^{\frac{1}{2}}da\\
&=
\int_{{\rm GL}_{2n}(E)}
f(a)\itPi\left(\omega_{m+1}^{-1}\omega^\theta_{m+1}\pMX{a^{-1}}{}{}{1}^\theta(\omega^{-1}_m)^\theta\omega_m\right)
I_m(v)|\det(a)|^{\frac{1}{2}}da\\
&=
\int_{{\rm GL}_{2n}(E)}
f(a)\itPi\left(
\pMX{\varpi^{-1}J{}^t\bar{a}J^{-1}}{}{}{1}\right)I_m(v)|\det(a)|^{\frac{1}{2}}da\\
&=
\int_{{\rm GL}_{2n}(E)}
f^\iota(a)\itPi\left(
\pMX{a^{-1}}{}{}{1}\right)I_m(v)
|\det(a)|^{\frac{1}{2}}db\\
&=
f^\iota\star I_m(v)
\end{align*}
where $J:=\pMX{I_n}{}{}{-I_n}\tilde{J}_{2n}\in{\rm GL}_{2n}(\frak{o}_E)$. This proves \eqref{E:key id}.\\

Recall that $f_i\in\cH$ is given by \eqref{E:basis} for $0\le i\le 2n$. One can check directly that 
\begin{equation}\label{E:I on f_i}
f^\iota_i
=
q_E^{n-i}f_{2n-i}.
\end{equation}
Now we are ready to compute ${\rm tr}(I_m;\cV_\itPi^{\Gamma_{2n+1,m}})$. We may assume $m\ge a_\itPi$
since otherwise the space $\cV_\itPi^{\Gamma_{2n+1,m}}$ is zero and so is the trace. Let $m=a_\itPi+\ell$ for 
some $\ell\ge 0$. To obtain the trace, we compute the matrix of $I_m$ with respect to the basis \eqref{E:basis GL}.
Let $v=f_0^{\ell_0}f_1^{\ell_1}\cdots f_{2n}^{\ell_{2n}}\star v_0\in\beta_\ell$ so that $\ell_0+\ell_1+\cdots+\ell_{2n}=\ell$. 
Then we have 
\[
I_m(v)
=
I_{a_\itPi+\ell}(v)
=
q_E^{\sum_{i=0}^{2n}\ell_i(n-i)}f_0^{\ell_{2n}}f_1^{\ell_{2n-1}}\cdots f_{2n}^{\ell_0}\star v_0
\]
by \eqref{E:Hecke action}, \eqref{E:normalization}, \eqref{E:key id} and \eqref{E:I on f_i}. From this we see that 
$I_{a_\itPi+\ell}(v)$ is constant multiple of an element in $\beta_\ell$, and moreover,  
\[
I_{a_\itPi+\ell}(v)\in\mathbb{C}v 
\]
if and only if $\ell_i=\ell_{2n-i}$ for $0\le i\le 2n$, in which case we have $I_{a_\itPi+\ell}(v)=v$.
It follows that 
\begin{align*}
{\rm tr}\left(I_{a_\itPi+\ell}; \cV_{\itPi}^{\Gamma_{2n+1,a_\itPi+\ell}}\right)
&=
\left|\stt{(\ell_0,\ldots,\ell_{2n})\in\mathbb{Z}_{\ge 0}^{2n+1}\mid\text{$\ell_i=\ell_{2n-i}$ for $0\le i\le 2n$ and 
$\ell_0+\cdots +\ell_{2n}=\ell$}}\right|\\
&=
\left|\stt{(\ell_0,\ldots,\ell_{n})\in\mathbb{Z}_{\ge 0}^{n+1}\mid 2\ell_0+\cdots +2\ell_{n-1}+\ell_n=\ell}\right|\\
&=
\left|\stt{(\ell_0,\ldots,\ell_{n-1})\in\mathbb{Z}_{\ge 0}^{n}\mid \ell_0+\cdots +\ell_{n-1}\le\lfloor\ell/2\rfloor}\right|\\
&=
\begin{pmatrix}
\lfloor\frac{\ell}{2}\rfloor+n\\
n
\end{pmatrix}.
\end{align*}
This completes the proof.
\end{proof}

\subsection{Proof of \thmref{T:main}}
In the following proof, we will retain the notation in the previous subsection. 
Let $\pi$ be an irreducible generic representation of $G_n$ with the associated $L$-parameter 
$\phi_\pi: WD_E\to{\rm GL}_{2n+1}(\mathbb{C})$ which is conjugate orthogonal 
(cf. \cite[Section 8]{GanGrossPrasad2012}, \cite{Mok2015}). 
Then by the local Langlands correspondence for $\GL_{2n+1}$, $\phi_\pi$ 
corresponds to an irreducible representation $\itPi$ of $\GL_{2n+1}(E)$. This $\itPi$ is conjugate self-dual and 
therefore $\itPi\cong\itPi^\theta$. Assume first that $\pi$ is tempered. Then $\itPi$ is also tempered and hence 
generic. Let $I:\itPi\overset{\sim}{\longto}\itPi^\theta$ be the normalized intertwining map as in the paragraph before 
\lmref{L:trace}. Then the proofs of \cite[Theorems 4.3, 4.4]{AtobeOiYasuda} imply
\[
{\rm dim}_{\mathbb{C}}\cV_\pi^{K_{n,m}}={\rm tr}(I; \cV_\itPi^{\tilde{K}_{2n+1,m}})
\]
for every $m\ge 0$. Now the desired identity for tempered $\pi$ follows from \lmref{L:trace}.\\

Suppose that $\pi$ is non-tempered from now on. Then by the Langlands' classification 
(\cite{Silberger1978}) and standard module conjecture (\cite{CasselmanShahidi1998}, \cite{Muic2001}), 
\begin{equation}\label{E:pi}
\pi
\cong
\tau_1\x\cdots\x\tau_k\rtimes\pi_0
:=
{\rm Ind}_{P_{n,\ul{r}}}^{G_n}(\tau_1\boxtimes\cdots\boxtimes\tau_k\boxtimes\pi_0)
\quad
(\text{normalized induced})
\end{equation}
where $1\le r\le n$ is an integer, $\ul{r}=(r_1,\ldots, r_k)$ is a partition of $r$, $P_{n,\underline{r}}\subset G_n$ is the 
parabolic subgroup containing $B_n$ whose Levi subgroup isomorphic to 
${\rm GL}_{r_1}\x\cdots\x{\rm GL}_{r_k}\x G_{n-r}$, $\tau_j$ is an irreducible essentially square integrable 
representations of ${\rm GL}_{r_j}$ for $1\le j\le k$, and $\pi_0$ is an irreducible tempered generic representation of 
$G_{n-r}$. The associated $L$-parameter then decompose accordingly 
\[
\phi_\pi
=
\phi_{\tau_1}\oplus\cdots\oplus\phi_{\tau_k}
\oplus
\phi_{\pi_0}
\oplus
\tilde{\phi}^c_{\tau_k}\oplus\cdots\oplus\tilde{\phi}^c_{\tau_1}
\]
where $\phi_{\tau_j}$ (resp. $\phi_{\pi_0}$) is the $L$-parameter attached to $\tau_j$ (resp. $\pi_0$) for 
$1\le j\le k$. Since 
\[
\e(s,\phi_\pi,\psi_E)
=
\e(s,\phi_{\pi_0},\psi_E)
\prod_{j=1}^k\e(s,\phi_{\tau_j},\psi_E)\e(s,\tilde{\phi}^c_{\tau_j},\psi_E)
\]
we find that 
\[
a_\pi
=
a_{\pi_0}+2\sum_{j=1}^k a_{\tau_j}
\]
by \corref{C:epsilon}.\\

Since $K_{n,m}$ is conjugate to $K^0_{n,m}$ for each $m$, it suffices to prove \eqref{E:dim formula} with 
$\cV_\pi^{K_{n,m}}$ replaced by $\cV_\pi^{K^0_{n,m}}$. We do this by the induction on $k$. Suppose that $k=1$ so that 
$r=r_1$, $\pi=\tau_1\rtimes\pi_0$ and  $a_\pi=a_{\pi_0}+2a_{\tau_1}$.
Write $m=e+2\ell$ for some $e=0,1$ and $\ell\ge 0$. Then \corref{C:inter Levi} implies
\[
{\rm dim}_{\mathbb{C}}\cV_{\pi}^{K^0_{n,m}}
=
\sum_{d=0}^{\ell}
{\rm dim}_{\mathbb{C}}\left(
\cV_{\tau_1}\otimes\cV_{\pi_0}
\right)^{M^d_{n,r}}
=
\sum_{d=0}^{\ell}
\left(
{\rm dim}_{\mathbb{C}}
\cV_{\tau_1}^{\Gamma_{r,\ell-d}}
\right)
\left(
{\rm dim}_{\mathbb{C}}
\cV_{\pi_0}^{K^0_{n-r,e+2d}}
\right).
\]
Observe that if $\ell-d<a_{\tau_1}$ or $e+2d<a_{\pi_0}$, then $\cV_{\tau_1}^{\Gamma_{r,d-\ell}}=0$ or 
$\cV_{\pi_0}^{K^0_{n-r,e+2d}}=0$ accordingly by \thmref{T:main'} and the theory of local newforms for ${\rm GL}_r$ 
(cf. \cite{JPSS1981}). It follows that $\cV_\pi^{K^0_{n,m}}=0$ if $m<a_\pi$. Now suppose that $m\ge a_\pi$. 
If $m$ and $a_\pi$ have the same parity, then we can write $m=a_\pi+2\ell_1=a_{\pi_0}+2(a_{\tau_1}+\ell_1)$ for some 
$\ell_1\ge 0$. Note that $m$ and $a_{\pi_0}$ also have the same parity. Then the above observation and the dimension 
formulae for $\pi_0$ (cf. \eqref{E:dim formula}) and $\tau_1$ (cf. \cite{Reeder1991}) give
\begin{align*}
{\rm dim}_{\mathbb{C}}\cV_{\pi}^{K^0_{n,m}}
&=
\sum_{d=0}^{\ell}
\left(
{\rm dim}_{\mathbb{C}}
\cV_{\tau_1}^{\Gamma_{r,\ell-d}}
\right)
\left(
{\rm dim}_{\mathbb{C}}
\cV_{\pi_0}^{K^0_{n-r,e+2d}}
\right)\\
&=
\sum_{d=0}^{\ell_1}
\left(
{\rm dim}_{\mathbb{C}}
\cV_{\tau_1}^{\Gamma_{r,a_{\tau_1}+\ell_1-d}}
\right)
\left(
{\rm dim}_{\mathbb{C}}
\cV_{\pi_0}^{K^0_{n-r,a_{\pi_0}+2d}}
\right)\\
&=
\sum_{d=0}^{\ell_1}
\begin{pmatrix}
r-1+\ell_1-d\\
\ell-d
\end{pmatrix}
\begin{pmatrix}
\ell_1+n-r\\
n-r
\end{pmatrix}\\
&=
\begin{pmatrix}
\ell_1+n\\
n
\end{pmatrix}.
\end{align*}
The last equality follows from the combinatorial identity in \cite[(3.2)]{Gould1972}. This proves \eqref{E:dim formula} when 
$m$ and $a_\pi$ have the same parity. The proof when $m$ and $a_\pi$ have the opposite parity is similar. We just need 
to replaced $a_{\pi}$ by $a_{\pi}+1$. The proof for $k=1$ is now complete. Suppose inductively that 
\eqref{E:dim formula} holds for $k-1$ and $\pi$ is of the form \eqref{E:pi}. Then by induction in stage, we can write 
$\pi\cong\tau_1\rtimes \pi_1$ where $\pi_1:=\tau_2\x\cdots\x\tau_k\rtimes\pi_0$ is an irreducible generic 
representation of $G_{n-r_1}$. Since $a_\pi=a_{\pi_1}+2a_{\tau_1}$ and \eqref{E:dim formula} holds for $\pi_1$ by 
the induction hypothesis, we can apply the argument for $k=1$ to obtain \eqref{E:dim formula} for $\pi$. 
This finishes the proof.\qed

\section{Conjectural bases for oldforms}\label{S:basis}
Now we come to the second part of this paper, namely, we would like to compute the Rankin-Selberg integrals attached 
newforms and also oldforms. As we will see, many statements and their proofs in this part of the paper are similar to that 
of \cite{YCheng2022}. So in these cases, we will simply write down the statements and refer their proofs to those in 
loc. cit.. However, we will indicate the modifications whenever needed.

\subsection{Level raising operators}
As in the literature, the conjectural bases for oldforms are obtained from certain level raising procedures.
To define the level raising operators, we begin with the following lemma whose proof is similar to that of 
\cite[Lemma 3.1]{YCheng2022}.

\begin{lm}\label{L:same vol}
Let $dh$ be a Haar measure on $H_r$. Then we have ${\rm vol}(R_{r,m}, dh)={\rm vol}(R_{r,0},dh)$ for all $m\geq 0$.
\end{lm}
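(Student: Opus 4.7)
The plan is to separate the proof into two steps. First, for $m \ge 2$ I invoke the conjugacy noted just before this subsection: writing $m = e + 2\ell$ with $e \in \{0,1\}$ and $\ell \ge 1$, one has $R_{r,m} = t R_{r,e} t^{-1}$ with $t = \bigl(\begin{smallmatrix} \varpi^\ell I_r & 0 \\ 0 & \varpi^{-\ell} I_r \end{smallmatrix}\bigr) \in T_r \subset H_r$. Since any Haar measure $dh$ is invariant under inner conjugation by elements of $H_r$, this yields $\mathrm{vol}(R_{r,m}, dh) = \mathrm{vol}(R_{r,e}, dh)$ immediately, reducing the proof to establishing $\mathrm{vol}(R_{r,0}, dh) = \mathrm{vol}(R_{r,1}, dh)$.

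For this remaining identity I would set $J := R_{r,0} \cap R_{r,1}$. From \eqref{E:K_n,m}, restricted to $H_r$ via the embedding of \S\ref{SSS:embed}, one checks directly that $J$ consists of matrices $\bigl(\begin{smallmatrix} A & B \\ C & D \end{smallmatrix}\bigr) \in H_r$ with $A, B, D$ having entries in $\mathfrak{o}_E$ and $C$ having entries in $\mathfrak{p}_E$; this is a parahoric subgroup of $H_r$ contained in both $R_{r,0}$ and $R_{r,1}$. Since
\[
\mathrm{vol}(R_{r,e}, dh) = [R_{r,e} : J] \cdot \mathrm{vol}(J, dh) \qquad (e = 0, 1),
\]
the desired equality reduces to the combinatorial statement $[R_{r,0} : J] = [R_{r,1} : J]$. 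I would prove this by producing explicit coset representatives for $R_{r,e}/J$ in each case, using a decomposition analogous to that of \lmref{L:decomp K} for $K_{n,m}$; in both cases these representatives are parametrized by the image of the Siegel cell in the special fibre, and the two counts coincide.

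The main obstacle --- more a matter of care than of substance --- is to carry out these explicit coset counts in the paper's matrix conventions, since $R_{r,0}$ and $R_{r,1}$ are not conjugate in $H_r$ and yet their special fibres must be shown to have equal order. The argument closely parallels that of \cite[Lemma 3.1]{YCheng2022} in the symplectic setting and should translate to the unitary case after minor notational adjustments.
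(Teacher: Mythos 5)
Your argument is correct, but it follows a genuinely different route from the paper's. The paper conjugates $R_{r,m}$ directly to $R_{r,0}$ for \emph{all} $m\ge 0$ at once, using the element $t_{r,m}=\mathrm{diag}(\varpi^m I_r, I_r)$, which lies in the similitude group $\mathrm{GU}_{2r}(F)$ rather than in $H_r$; since this is an \emph{outer} conjugation of $H_r$, the paper must still justify that it preserves Haar measure, and it does so by invoking the modulus-character argument of \cite[Lemma 3.1]{YCheng2022} (the remark about $Z(\mathrm{GU}_{2r}(F))\cap H_r = E^1$ being compact is exactly what is needed to make that argument carry over to the unitary case). You instead use the inner conjugation by $\mathrm{diag}(\varpi^\ell I_r,\varpi^{-\ell}I_r)\in T_r\subset H_r$ to collapse all $m\ge 2$ down to the parity $e\in\{0,1\}$ for free, which is clean and requires no modulus argument, and then handle the remaining base case $\mathrm{vol}(R_{r,0})=\mathrm{vol}(R_{r,1})$ by an index count against $J=R_{r,0}\cap R_{r,1}$. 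That base case is indeed the only place where real work is needed, and your sketch is sound: reduction modulo $\mathfrak{p}_E$ identifies $R_{r,e}/J_{+}$ with $\U_{2r}(k_E/k_F)$ and $J/J_{+}$ with a Siegel (resp.\ opposite-Siegel) parabolic, so $[R_{r,0}:J]=[R_{r,1}:J]=|\U_{2r}(k)|/|P(k)|$. The trade-off is that your approach is more elementary and self-contained (it never leaves $H_r$), at the cost of having to carry out the explicit coset count in the special fibre; the paper's approach is shorter but leans on the general fact that conjugation by similitude elements is unimodular. One small notational slip: with $t=\mathrm{diag}(\varpi^\ell I_r,\varpi^{-\ell}I_r)$ one has $R_{r,m}=t^{-1}R_{r,e}\,t$ rather than $tR_{r,e}t^{-1}$, but this of course does not affect the volume comparison.
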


\begin{proof}
Let $t_{r,m}=\diag{\varpi^mI_r, I_r}$. Then $t_{r,m}\in{\rm GU}_{2r}(F)$, the similitude unitary group with $2r$ variables 
and $t_{r,m}R_{r,m}t_{r,m}^{-1}=R_{r,0}$. Now we can apply the proof of \cite[Lemma 3.1]{YCheng2022} to obtain the 
lemma. Note that the intersection of the center of ${\rm GU}_{2r}(F)$ and $\U_{2r}(F)$ is $E^1$, which is compact. So the
proof of loc. cit. is still applicable.
\end{proof}

Following \cite{Reeder1991} and \cite{Tsai2013}, \cite{YCheng2022}, the level raising operators coming from the elements 
in the Hecke algebras $\sH(H_n//R_{n,m})$ for $m\ge 0$. To describe a $\mathbb{C}$-linear basis of 
$\sH(H_n//R_{n,m})$, let 
\[
\sP
=
\stt{\lambda=(\lambda_1,\ldots,\lambda_n)\in\mathbb{Z}^n\mid \lambda_1\ge \cdots\ge \lambda_n\ge 0}.
\]
Given $\la\in\sP$, we denote
\[
\varpi^\la
=
\diag{\varpi^{\la_1},\ldots,\varpi^{\la_n},\varpi^{-\la_n},\ldots,\varpi^{-\la_1}}\in T_n
\]
and put
\[
\varphi_{\la,m}
=
\mathbb{I}_{R_{n,m}\varpi^\la R_{n,m}}\in\sH(H_n//R_{n,m}).
\]
Then we have
\[
\sH(H_n(F)//R_{n,m})=\bigoplus_{\lambda\in\sP}\bbC\cdot\varphi_{\lambda, m}.
\]
as $\mathbb{C}$-linear spaces.\\

Now let $\pi$ be an irreducible generic representation of $G_n$ and $v\in\cV_\pi^{R_{n,m}}$. We define 
\[
\varphi\star v
=
\int_{H_n}\varphi(h)\pi(h^{-1})vdh
\quad
\text{(${\rm vol}(R_{n,0},dh)=1$).}
\]
where $\varphi\in\sH(H_n//R_{n,m})$. Clearly, we have $\varphi\star v\in\cV_\pi^{R_{n,m}}$. In particular, if 
$v\in\cV_\pi^{K^0_{n,m}}$ and $\varphi\in\sH(H_n//R_{n,e})$, where $e=0,1$ is such that $m\equiv e\pmod 2$, then 
$\varphi\star v\in\cV_\pi^{K^0_{n,m'}}$ for some $m'\equiv e\pmod 2$ by the filtration \eqref{E:filtration}. 
The following lemma tells us what $m'$ is.

\begin{lm}\label{L:level raising for K^0}
Let $v\in\cV_\pi^{K^0_{n,m}}$ and $\la=(\la_1,\ldots,\la_n)\in\sP$. Then we have 
$\varphi_{\la,e}\star v\in\cV_\pi^{K^0_{n,m+2\la_1}}$.
\end{lm}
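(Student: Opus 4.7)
The plan is to verify that $w := \varphi_{\la,e} \star v$ is fixed by $\pi(k)$ for every $k$ in a generating set of $K^0_{n,m+2\la_1}$. Applying the decomposition \eqref{E:decomp K^0} with $m$ replaced by $m+2\la_1$ (so $\ell$ becomes $\ell+\la_1$), one writes
\[
K^0_{n,m+2\la_1}
= E^1_{m+2\la_1} \cdot \prod_{i=1}^n \chi_{\e_i}(\mathfrak{p}_E^{\ell+\la_1}) \cdot \prod_{j=1}^n \chi_{-\e_j}(\mathfrak{p}_E^{\ell+\la_1+e}) \cdot R_{n,e},
\]
so it suffices to verify invariance under four types of generators.

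For $k \in R_{n,e} \subset H_n$, the change of variables $h \mapsto hk^{-1}$ in the defining integral combined with the right $R_{n,e}$-invariance of $\varphi_{\la,e}$ yields $\pi(k)w = w$. For $k \in E^1_{m+2\la_1}$, centrality together with the inclusions $E^1_{m+2\la_1} \subset E^1_m \subset K^0_{n,m}$ gives $\pi(k)v = v$, hence $\pi(k)w = w$.

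For the substantive cases $k = \chi_{\pm\e_j}(y)$, I decompose $R_{n,e}\varpi^\la R_{n,e} = \bigsqcup_i R_{n,e}\varpi^\la r_i$ with $r_i$ chosen in the unipotent part of $R_{n,e}$ that is positive with respect to the Borel of $H_n$ for which $\la$ is dominant. Since $\pi(r)v = v$ for $r \in R_{n,e}$, this yields (up to a fixed volume constant)
\[
w = \sum_i \pi(r_i^{-1} \varpi^{-\la})v,
\]
and the identity $k r_i^{-1}\varpi^{-\la} = r_i^{-1}\varpi^{-\la}\cdot(\varpi^\la r_i k r_i^{-1}\varpi^{-\la})$ reduces the claim $\pi(k)w = w$ to showing $\varpi^\la r_i k r_i^{-1}\varpi^{-\la} \in K^0_{n,m}$ for each $i$. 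Expanding $r_i k r_i^{-1}$ via Chevalley commutator relations writes it as a finite product of factors $\chi_\alpha(y_\alpha)$ with $\alpha = \pm\e_j + \beta$, where $\beta$ is a non-negative $\mathbb{Z}$-linear combination of the positive $H_n$-roots appearing in $r_i$; the dominance $\la_1 \ge \la_j$ and the constraint $y \in \mathfrak{p}_E^{\ell+\la_1+(e\text{ or }0)}$ then guarantee, after an exponent count, that each conjugate $\chi_\alpha(\varpi^{\la(\alpha)}y_\alpha)$ lies in the root subgroup of $K^0_{n,m}$ attached to $\alpha$ in \eqref{E:decomp K^0}.

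The main obstacle is precisely this exponent bookkeeping in the last step: one must check case-by-case, over the roots $\alpha$ that can appear in the commutator expansion, that the $\mathfrak{p}_E$-order of $y_\alpha$ (built from powers of $y$ and the $r_i$-parameters) compensates for any negative shift coming from $\varpi^{\la(\alpha)}$. The choice of $r_i$ in the positive-unipotent part of $R_{n,e}$, the dominance of $\la$, and the inequality $\la_1 \ge \la_j$ for every $j$ are the ingredients that restrict the set of $\alpha$ that occur and make the required inequalities automatic.
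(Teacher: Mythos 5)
The coset-representative claim is wrong, and your final exponent count depends on it. For a positive $H_n$-root $\gamma$, dominance of $\la$ gives $\la(\gamma)\ge 0$, so $\varpi^\la\chi_\gamma(z)\varpi^{-\la}=\chi_\gamma(\varpi^{\la(\gamma)}z)\in R_{n,e}$ whenever $\chi_\gamma(z)\in R_{n,e}$; in other words the \emph{entire} positive unipotent part of $R_{n,e}$ already lies in the stabilizer $\varpi^{-\la}R_{n,e}\varpi^\la\cap R_{n,e}$ of the left coset $R_{n,e}\varpi^\la$. Hence every $r_i$ drawn from the positive unipotent represents the single coset $R_{n,e}\varpi^\la$, and such $r_i$ cannot form a transversal for $R_{n,e}\varpi^\la R_{n,e}=\bigsqcup_i R_{n,e}\varpi^\la r_i$ once $\la\neq 0$. (Already for $\SL_2$ with $\varpi^\la=\diag{\varpi,\varpi^{-1}}$, a genuine left-coset transversal requires the lower unipotent plus a Weyl element.) A correct transversal must use the negative unipotent, together with Weyl translates when $\la$ is not regular, and this destroys the mechanism you invoke: the roots $\alpha=\pm\e_j+\beta$ appearing in the Chevalley expansion then have $\beta$ a non-\emph{positive} combination of positive roots, so $\la(\alpha)$ can be strictly \emph{smaller} than $\la(\pm\e_j)$, and the needed inequalities are not ``automatic'' from dominance. (They do hold, but the compensation comes from the valuations of the $r_i$-parameters $z_\gamma$, and the Weyl-element representatives must be handled separately --- a different and more delicate bookkeeping than what you describe.)

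The paper avoids the coset decomposition altogether. It sets $v_1=\pi(\varpi^{-\la})v$, which is fixed by $\varpi^{-\la}K^0_{n,m}\varpi^\la$, and $v_2=\int_{R_{n,e}}\pi(h)v_1\,dh$, a nonzero multiple of $\varphi_{\la,e}\star v$; after conjugating by Weyl elements of $\sW_e$ to reduce to $\chi_{\e_n}$, it proves the containment $h^{-1}\chi_{\e_n}(\frak{p}_E^{\ell+\la_1})h\subset\varpi^{-\la}K^0_{n,m}\varpi^\la$ on an explicit generating set of $R_{n,e}$ that includes \emph{both} positive and negative root subgroups as well as the torus, and then substitutes $uh=h\cdot(h^{-1}uh)$ inside the integral. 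Your first two reductions, handling $R_{n,e}$- and $E^1_{m+2\la_1}$-invariance via \eqref{E:decomp K^0}, are correct and match the paper's outline; it is the root-group step that needs to be reworked along these lines.
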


\begin{proof}
The proof of this lemma is similar to that of \cite[Proposition 8.1.1]{Tsai2013}. However, since Tsai's proof is very brief, we 
shall fill in some details here. Let us write $m=e+2\ell$ for some $\ell\ge 0$.  Let $v_1=\pi({\varpi^{-\la}})v$ and 
\[
v_2=\int_{R_{n,e}}\pi(h)v_1dh.
\]
Then $v_1$ is $\varpi^{-\la}K^0_{n,m}\varpi^{\la}$-invariant and $v_2$ is a non-zero multiple of $\varphi_{\la,e}\star v$.
Thus it suffices to show that $v_2$ is fixed by $K^0_{n,m+2\la_1}$. For this, we use the decomposition 
\eqref{E:decomp K^0} of $K^0_{n,m}$ when $m\ge 1$ and the fact that $K^0_{n,0}=K_{n,0}$ is generated by $E^1$, 
$R_{n,0}$ and the root groups $\chi_{\pm\e_i}(\frak{o}_E)$ for $1\le i\le n$.\\

Certainly, $v_2$ is fixed by $R_{n,e}$ and 
$E^1_{m+2\la_1}$. Therefore, we only need to check that $v_2$ is invariant under $\chi_{\e_i}(\frak{p}_E^{\ell+\la_1})$ 
and $\chi_{-\e_j}(\frak{p}_E^{e+\ell+\la_1})$ for $1\le i,j\le n$ by the facts just mentioned. 
By conjugating these root groups by Weyl elements in 
$\sW_e$ (cf. \eqref{E:Weyl group}), we actually only need to check that $v_2$ is 
$\chi_{\e_n}(\frak{p}_E^{\ell+\la_1})$-invariant. To do so, we claim that $h^{-1}uh\in\varpi^{-\la}K^0_{n,m}\varpi^\la$ for 
all $h\in R_{n,e}$ and $u\in\chi_{\e_n}(\frak{p}_E^{\ell+\la_1})$. Assuming the claim, we find that
\[
\pi(u)v_2
=
\int_{R_{n,e}}\pi(uh)v_1dh
=
\int_{R_{n,e}}\pi(h\cdot h^{-1}uh)v_1dh
=
\int_{R_{n,e}}\pi(h)v_1dh
=
v_2
\]
and the proof follows.\\

To verify the claim, note that $\varpi^{-\la}K^0_{n,m}\varpi^\la$ contains $\chi_{\e_i}(\frak{p}_E^{\ell-\la_i})$, 
$\chi_{-\e_j}(\frak{p}_E^{e+\ell+\la_j})$ for $1\le i,j\le n$ and $\varpi^{-\la}R_{n,e}\varpi^{\la}$. Moreover, $R_{n,e}$
is generated by $T_n\cap R_{n,e}$, $\chi_{\e_i-e_j}(\frak{o}_E)$, $\chi_{-\e_i+\e_j}(\frak{o}_E)$, 
$\chi_{\e_i+\e_j}(\frak{p}_E^{-e})$, $\chi_{-\e_i-\e_j}(\frak{p}_E^e)$ for $1\le i<j\le n$ and also $\chi_{2e_k}(\frak{p}_F^{-e})$
$\chi_{-2e_k}(\frak{p}_F^e)$ for $1\le k\le n$. Here for $y\in F$ and $1\le k\le n$, we put
\[
\chi_{2e_k}(x)=I_{2n+1}+x\delta E_{k, 2n+2-k}
\quad\text{and}\quad
\chi_{-2e_k}(x)=I_{2n+1}+x\delta E_{2n+2-k, k}
\]
where $E_{ij}$ denotes the $(2n+1)$-by-$(2n+1)$ matrix whose $(i,j)$-entry is $1$ and all other entries are $0$, and 
\[
\chi_{\pm 2\e_k}(S)=\stt{\chi_{\pm 2\e_k}(x)\mid x\in S}
\]
for any subset $S$ of $F$. Now let $y\in \frak{p}_E^{\ell+\la_1}$ and put $u=\chi_{e_n}(y)$. To show that 
$h^{-1}uh\in\varpi^{-\la}K^0_{n,m}\varpi^\la$ for every $h\in R_{n,e}$, we may assume that $h$ is an element in one of 
the subgroups generating $R_{n,e}$. Then the claim is proved via case by case verification. For example, one checks 
taht 
\[
\chi_{\e_i-\e_n}(-z)\chi_{\e_n}(y)\chi_{\e_i-\e_n}(z)
=
\chi_{\e_n}(y)\chi_{\e_i+\e_n}(2^{-1}y\b{y}z)\chi_{\e_i}(-yz)
\]
for $1\le i\le n-1$. In particular, if $z\in\frak{o}_E$, then the RHS of the above identity is contained in 
$\varpi^{-\la}K^0_{n,m}\varpi^\la$. Indeed, since $y\in\frak{p}_E^{\ell+\la_1}\subseteq\frak{p}_E^{\ell-\la_1}$ and 
$-yz\in\frak{p}_E^{\ell+\la_1}\subseteq\frak{p}_E^{\ell+\la_i}$, we see that both $\chi_{\e_n}(y)$ and $\chi_{\e_i}(-yz)$
are contained in $\varpi^{-\la}K^0_{n,m}\varpi^\la$. On the other hand, since 
$2^{-1}y\b{y}z\in\frak{p}_E^{2\ell+2\la_1}\subseteq\frak{p}_E^{\la_i+\la_n-e}$, we find that 
$\chi_{\e_i+\e_n}(2^{-1}y\b{y}z)\varpi^{-\la}R_{n,e}\varpi^\la$. This completes the proof.
\end{proof}

Now we can define the level raising operators used in the constrictions of the conjectural bases. Put
\[
\mu_\ell=(\ell,\ldots,\ell)\in\sP
\] 
for $\ell\ge 0$. Let $v\in\cV_\pi^{K_{n,m}}$ and 
write $m=e+2\ell$ for some $e=0,1$ and $\ell\ge 0$. Then $\pi(\varpi^{\mu_\ell})v\in\cV_\pi^{K^0_{n,m}}$; hence
\[
\varphi_{\la,e}\star \pi(\varpi^{\mu_\ell})v\in\cV_{\pi}^{K^0_{n,m+2\la_1}}
\] 
by \lmref{L:level raising for K^0}, where
$\la=(\la_1,\ldots,\la_n)\in\sP$. Since $\cV_\pi^{K^0_{n,m+2\la_1}}\subseteq\cV_\pi^{K^0_{n,m'}}$ if $m'$ and 
$m$ have the same parity and $m'\ge m+2\la_1$, we thus obtain a level raising operator $\eta_{\la,m,m'}$ from 
$\cV_\pi^{K_{n,m}}$ to $\cV_\pi^{K_{n,m'}}$ given by
\[
\eta_{\la,m,m'}(v)
=
\pi(\varpi^{-\mu_{\ell'}})\varphi_{\la,e}\star\pi(\varpi^{\mu_\ell})v
\] 
where $m'=e+2\ell'$ for some $\ell'\ge 0$ and we understand that $\varpi^{-\mu_{\ell'}}=\left(\varpi^{\mu_{\ell'}}\right)^{-1}$.
Note that $\eta_{\mu_0,m,m}$ is the identity map by \lmref{L:same vol} and $\eta_{\la,m,m'}$ raises level by an even 
integer. To raise levels to those with opposite parity, it natural to consider the operators from $\cV_\pi^{K_{n,m}}$ to 
$\cV_\pi^{K_{n,m+1}}$ defined by 
\[
v\mapsto\vol(K_{n,m}\cap K_{n,m+1},dg)^{-1}\int_{K_{n,m+1}}\pi(g)vdg.
\]    
However, we don't consider these operators here. The reason will be clear in the next subsection.

\subsection{Conjectural bases}\label{SS:basis}
Let $\pi$ be an irreducible generic representation of $G_n$ and $v_\pi\in\cV_\pi^{K_{n,a_\pi}}$ be a newform. 
By \thmref{T:main}, the space $\cV_\pi^{K_{n,a_\pi+1}}$ is also one-dimensional. So we simply let $v_\pi'$ be its basis. 
Now let $m\ge a_\pi$ be an integer. If $m$ and $a_\pi$ have same parity, then we put
\[
\sB_{\pi,m}
=
\stt{\eta_{\la,a_\pi,m}(v_\pi)\mid\text{$\la=(\la_1,\ldots,\la_n)\in\sP$ with $2\la_1\le m-a_\pi$}}.
\]
On the other hand, if $m$ and $a_\pi$ have opposite parity, then we set
\[
\sB_{\pi,m}
=
\stt{\eta_{\la,a_\pi+1,m}(v'_\pi)\mid\text{$\la=(\la_1,\ldots,\la_n)\in\sP$ with $2\la_1\le m-a_\pi-1$}}.
\]
We conjecture that $\sB_{\pi,m}$ is a basis of $\cV_\pi^{K_{n,m}}$ for each $m\ge a_\pi$. In fact, it's not hard to 
see that the set $\sB_{\pi,m}$ has right cardinality, namely, the cardinality of $\sB_{\pi,m}$ is equal to the dimension of 
$\cV_\pi^{K_{n,m}}$ given by \eqref{E:dim formula}. We introduce these conjectural bases in order to compute the 
Rankin-Selberg integrals attached to oldforms. Logically, one has to first verify that these $\sB_{\pi,m}$ are linearly 
independent and then compute the integrals attached to the elements in $\sB_{\pi,m}$. Here we directly compute the 
integrals (under the Assumption \ref{H}) without knowing the linear independence. As a consequence of our computations, 
we show that $\cB_{\pi,m}$ are linearly independent when $\pi$ is tempered. This also gives another reason why we 
don't write $v_\pi'$ as a level raising of $v_\pi$, since we can compute the Rankin-Selberg integral attached to $v_\pi'$ 
without appealing to level raising operators, but only use the fact that $\cV_\pi^{K_{n,a_\pi+1}}$ is one-dimensional.
Of course, one can still ask whether or not
\[
\int_{K_{n,a_\pi+1}}\pi(g)v_\pi dg\ne 0.
\]
But we don't investigate this problem due to our purpose.

\section{Rankin-Selberg integrals}\label{S:RS int}
In this section, we introduce the local Rankin-Selberg integrals attached to generic representations 
of $\U_{2n+1}\x{\rm Res}_{E/F}\GL_r$ with $1\le r\le n$ constructed in \cite{Tamir1991} (for $r=n$) and 
\cite{Ben-ArtziSoudry2009} (for $r\le n$). In this section only, we take $\psi_0$ to be an arbitrary non-trivial additive 
character of $F$, $\theta\in E^\x$ with $\b{\theta}=-\theta$ and let $\psi$ be the additive character of $E$ defined by 
$\psi(x)=\psi(\frac{x-\b{x}}{2\theta})$, so that $\psi$ is trivial on $F$.
To describe these integrals, let $Z_r\subset\GL_r(F)$ be the upper triangular 
maximal unipotent subgroup and $\psi_{Z_r}$ be a non-degenerate character of $Z_r$ defined by 
\[
\b{\psi}_{Z_r}(z)
=
\ol{\psi(z_{12}+z_{23}+\cdots+z_{r-1, r})}
\]
for $z=(z_{ij})\in Z_r$. Let $\tau$ be a representation of $\GL_r(F)$, which is of $Whittaker$ $type$ in the sense that
\[
\dim_{\mathbb{C}}{\rm Hom}_{Z_r}(\tau,\psi_{Z_r})=1.
\]
In particular, irreducible generic representations of $\GL_r(E)$ are of Whittaker type.
We fix a non-zero element $\Lambda_{\tau,\b{\psi}}$ in ${\rm Hom}_{Z_r}(\tau,\b{\psi}_{Z_r})$. 
Define $\tau^*$ to be the representation of $\GL_r(F)$ on $\cV_\tau$ with the action $\tau^*(a)=\tau(a^*)$.
Notice that $\tau^*$ is also of Whittaker type and we fix 
$\Lambda_{\tau^*,\b{\psi}}\in {\rm Hom}_{Z_r}(\tau^*,\b{\psi}_{Z_r})$ with
$\Lambda_{\tau^*,\b{\psi}}=\Lambda_{\tau,\b{\psi}}\circ\tau(d_r)$ where 
\[
d_r
=
\begin{pmatrix}
1&&&\\
&-1&&\\
&&\ddots\\
&&&(-1)^{r-1}
\end{pmatrix}
\in\GL_r(E).
\]
Let $s$ be a complex number and $\tau_s$ be a representation of $\GL_r(E)$ on $\cV_\tau$ with the action 
$\tau_s(a)=\tau(a)|\det(a)|_E^{s-\frac{1}{2}}$. 

\subsection{Induced representations}\label{SS:ind rep}
Let $V_r\subset H_r$ be the upper triangular maximal unipotent subgroup and $Q_r\subset H_r$ be the parabolic 
subgroup containing $V_r$ with the Levi decomposition $Q_r=M_r\ltimes Y_r$ where 
\[
M_r
=
\stt{\pMX{a}{}{}{a^*}\mid a\in\GL_r(E)}\cong\GL_r(E)
\]
and $Y_r\subset V_r$. 
By pulling back the homomorphism $Q_r\twoheadrightarrow Q_r/Y_r\cong\GL_r(E)$, $\tau_s$ becomes a 
representation of $Q_r$ on $\cV_\tau$. We then form a normalized induced representation
\[
\rho_{\tau, s}={\rm Ind}_{Q_r}^{H_r}\tau_s
\]
of $H_r$. The representation space $I_r(\tau,s)$ of $\rho_{\tau,s}$ consists of smooth functions 
$\xi_s: H_r\to\cV_\tau$ satisfying 
\[
\xi_s(mnh)=\delta_{Q_r}^{\frac{1}{2}}(m)\tau_s(m)\xi_s(h)
\]
for $m\in M_r$, $n\in Y_r$ and $h\in H_r$, and the action of $H_r$ on $I_r(\tau,s)$ is by the right translation.

\subsection{Intertwining maps}
There is an intertwining map $M(\tau,s)$ from $I_r(\tau,s)$ to $I_r(\tau^*,1-s)$ given by the integral
\[
M(\tau,s)\xi_s(h)
=
\int_{Y_r}\xi_s(w_r^{-1}uh)du
\]
for $\Re(s)\gg 0$ and by meromorphic continuation in general.
Here 
\[
w_r
=
\begin{pmatrix}&I_r\\I_r&\end{pmatrix}
\in 
H_r
\]
and the Haar measure $du$ is defined as a product measure, where each root group is isomorphic to $E$ or $F$, 
and we take the self-dual measure on them with respect to $\psi$ or $\psi_0$. We normalize $M(\tau,s)$ by 
Shahidi's local coefficients $c_\theta(2s-1, \tau, {\rm As}, \psi_0)$. To define them, let us put
\[
f_{\xi_s}(h)=\Lambda_{\tau,\b{\psi}}(\xi_s)(h).
\]
Similarly, for a given $\xi_s^*\in I_r(\tau^*,s)$, let $f_{\xi^*_s}(h)=\Lambda_{\tau^*,\b{\psi}}(\xi^*_s(h))$.
Then the local coefficients are given by
\[
\int_{V_r}f_{\xi_s}(w_ruh)\psi(u_{r,r+1})du
=
c_\theta(2s-1,\tau,{\rm As},\psi_0)\int_{V_r}f_{M(\tau,s)\xi_s}(w_ruh)\psi(u_{r,r+1})du.
\]
where $u=(u_{ij})\in V_r$ and the Haar measure $du$ on $V_r$ is arbitrary. The normalized intertwining maps
$M^\dagger_{\psi_0,\theta}(\tau,s)$ are then defined by
\[
M^\dagger_{\psi_0,\theta}(\tau,s)
=
c_\theta(2s-1,\tau,{\rm As},\psi_0)M(\tau,s).
\]

\subsection{Rankin-Selberg integrals}
Recall that $N_n\subset G_n$ is the upper triangular maximal unipotent subgroup. Define a non-degenerated character 
$\psi_{N_n}$ of $N_n$ by 
\[
\psi_{N_n}(u)=\psi(u_{12}+u_{23}+\cdots+u_{n-1, n}+u_{n,n+1})
\]
for $u=(u_{ij})\in N_n$. Let $\pi$ be a representation of $G_n$ that is of Whittaker type, i.e.
\[
\dim_{\mathbb{C}}{\rm Hom}_{N_n}(\pi,\psi_{N_n})=1.
\]
In particular, irreducible generic representations of $G_n$ are of Whittaker type.
We fix a non-zero element $\Lambda_{\pi,\psi}$ in ${\rm Hom}_{N_n}(\pi,\psi_{N_n})$. 
For $v\in\cV_\pi$, let $W_v$ be the associated 
Whittaker function with respect to $\psi$, that is, $W_v$ is a $\mathbb{C}$-valued function on $G_n$ given by 
$W_v(g)=\Lambda_{\pi,\psi}(\pi(g)v)$.\\

For a given $x\in{\rm Mat}_{(n-r)\x r}(E)$, we denote 
\begin{equation}\label{E:X_n,r}
\tilde{x}=\begin{pmatrix}I_r&&&&\\x&I_{n-r}&&\\&&1\\&&&I_{n-r}&\\&&&-J_r{}^t\b{x} J_{n-r}&I_r\end{pmatrix}\in G_n.
\end{equation}
Then the Rankin-Selberg integral $\Psi_{n,r}(v\ot\xi_s)$ attached to $v\in\cV_\pi$ and 
$\xi_s\in I_r(\tau,s)$ is given by
\begin{equation}\label{E:RS int}
\Psi_{n,r}(v\ot\xi_s)
=
\int_{V_r\backslash H_r}\int_{{\rm Mat}_{(n-r)\x r}(E)}
W_v(\tilde{x}h)f_{\xi_s}(h)dxdh.
\end{equation}
The Haar measures $dx$ and $dh$ can be any at this moment and we embed $H_r$ into $G_n$ via the embedding 
given in \S\ref{SSS:embed}. As usual, these integrals, which are originally absolute convergence in some half planes, 
have meromorphic continuations to whole complex plane, and give rise to rational functions in $q_E^{-s}$. 
Moreover, the following functional equations
\begin{equation}\label{E:FE}
\Psi_{n,r}(v\ot M^\dagger_{\psi_0,\theta}(\tau,s)\xi_s)=\gamma_\theta(s,\pi\x\tau,\psi_0)\Psi_{n,r}(v\ot\xi_s)
\end{equation}
are satisfied for every $v\in\cV_\pi$ and $\xi_s\in I_r(\tau,s)$, where $\gamma_\theta(s,\pi\x\tau,\psi_0)$ is a nonzero 
rational function in $q^{-s}_E$ depending only on $\psi_0$, $\theta$ and (the classes of) $\pi$, $\tau$.

\begin{remark}
We have some comments about these integrals. First, our definition of $\Psi_{n,r}(v\ot\xi_s)$ is slightly different from 
(but equivalent to) that of \cite{Ben-ArtziSoudry2009} because we want to emphasis the resemblance between these 
integrals and the ones attached to generic representations of special orthogonal groups (cf. \cite{Soudry1993}). Second,
in \cite{Ben-ArtziSoudry2009}, the integrals are attached to irreducible generic representations and the results about 
them, namely, admit meromorphic continuations, are rational functions in $q_E^{-s}$ and can be made into a non-zero
constant, are proved for irreducible ones. However, their proofs are easily apply to our more general settings. The only
less obvious statement is the existence of the functional equations. In fact, in loc. cit., the authors did not investigate
the functional equations of these integrals. On the other hand, Q. Zhang proved the existence of the functional equations 
for irreducible representations in \cite{QZhang2019} by applying the uniqueness of Bessel model for $\U_{2n+1}$ 
established in \cite{AGRS2010} and \cite{GanGrossPrasad2012}. In the Appendix, we will extend Zhang's result to 
representations of Whittaker type.
\end{remark}

The following lemma explains why we only consider unramified representations of $\GL_r(E)$ when computing the 
Rankin-Selberg integrals.

\begin{lm}\label{L:vanish}
Suppose that $v\in\cV_\pi^{K_{n,m}}$ is a nonzero element. Then the integrals $\Psi_{n,r}(v\ot\xi_s)$ vanish 
for all $\xi_s\in I_r(\tau,s)$ if $\tau$ is not unramified. Furthermore, if $r=n$, $\tau$ is unramified and $W_v$ is 
identically zero on $T_n$, then the integrals $\Psi_{n,n}(v\ot\xi_s)$ also vanish for all $\xi_s\in I_n(\tau,s)$. 
\end{lm}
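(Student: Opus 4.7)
The plan is as follows. The key observation is that $\Psi_{n,r}$ is diagonally $H_r$-equivariant. Since $V_r$ is unimodular, the quotient measure on $V_r\backslash H_r$ is right $H_r$-invariant, so a direct change of variable in \eqref{E:RS int} establishes
\[
\Psi_{n,r}(\pi(h)v\otimes\rho_{\tau,s}(h)\xi_s)=\Psi_{n,r}(v\otimes\xi_s)
\]
for every $h\in H_r$. Because $R_{r,m}=H_r\cap K_{n,m}$ fixes $v$, integrating this identity over $R_{r,m}$ yields
\[
\Psi_{n,r}(v\otimes\xi_s)=\Psi_{n,r}\bigl(v\otimes\xi_s^{\#}\bigr),\qquad \xi_s^{\#}:=\int_{R_{r,m}}\rho_{\tau,s}(h)\xi_s\,dh\in I_r(\tau,s)^{R_{r,m}}.
\]
Thus both assertions of the lemma reduce to an analysis of the invariant space $I_r(\tau,s)^{R_{r,m}}$.

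Next, writing $m=e+2\ell$ with $e\in\{0,1\}$, I would note that $R_{r,m}$ is conjugate to the hyperspecial maximal compact $R_{r,e}$ by $\diag{\varpi^{\ell}I_r,\varpi^{-\ell}I_r}\in T_r$, and that this diagonal element is \emph{central} in the Siegel Levi $M_r=\widehat{\GL}_r(E)$. Consequently $R_{r,m}\cap M_r=\widehat{\GL}_r(\frak{o}_E)$ for every $m$, and $H_r=Q_rR_{r,m}$. Evaluation at the identity then gives an isomorphism
\[
I_r(\tau,s)^{R_{r,m}}\overset{\sim}{\longrightarrow}\cV_\tau^{\GL_r(\frak{o}_E)}.
\]
For part~(1), the right-hand side vanishes precisely when $\tau$ is not unramified, so $\xi_s^{\#}=0$ and the integral vanishes.

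For part~(2), the unramified hypothesis together with property~(i) of the Langlands-type representations recalled in the introduction makes $I_n(\tau,s)^{R_{n,m}}$ one-dimensional with generator $\xi_{\tau,s}^{m}$, so $\xi_s^{\#}$ is a scalar multiple of $\xi_{\tau,s}^{m}$ and it suffices to show $\Psi_{n,n}(v\otimes\xi_{\tau,s}^{m})=0$. Using $H_n=Q_nR_{n,m}$ together with the right $R_{n,m}$-invariance of both $W_v$ and $\xi_{\tau,s}^{m}$, the integral unfolds (up to a volume factor) to
\[
\int_{Z_n\backslash\GL_n(E)}W_v(\hat{b})\,\Lambda_{\tau,\b\psi}(\tau(b)v_\tau)\,|\det b|_E^{s-1/2}\delta_{Q_n}^{-1/2}(\hat{b})\,db.
\]
A further Iwasawa decomposition $b=ztk$ with $z\in Z_n$, $t\in A_n$, $k\in\GL_n(\frak{o}_E)$ gives $W_v(\hat{b})=\psi_{N_n}(\hat{z})W_v(\hat{t})$, because $\hat{z}\in N_n$ and $\hat{k}\in K_{n,m}$. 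Since $T_n=\hat{A}_n$ and $W_v$ vanishes identically on $T_n$ by hypothesis, the entire integrand is zero.

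The most delicate point is verifying the identifications for non-hyperspecial $R_{n,m}$, namely $H_n=Q_nR_{n,m}$ together with the one-dimensionality of $I_n(\tau,s)^{R_{n,m}}$; both rest on the fact that $R_{n,m}$ is a central conjugate of the hyperspecial $R_{n,e}$ inside $M_n$, so that the situation can be transported from the classical Iwasawa/Frobenius setup.
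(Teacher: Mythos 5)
Your proof is correct and follows essentially the same route as the paper: average $\xi_s$ over $R_{r,m}$ using diagonal $H_r$-equivariance, identify $I_r(\tau,s)^{R_{r,m}}\cong\cV_\tau^{\GL_r(\frak o_E)}$ via $H_r=Q_rR_{r,m}$ and $M_r\cap R_{r,m}=\widehat{\GL}_r(\frak o_E)$, and for $r=n$ unfold to an integral over the diagonal torus. One small remark: in part (2) you invoke property (i) of Langlands-type representations to get $\dim I_n(\tau,s)^{R_{n,m}}=1$, but the lemma is stated for general Whittaker-type $\tau$, and one-dimensionality is not actually needed — it suffices that $\xi_s^{\#}$ lies in $I_n(\tau,s)^{R_{n,m}}$, since the unfolded integral over $T_n$ then vanishes by the hypothesis on $W_v$ regardless of what $R_{n,m}$-fixed section $\xi_s^{\#}$ is.
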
  

\begin{proof}
The proof of this lemma is similar to that of \cite[Lemma 4.3]{YCheng2022}. Indeed, the decomposition 
$H_r=Q_rR_{r,m}$ and the fact $M_r\cap R_{r,m}$ give the isomorphism 
$
I_r(\tau,s)^{R_{r,m}}\cong\cV_\tau^{\GL_r(\frak{o}_E)}
$
between $\mathbb{C}$-linear spaces. Now if $\xi_s\in I_r(\tau,s)$, then the proof 
in loc. cit. implies that 
\[
\Psi_{n,r}(v\ot\xi_s)=\Psi_{n,r}(v\ot\xi'_s)
\]
for some $\xi'_s\in I_r(\tau,s)^{R_{r,m}}$. In particular, the integral $\Psi_{n,r}(v\ot\xi_s)=0$ if $\tau$ is not unramified.\\

Next suppose that $r=n$, $\tau$ is unramified and $W_v$ is identically zero on $T_n$. Then it follows from the 
Iwasawa decomposition $H_r=B_{H_n}R_{n,m}$ that
\[
\Psi_{n,n}(v\ot\xi_s)
=
\Psi_{n,n}(v\ot\xi'_s)
=
\int_{T_n} W_v\left(t\right)f_{\xi'_s}(t)\delta_{B_{H_n}}^{-1}(t)dt=0.
\]
Here $B_{H_n}=T_n\ltimes V_n$ stands for the upper triangular Borel subgroup  of $H_n$.
This concludes the proof.
\end{proof}

We also need the following lemma.

\begin{lm}\label{L:converge}
Let $\pi$ (resp. $\tau$) be an irreducible tempered generic representation of $G_n$ (resp. $\GL_r(E)$). Then the 
Rankin-Selberg integrals attached to $\pi$ and $\tau$ converge absolutely for $\Re(s)>0$.
\end{lm}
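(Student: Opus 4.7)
The plan is to adapt the standard Iwasawa-and-gauge convergence argument from \cite{JPSS1983} and \cite{Soudry1993}: temperedness of both $\pi$ and $\tau$ yields uniform gauge estimates for Whittaker functions restricted to the torus, which reduce absolute convergence to a finite product of scalar integrals whose critical half-plane is $\Re(s)>0$.

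First I would use the Iwasawa decomposition $H_r=V_rT_rR_{r,0}$ to rewrite
\[
|\Psi_{n,r}(v\otimes\xi_s)|\le \int_{R_{r,0}}\!\!\int_{T_r}\!\!\int_{{\rm Mat}_{(n-r)\times r}(E)} |W_v(\tilde x\, t(a)\, k)|\,|f_{\xi_s}(t(a)k)|\,\delta_{B_{H_r}}^{-1}(t(a))\,dx\,dt\,dk,
\]
where $t(a)=\diag{a_1,\ldots,a_r,\b a_r^{-1},\ldots,\b a_1^{-1}}$ for $a\in(E^\x)^r$. Since $R_{r,0}$ is compact, smoothness in $k$ reduces matters to a bound on the inner double integral that is uniform in $k$. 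From the induced-representation rule at $t(a)\in M_r$ and the definition of $\tau_s$,
\[
|f_{\xi_s}(t(a)k)|=\delta_{Q_r}^{1/2}(t(a))\,|a_1\cdots a_r|_E^{\Re(s)-1/2}\,|W_{\tau,k}(a)|,
\]
where $W_{\tau,k}(a):=\Lambda_{\tau,\b\psi_E}(\tau(\diag{a_1,\ldots,a_r})\xi_s(k))$ is a right translate of a Whittaker function of $\tau$ on the diagonal torus of $\GL_r(E)$.

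Next I handle the $\tilde x$-variable by conjugating through $t(a)$. A direct matrix computation using \eqref{E:X_n,r} and the embedding $H_r\hookrightarrow G_n$ shows that $t(a)^{-1}\tilde x\, t(a)=\tilde{x'}$ with $x'=x\cdot\diag{a_1,\ldots,a_r}$; the change of variables $x\mapsto x'$ produces a Jacobian $|a_1\cdots a_r|_E^{-(n-r)}$ and converts $W_v(\tilde x\, t(a)k)$ to $W_v(t(a)\,\tilde{x'}\,k)$. The integrand is thereby replaced by
\[
|W_v(t(a)\,\tilde{x'}\,k)|\cdot|W_{\tau,k}(a)|\cdot|a_1\cdots a_r|_E^{\Re(s)-1/2-(n-r)}\cdot\delta^{1/2}_{Q_r}(t(a))\,\delta_{B_{H_r}}^{-1}(t(a)).
\]

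The key input is the classical gauge estimate for tempered generic representations, going back to Casselman--Shalika and Casselman's asymptotic expansion. There exist a compact set $\Omega\subset{\rm Mat}_{(n-r)\times r}(E)$ and, for every $\varepsilon>0$, a constant $C_\varepsilon>0$ such that $W_v(t(a)\tilde{x'}k)$ vanishes unless $x'\in\Omega$ and $|\alpha_i(t(a))|_E\le 1$ for every simple root $\alpha_i$ of $B_n$, and on this support
\[
|W_v(t(a)\tilde{x'}k)|\le C_\varepsilon\,\delta_{B_n}^{1/2}(t(a))\prod_{i=1}^{n}|\alpha_i(t(a))|_E^{-\varepsilon},
\]
uniformly in $k\in R_{r,0}$ and $x'\in\Omega$; an analogous bound with $\delta_{B_{\GL_r}}^{1/2}$ and the simple roots of $\GL_r$ controls $|W_{\tau,k}(a)|$ by temperedness of $\tau$. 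Inserting both bounds, the remaining integral factors as a finite product of scalar integrals of the form $\int_{|b|_E\le 1}|b|_E^{A+\Re(s)-\varepsilon}\,d^\x b$. A bookkeeping using the explicit formulas for $\delta_{B_n}$, $\delta_{Q_r}$ and $\delta_{B_{H_r}}$ shows that every exponent $A$ vanishes, so the integrals converge for all small $\varepsilon>0$ precisely when $\Re(s)>0$.

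The main obstacle will be the exponent-bookkeeping in the final step: the factors $\delta_{B_n}^{1/2}$, $\delta_{Q_r}^{1/2}$, $\delta_{B_{H_r}}^{-1}$, the Jacobian $|a_1\cdots a_r|_E^{-(n-r)}$ and the shift $|a_1\cdots a_r|_E^{-1/2}$ each contribute to the exponent of each $|a_i|_E$, and one must check that they conspire to put the critical cutoff exactly at $\Re(s)=0$. Everything else is a routine application of tempered gauge estimates; in particular, no meromorphic continuation is needed, since the half-plane $\Re(s)>0$ is exactly where the raw integral converges.
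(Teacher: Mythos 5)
Your proposal follows essentially the same route as the paper: Iwasawa decomposition $H_r=V_rT_rR_{r,0}$, absorbing $R_{r,0}$ by smoothness, conjugating $\tilde{x}$ through the torus to pick up the Jacobian $|\det a|_E^{-(n-r)}$, appealing to temperedness for control of the Whittaker functions on the torus, and then bookkeeping of modular characters to land on the critical line $\Re(s)=0$. The only genuine divergence is cosmetic: the paper uses the Casselman--Shalika/Lapid--Mao \emph{asymptotic expansion} (writing $W(\hat y)$ as a finite sum $\delta_{B_n}^{1/2}(\hat y)\prod_j\varphi_j(y_j)\chi_j(y_j)$ with finite functions $\chi_j$, then invoking Waldspurger for tempered exponents), together with a clever parametrization $y=\diag{y_1\cdots y_r,\ldots,y_r}$ that collapses all the modular factors to $\prod_j|y_j|^{js}$ immediately, whereas you work with a gauge-type bound $\delta_{B_n}^{1/2}\prod_i|\alpha_i|^{-\varepsilon}$. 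These are equivalent for the purpose of this lemma.

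One point you should tighten: you attribute the compact support of $x'\mapsto W_v(t(a)\tilde{x'}k)$, \emph{uniformly in} $t(a)$, to ``the classical gauge estimate.'' That is not quite right. The gauge estimate governs growth of the Whittaker function restricted to the torus; the uniform compact support in the lower-unipotent variable is a separate (and nontrivial) fact, proved in \cite[Lemma 4.1]{Ben-ArtziSoudry2009}, which the paper cites for exactly this step. Without that lemma the assertion that the support $\Omega$ is independent of $a$ has no justification; for a fixed $a$ one gets compact support for free from smoothness of $v$, but the uniformity is the content of the cited result. Your final bookkeeping is also stated rather than carried out --- the paper's parametrization makes the cancellation of the modular factors transparent, whereas your ``every exponent $A$ vanishes'' is left as an unverified claim; doing it in the coordinates $y_j=a_j/a_{j+1}$ (and $y_r=a_r$) is the cleanest route, as the paper shows.
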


\begin{proof}
By the Iwasawa decomposition $H_r=B_{H_r}R_{r,0}$, the identification $\hat{A}_r=T_r$ and the right $R_{r,0}$-finiteness, 
we see that each Rankin-Selberg integral can be written as a finite sum of the integrals of the form
\[
\int_{A_r}\int_{{\rm Mat}_{(n-r)\x r}(E)}
W(\t{x}\hat{y})W'(y)\delta^{\frac{1}{2}}_{Q_r}(\hat{y})\delta_{B_{H_r}}^{-1}(\hat{y})|\det(y)|_E^{s-\frac{1}{2}}dxdy
\]
where $W$ (resp. $W'$) is the Whittaker function of an element in $\cV_\pi$ (resp. $\cV_\tau$) with respect to $\psi$
(resp. $\bar{\psi}$), and $B_{H_r}=T_r\ltimes V_r$ is the upper triangular Borel subgroup  of $H_r$. Since 
$\hat{y}^{-1}\t{x}\hat{y}=\widetilde{xy}$, we get from changing the variable that the integral becomes
\[
\int_{A_r}\int_{{\rm Mat}_{(n-r)\x r}(E)}
W(\hat{y}\t{x})W'(a)\delta^{\frac{1}{2}}_{Q_r}(\hat{y})\delta_{B_{H_r}}^{-1}(\hat{y})|\det(y)|_E^{s-\frac{1}{2}+r-n}dxdy.
\]
At this point we invoke \cite[Lemma 4.1]{Ben-ArtziSoudry2009}, which asserts that the function 
\[
x\mapsto W(\hat{y}\t{x})
\]
on ${\rm Mat}_{(n-r)\x r}(E)$ has compact support, which is independent of $y$. It follows that the integral can be 
further written as a finite sum of the integrals of the form (with possibly different $W$)
\[
\int_{A_r}
W(\hat{y})W'(y)\delta^{\frac{1}{2}}_{Q_r}(\hat{y})\delta_{B_{H_r}}^{-1}(\hat{y})|\det(y)|_E^{s-\frac{1}{2}+r-n}dy.
\]
To continue, we parametrize 
\[
y=\diag{y_1\cdots y_r, y_2\cdots y_r,\ldots, y_{r-1}y_r, y_r}
\]
for $y_1,\ldots, y_r$ in $E^\x$ and use asymptotic expansions of Whittaker functions 
(cf. \cite[Section 6]{CasselmanShalika1980}, \cite{LapidMao2009}) to express $W(\hat{y})$ (resp. $W'(\hat{y})$) as a 
finite sum of the functions of the form
\[
\delta_{B_n}^{\frac{1}{2}}(\hat{y})\prod_{j=1}^r\varphi_j(y_j)\chi_j(y_j)
\quad
\text{(resp. $\delta_{B_{\GL_r}}^{\frac{1}{2}}(\hat{y})\prod_{j=1}^r\varphi'_j(y_j)\chi'_j(y_j)$)}
\]
for some locally constant, compact support functions $\varphi_j, \varphi'_j$ on $E$ and finite functions $\chi_j, \chi'_j$ on 
$E^\x$. Here $B_{GL_r}=A_r\ltimes Z_r$ is the upper triangular Borel subgroup of $\GL_r(E)$. Since 
\[
\delta_{B_n}^{\frac{1}{2}}(\hat{y})
\delta_{B_{\GL_r}}^{\frac{1}{2}}(y)
\delta^{\frac{1}{2}}_{Q_r}(\hat{y})
\delta_{B_{H_r}}^{-1}(\hat{y})
|\det(y)|_E^{s-\frac{1}{2}+r-n}
=
|\det(y)|^s
=
\prod_{j=1}^r|y_j|_E^{js}
\]
we are now reducing to estimate the following integral
\[
\prod_{j=1}^r\int_{E^\x}\varphi_j(y_j)\varphi'_j(y_j)\chi_j(y_j)\chi'_j(y_j)|y_j|_E^{js}d^{\x}y_j
\]
which converges absolutely for $\Re(s)\gg 0$ by the temperedness assumption and a result of Waldspurger 
(cf. \cite[Proposition III.2.2]{Waldspurger2003}). This finishes the proof.
\end{proof}

\subsection{Unramified representations}\label{SS:Langlands type}
Because of \lmref{L:vanish}, we shall only consider unramified $\tau$. Then as in \cite{YCheng2022}, we 
assume that $\tau$ is an induced representation of $Langlands'$ $type$. This includes all unramified generic 
representations, but also reducible ones. To describe this type of representations, let 
$\ul{\alpha}=(\alpha_1, \alpha_2\ldots, \alpha_r)$ be an 
$r$-tuple of nonzero complex numbers. Then there is a unique unramified character $\chi_{\ul{\alpha}}$ of $A_r$ given by 
\[
\chi_{\underline{\alpha}}({\rm diag}(y_1, y_2,\ldots, y_r))
=
\alpha_1^{-{\rm log}_{q_E} |y_1|_E}\alpha_2^{-{\rm log}_{q_E}|y_2|_E}\cdots \alpha_r^{-{\rm log}_{q_E}|y_r|_E}.
\]
By extending $\chi_{\ul{\alpha}}$ to a character of the upper trianguler Borel subgroup $B_{\GL_r}=A_r\ltimes Z_r$ of 
$\GL_r(E)$, we can form a normalized induced representation $\tau_{\ul{\alpha}}$ of $\GL_r(E)$. This is an unramified 
representation of $\GL_r(E)$ whose $\GL_r(\frak{o}_E)$-fixed subspace is one-dimensional. Moreover, every unramified 
irreducible representation of $\GL_r(E)$ can be realized as a constituent of $\tau_{\ul{\alpha}}$ for some $\ul{\alpha}$. 
Note that $\tau^*_{\ul{\alpha}}=\tau_{\ul{\alpha}^*}$ with 
$\ul{\alpha}^*:=(\alpha^{-1}_r, \alpha^{-1}_{r-1},\ldots,\alpha_1^{-1})$.\\ 

An unramified representation $\tau$ of $\GL_r(E)$ is called an induced representation of Langlands' type if 
$\tau\cong\tau_{\ul{\alpha}}$ for some $\ul{\alpha}$ with
\begin{equation}\label{E:decreasing}
|\alpha_1|\le |\alpha_2|\le\cdots\le |\alpha_r|.
\end{equation}
These unramified representations may be reducible, but they have the following nice properties that allow us to work 
with: (i) The $\bbC$-linear space ${\rm Hom}_{Z_r}(\tau, \b{\psi}_{Z_r})$ is one-dimensional.
(ii) The intertwining map $u\mapsto W_u$ from $u\in\cV_\tau$ to its associated Whittaker function is an isomorphism
(cf. \cite{JacquetShalika1983},\cite[Lemma 2]{Jacquet2012}). (iii) $\tau^*$ is again induced of Langlands' type. 
We denote by $J(\tau)$ the unique irreducible quotient of $\tau$, which is unramified 
(cf. \cite[Corollary 1.2]{Matringe2013}) with the Satake parameters $\alpha_1,\alpha_2, \ldots,\alpha_r$.\\

Now let $\tau=\tau_{\ul{\alpha}}$ be an unramified representation of $\GL_r(E)$ that is an induced representation 
of Langlands' type. We denote by $\phi_{J(\tau)}$ the $L$-parameter of $J(\tau)$. Then the following lemma compares 
Langlands-Shahidi's local coefficients with arithmetic $\gamma$-factors.

\begin{lm}\label{L:LS=Gal}
We have $c_\delta(s,\tau,{\rm As},\psi_F)=\gamma(s,\phi_{J(\tau)},{\rm As},\psi_F)$.
\end{lm}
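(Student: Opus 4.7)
The plan is to verify the identity by matching explicit rank-one factors, reducing both sides through multiplicativity. Since $\tau=\tau_{\ul{\alpha}}$ is fully parabolically induced from the unramified characters $\alpha_1,\ldots,\alpha_r$ of $E^{\times}$ via the Borel subgroup of $\GL_r(E)$, I would apply Shahidi's multiplicativity of local coefficients to the Siegel parabolic $Q_r$ of $H_r=\U_{2r}$. This expresses $c_\delta(s,\tau,\mathrm{As},\psi_F)$ as a product indexed by the positive roots of the minimal parabolic lying outside $M_r$; these roots split into ``diagonal'' ones producing a rank-one Asai factor for each $\alpha_i$, and ``off-diagonal'' ones producing, for each pair $i<j$, a factor attached to the two-dimensional representation $\mathrm{Ind}_{E}^{F}(\alpha_i\boxtimes\alpha_j^c)$.

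On the Galois side, the $L$-parameter $\phi_{J(\tau)}$ is the direct sum $\alpha_1\oplus\cdots\oplus\alpha_r$ of unramified characters of $W_E$, and the standard decomposition
\[
\mathrm{As}(\phi_1\oplus\phi_2)\cong\mathrm{As}(\phi_1)\oplus\mathrm{As}(\phi_2)\oplus\mathrm{Ind}_E^F(\phi_1\otimes\phi_2^c)
\]
from \cite[pp.~20-21]{GRS2011}, iterated over the $r$ summands and combined with additivity of $\gamma$-factors under direct sums and Deligne's inductivity of $\gamma$-factors, produces a factorization of $\gamma(s,\phi_{J(\tau)},\mathrm{As},\psi_F)$ exactly parallel to the automorphic one.

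It therefore suffices to match the two factorizations term by term in the rank-one setting: one must verify $c_\delta(s,\alpha,\mathrm{As},\psi_F)=\gamma(s,\mathrm{As}(\alpha),\psi_F)$ for an unramified character $\alpha$ of $E^{\times}$, together with the analogous identity for the off-diagonal contributions, which via Deligne's inductivity reduces to a $\GL_1$-identity over $E$ for $\alpha_i\alpha_j^c$ with respect to $\psi_E$. Both sides of each rank-one identity are explicit rational functions in $q_E^{-s}$ computable from unramified data, the automorphic side by the Casselman-Shahidi evaluation of the intertwining integral over the relevant one-dimensional root group, and the arithmetic side by Tate's thesis.

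The main obstacle will be controlling the normalization constants --- in particular the chosen $\delta\in\mathfrak{o}_E^{\times}$ with $\bar\delta=-\delta$, the unramified $\psi_F$, and the splitting of $E/F$ used to define $\mathrm{As}$ --- to rule out a spurious exponential factor. This is precisely the normalization subtlety flagged in the fourth remark following \thmref{T:main2}; in the fully unramified setting the $\varepsilon$-factors on both sides are trivial, so with compatible normalizations the equality should hold on the nose.
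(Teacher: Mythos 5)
Your plan coincides with the paper's proof: the paper appeals exactly to multiplicativity of the Shahidi local coefficient and of the Galois $\gamma$-factor, together with the unramified choice of $\delta$ and $\psi_F$ to kill the potential exponential discrepancy, citing Shankman's Proposition 2.3.1 for the details. Your expansion of the root-decomposition on the automorphic side, the Asai direct-sum decomposition on the Galois side, and the observation that the $\varepsilon$-factors are trivial in the everywhere-unramified situation is precisely what fills in that one-line argument.
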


\begin{proof}
This follows from the multiplicativity of $c_\delta(s,\tau,{\rm As}, \psi_F)$ and $\gamma(s,\phi_{J(\tau)},{\rm As},\psi_F)$, as 
well as the choice of $\delta$ and $\psi_F$ (cf. \cite[Proposition 2.3.1]{Shankman}).
\end{proof}

To state the next lemma, we fix a non-zero element $v_\tau\in\cV_\tau^{\GL_r(\frak{o}_E)}$. Then the 
decomposition $H_r=Q_rR_{r,m}$ implies that the space $I_r(\tau,s)^{R_{r,m}}$ is one-dimensional, so we can 
fix a basis $\xi_{\tau,s}^m$ with $\xi_{\tau,s}^m(I_{2r})=v_\tau$.

\begin{lm}\label{L:GK method for m}
We have 
\[
M(\tau, s)\xi^m_{\tau,s}
=
\omega_{\tau_s}(\varpi)^m\cdot \frac{L(2s-1,\phi_{J(\tau)},{\rm As})}{L(2s, \phi_{J(\tau)}, {\rm As})}\cdot
\xi^m_{\tau^*,1-s}
\]
where $\omega_{\tau_s}$ is the central character of $\tau_s$.
\end{lm}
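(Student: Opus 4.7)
The strategy is to reduce, uniformly in $m\ge 0$, the computation of the intertwining operator on $\xi^m_{\tau,s}$ to the classical Gindikin--Karpelevi\v{c} formula on the spherical section $\xi^0_{\tau,s}$, using a similitude conjugation trick. The key observation is that although $R_{r,0}$ and $R_{r,m}$ are in general non-conjugate maximal compact subgroups of $H_r$, they become conjugate inside the similitude unitary group $\mathrm{GU}_{2r}(F)$ by a single simple element, and this similitude conjugation produces exactly the extra central-character factor $\omega_{\tau_s}(\varpi)^m$ appearing in the formula.

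First I will record that, since $R_{r,m}$ is conjugate to $R_{r,e}$ (with $m=e+2\ell$) by an element of $M_r$, the Iwasawa decomposition $H_r=Q_rR_{r,m}$ holds for every $m$; hence $I_r(\tau^*,1-s)^{R_{r,m}}$ is one-dimensional and the $H_r$-equivariance of $M(\tau,s)$ forces $M(\tau,s)\xi^m_{\tau,s}=c(m,s)\xi^m_{\tau^*,1-s}$ for some scalar $c(m,s)$ to be identified. Next, I introduce the similitude element $\gamma=\diag{I_r,\varpi I_r}\in\mathrm{GU}_{2r}(F)$ with similitude factor $\varpi$, and check by direct matrix computation that $\gamma^mR_{r,0}\gamma^{-m}=R_{r,m}$, that $\gamma$ commutes with $M_r$ and normalizes $Y_r$ (with $\mathrm{Ad}(\gamma^{-1})$ scaling the upper-right block by $\varpi$), and that similitude conjugation preserves $H_r$. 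The section $h\mapsto\xi^0_{\tau,s}(\gamma^{-m}h\gamma^m)$ then lies in $I_r(\tau,s)$, is right $R_{r,m}$-invariant, and equals $v_\tau$ at $I_{2r}$, so by the one-dimensionality just established it must coincide with $\xi^m_{\tau,s}$; the same identity holds with $(\tau,s)$ replaced by $(\tau^*,1-s)$.

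The core computation is then to apply $M(\tau,s)$ to the identity $\xi^m_{\tau,s}(h)=\xi^0_{\tau,s}(\gamma^{-m}h\gamma^m)$ and change variables $u=\gamma^mu'\gamma^{-m}$ in the intertwining integral. Rewriting $\gamma^{-m}w_r^{-1}\gamma^m = s_{r,m}w_r^{-1}$ with $s_{r,m}=\diag{\varpi^mI_r,\varpi^{-m}I_r}\in M_r$, the $Q_r$-covariance of $\xi^0_{\tau,s}$ extracts the factor $\delta_{Q_r}^{1/2}(s_{r,m})\omega_{\tau_s}(\varpi)^m$, while the Jacobian of the $Y_r$-conjugation contributes $q^{mr^2}$; the identity $\delta_{Q_r}(\diag{a,a^*})=|\det a|_E^r$ gives $\delta_{Q_r}(s_{r,m})=q^{-2mr^2}$, so these measure-theoretic factors cancel exactly. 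Reapplying the similitude identity to the right-hand side converts $\xi^0_{\tau^*,1-s}(\gamma^{-m}h\gamma^m)$ back into $\xi^m_{\tau^*,1-s}(h)$, and the lemma is thereby reduced to the $m=0$ case, which is the classical Gindikin--Karpelevi\v{c} formula on the Siegel parabolic of $U(r,r)$ producing the coefficient $L(2s-1,\phi_{J(\tau)},\mathrm{As})/L(2s,\phi_{J(\tau)},\mathrm{As})$.

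The main obstacle is the careful bookkeeping of the three distinct factors that appear in the final computation---the Jacobian of the $Y_r$-conjugation, the modulus character $\delta_{Q_r}(s_{r,m})$, and the central character action $\tau_s(\varpi^mI_r)=\omega_{\tau_s}(\varpi)^m$---and verifying that the first two cancel exactly so that the only surviving coefficient is $\omega_{\tau_s}(\varpi)^m$. This cancellation is ultimately a reflection of the fact that $\gamma$ is a similitude with factor precisely $\varpi$, and it is the conceptual heart of the lemma.
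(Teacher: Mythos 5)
Your proof is correct and uses the same similitude-conjugation device that underlies the paper's cited reference: indeed, the paper's own proof of Lemma~\ref{L:same vol} introduces the element $t_{r,m}=\diag{\varpi^mI_r,I_r}\in\mathrm{GU}_{2r}(F)$, which is a central twist of your $\gamma^{-m}$ and implements exactly the conjugation $R_{r,m}\to R_{r,0}$ you use, so your reduction to the $m=0$ Gindikin--Karpelevi\v{c} formula via the identity $\xi^m_{\tau,s}(h)=\xi^0_{\tau,s}(\gamma^{-m}h\gamma^m)$, with the Jacobian $q_F^{mr^2}$ cancelling $\delta_{Q_r}^{1/2}(s_{r,m})=q_F^{-mr^2}$ and the surviving factor $\tau_s(\varpi^mI_r)=\omega_{\tau_s}(\varpi)^m$, is the intended argument.
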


\begin{proof}
The proof of this lemma is similar to that of \cite[Lemma 6.4]{YCheng2022}. Indeed, it is slightly simpler 
then the one in loc. cit. as we do have additional Weyl elements here.
\end{proof}

\section{Rankin-Selberg integrals attached newforms and oldforms}\label{S:proof of main2}
We shall prove \thmref{T:main2} in this section. Let us begin with the setups.

\subsection{Setups}\label{SS:setup}
As before, let $\pi$ be an irreducible generic representation of $G_n$ with the associated $L$-parameter 
$\phi_\pi:WD_E\to\GL_{2n+1}(\mathbb{C})$.  Fix a non-zero $\Lambda_{\pi,\psi_E}\in{\rm Hom}_{N_n}(\pi,\psi_{N_n})$
and a newform $v_\pi\in\cV_\pi^{K_{n,a_\pi}}$. Let $\ul{\alpha}=(\alpha_1,\alpha_2,\ldots,\alpha_r)$ be a $r$-tuple of 
non-zero complex numbers and $\ul{\dot{\alpha}}$ be its rearrangement so that \eqref{E:decreasing} holds. 
Let $\tau=\tau_{\ul{\dot{\alpha}}}$, which is an unramified representation of $\GL_r(F)$ that is an induced representation 
of Langlands' type (cf. \S\ref{SS:Langlands type}). Let $J(\tau)$ be the unique unramified quotient of $\tau$ and 
$\phi_{J(\tau)}$ be the $L$-parameter of $J(\tau)$. Fix non-zero elements $v_\tau\in\cV_\tau^{\GL_r(\frak{o})}$ and 
$\Lambda_{\tau,\b{\psi}_E}\in{\rm Hom}_{Z_r}(\tau,\b{\psi}_{Z_r})$. Since $\psi_E$ is unramified, we may assume 
\[
\Lambda_{\tau,\b{\psi}_E}(v_\tau)=1
\]
(cf. \cite{Shintani1976}, \cite{Jacquet2012}). Let $\xi^m_{\tau,s}$ be the basis of 
the one-dimensional space $I_r(\tau,s)^{R_{r,m}}$ with $\xi_{\tau,s}^m(I_{2r})=v_\tau$.\\ 

The Haar measures appeared in the Rankin-Selberg integrals are chosen as follows.
First, we take $dx$ to be the Haar measure on ${\rm Mat}_{(n-r)\x r}(E)$ with 
${\rm vol}({\rm Mat}_{(n-r)\x r}(\frak{o}_E),dx)=1$.
On the other hand, the Haar measures $dt$ on $T_r$ and $dk$ on $R_{r,m}$ are chosen so that 
${\rm vol}(T_r\cap R_{r,m},dt)={\rm vol}(R_{r,m},dk)=1$.
Note that $T_r\cap R_{r,m}$ is independent of $m$. Then the quotient measure $dh$ on $V_r\backslash H_r$ is given by 
\[
\int_{V_r\backslash H_r}f(h)dh
=
\int_{T_r}\int_{R_{r,m}}f(tk)\delta_r^{-1}(t)dtdk
\]
where $\delta_r$ denotes the modulus function of the upper triangular Borel subgroup $T_r\ltimes V_r$ of $H_r$.
We indicate that \lmref{L:same vol} implies that $dh$ does not depend on $m$.

\subsection{Preliminaries}\label{SS:pre2}
Let $v\in\cV_\pi^{K_{n,m}}$ and $W_v$ be the associated Whittaker function with respect to 
$\psi_E$. Then since $W_v$ is right $K_{n,m}$-invariant, its not hard to see that the support of $W_v$ when restricting to 
$T_n$ is contained in 
\[
\stt{{\rm diag}(y_1,\ldots, y_n, 1, \bar{y}_n^{-1},\ldots,\bar{y}_1^{-1})\mid 0<|y_1|_E\le |y_2|_E\le\cdots\le |y_n|_E\le 1}.
\]
In the following two lemmas, we will reveal more properties of $W_v$.

\begin{lm}\label{L:supp of W_v}
Let $a\in\GL_r(F)$ and $x\in{\rm Mat}_{(n-r)\x r}(E)\setminus{\rm Mat}_{(n-r)\x r}(\frak{o}_E)$ with $r<n$.
Then we have $W_v(\hat{a}\tilde{x})=0$.
\end{lm}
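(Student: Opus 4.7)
The plan is to combine the left $(N_n,\psi_{N_n})$-equivariance of $W_v$ with its right $K_{n,m}$-invariance. Concretely, one seeks an element $u \in N_n$ with $\psi_{N_n}(u) \ne 1$ together with $k \in K_{n,m}$ satisfying $u\, \hat a \tilde x = \hat a \tilde x\, k$; then
\[
\psi_{N_n}(u)\, W_v(\hat a \tilde x) = W_v(u \hat a \tilde x) = W_v(\hat a \tilde x\, k) = W_v(\hat a \tilde x),
\]
which forces $W_v(\hat a \tilde x) = 0$. Thus the proof reduces to showing that $\psi_{N_n}$ is non-trivial on $N_n \cap (\hat a \tilde x) K_{n,m} (\hat a \tilde x)^{-1}$ whenever $x$ has a non-integral entry.

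First, since the block shape \eqref{E:K_n,m} manifestly contains $\widehat{\GL_r(\mathfrak{o}_E)}$ and the conjugation identity $\hat k_0^{-1} \tilde x \hat k_0 = \widetilde{x k_0}$ holds for every $k_0 \in \GL_r(\mathfrak{o}_E)$ (a direct computation parallel to $\hat a \tilde x \hat a^{-1} = \widetilde{x a^{-1}}$), the right $K_{n,m}$-invariance lets $\GL_r(\mathfrak{o}_E)$ act on the pair $(a,x)$ via $(a,x) \mapsto (a k_0, x k_0)$. Combined with the Hermite normal form for $x$ over $\mathfrak{o}_E$, this reduces the assertion to the case where $x$ has a single non-integral entry, of valuation $-t<0$; an additional conjugation by Weyl elements in $\sW_0 \subseteq K_{n,m}$ moves this entry to a convenient position such as $(n-r,r)$.

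Next, I would take $u = \chi_{\e_r - \e_{r+1}}(y)$, the simple root element crossing the Levi boundary of $P_{n,r}$, and compute the conjugate $\tilde x^{-1} \hat a^{-1} u \hat a \tilde x$ by applying the commutator relations in the root system of $\U_{2n+1}$. The resulting product of root group elements has coefficients that are polynomial in $y$, in $\varpi^{-t}$, and in the entries of $a^{-1}$. Requiring this product to lie in $K_{n,m}$ imposes a family of valuation inequalities on $y$; for $t$ sufficiently large relative to $a$ and $m$, these still admit solutions with $v_E(y) < 0$, so that $\psi_{N_n}(u) = \psi_E(y) \ne 1$.

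The main obstacle is the regime when $a$ is ``large'' in valuation, for which the direct construction forces $y \in \mathfrak{o}_E$ and hence $\psi_{N_n}(u) = 1$. I expect to overcome this by enlarging $u$ to a product $u = \chi_{\e_r - \e_{r+1}}(y) \cdot u'$ with $u' \in N_n$ built from non-simple root groups such as $\chi_{\e_i + \e_j}$ or $\chi_{\e_k}$, chosen so that (i) the conjugate of $u'$ absorbs the ``Levi-error'' coming from the conjugate of $\chi_{\e_r - \e_{r+1}}(y)$, thereby loosening the admissible range of $y$, and (ii) $\psi_{N_n}(u')$ is trivial so that $\psi_{N_n}(u) = \psi_E(y) \ne 1$ is preserved. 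This augmentation is in the same spirit as the commutator manipulations of \cite{YCheng2022} and Miyauchi's work on $\U_{2,1}$, and carrying it out reduces to a careful but routine calculation with root group commutators in $G_n$.
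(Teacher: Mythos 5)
Your high-level strategy (find $u\in N_n$ with $\psi_{N_n}(u)\ne1$ and $k\in K_{n,m}$ with $u\hat a\tilde x=\hat a\tilde x k$, i.e.\ $(\hat a\tilde x)^{-1}u(\hat a\tilde x)\in K_{n,m}$) is the right one, and the observation that $\GL_r(\frak o_E)$ acts on the data $(a,x)$ via right translation by $K_{n,m}$ is correct. But there is a genuine gap in the central step, and the proposed fix does not repair it.

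Work out the constraint explicitly: for $u=\chi_{\e_r-\e_{r+1}}(y)$ one has $(\hat a\tilde x)^{-1}u(\hat a\tilde x)=\tilde x^{-1}\chi_{\e_r-\e_{r+1}}(a_r^{-1}y)\tilde x$, whose $\GL_n(\frak o_E)$-block entries include a term of size $x_{1,r}^2\,a_r^{-1}y$ (the lower-left entry of the $2\times2$ cell on $\{e_r,e_{r+1}\}$). Requiring this to lie in $\frak o_E$ gives $\operatorname{ord}(y)\ge 2t+\operatorname{ord}(a_r)$, where $-t=\operatorname{ord}(x_{1,r})<0$. For $\psi_E(y)\ne1$ one needs $\operatorname{ord}(y)<0$, which is incompatible whenever $\operatorname{ord}(a_r)\ge -2t$ --- in particular it fails for every integral $a$, including $a=I_r$. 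So the direction in your sketch is reversed: the constraint on $y$ gets \emph{harder}, not easier, as $t$ grows when $a$ is fixed. The suggested augmentation by $\chi_{\e_i+\e_j}$ or $\chi_{\e_k}$ cannot absorb this: conjugating those roots by $\hat a\tilde x$ produces entries only in the $(1,2)$ and $(1,3)$ blocks of $K_{n,m}$, while the $x^2y$-obstruction sits in the $(1,1)$ $\GL_n(\frak o_E)$ block. Since conjugation is a group homomorphism, multiplying $u$ by such $u'$ leaves the $(1,1)$-block of the conjugate unchanged, so nothing is cancelled.

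A route that closes the gap cleanly is to use the identity $\hat a\tilde x=\widehat{g}$ with $g=\begin{pmatrix}a&0\\ x&I_{n-r}\end{pmatrix}\in\GL_n(E)$ (which follows from $\tilde x=\widehat{\bigl(\begin{smallmatrix}I_r&0\\ x&I_{n-r}\end{smallmatrix}\bigr)}$), together with the fact that $\widehat{\GL_n(\frak o_E)}\subset K_{n,m}$ and $\widehat{Z_n}\subset N_n$. Taking an Iwasawa decomposition $g=ztk$ with $z\in Z_n$, $t\in A_n$, $k\in\GL_n(\frak o_E)$, one gets $W_v(\hat a\tilde x)=\psi_{Z_n}(z)\,W_v(\hat t)$, and $W_v(\hat t)$ vanishes unless $\operatorname{ord}(t_1)\ge\cdots\ge\operatorname{ord}(t_n)\ge0$ (this is the torus support condition, coming precisely from the roots $\chi_{\e_i-\e_{i+1}}$ and $\chi_{\e_n}$, the roots the paper refers to). The fractional ideals $t_{n-j}\cdots t_n\frak o_E$ are generated by the $(j+1)\times(j+1)$ minors of the last $j+1$ rows of $g$; since these rows are $(x,I_{n-r})$ padded above by a row of $a$, the ideals contain $1$ and the entries of $x$ in the corresponding rows. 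A short induction on $j$ then shows that when some entry of $x$ is non-integral, the sequence $\operatorname{ord}(t_i)$ fails to be decreasing nonnegative, so $W_v(\hat t)=0$. This is valid for every $a$ and requires no column reduction or case split on the size of $a$.
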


\begin{proof}
We refer to \S\ref{SSS:embed} and \eqref{E:X_n,r} for the notation of $\hat{a}$ and $\tilde{x}$ respectively.
The proof of this lemma the same as \cite[Lemma 6.5]{YCheng2022}. Observe that the matrices used in loc. cit. 
correspond to the root matrices $\chi_{\e_k-\e_{\ell+r+1}}(y)$ and $\chi_{\e_k}(y)$ here.
\end{proof}

The proof of the next lemma is inspired by the proof of \cite[Theorem 4.4]{AtobeOiYasuda} which is one of the keys 
for showing that $\Lambda_{\pi,\psi_E}(v_\pi)\ne 0$ when $\pi$ is tempered.

\begin{lm}\label{L:key lemma}
Suppose that $\pi$ is tempered and $v$ is non-zero. Then $W_v$ can not be identically zero on $T_n$.
\end{lm}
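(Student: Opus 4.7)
The plan is a proof by contradiction in the spirit of the proof of Theorem 4.4 in \cite{AtobeOiYasuda}. Suppose, toward contradiction, that for some nonzero $v \in \cV_\pi^{K_{n,m}}$ the Whittaker function $W_v$ vanishes identically on $T_n$. Writing $m = e + 2\ell$ with $e \in \{0,1\}$ and $\ell \ge 0$, I first replace $v$ by $\pi(t_\ell) v \in \cV_\pi^{K_{n,m}^0}$; this new vector is fixed by $R_{n,e} = K_{n,m}^0 \cap H_n$, which is a hyperspecial maximal compact subgroup of $H_n$, and the hypothesis of vanishing on $T_n$ is preserved up to an inner torus translation. Using the Iwasawa decomposition $H_n = V_n T_n R_{n,e}$ together with the left $(V_n,\psi_{V_n})$-equivariance and right $R_{n,e}$-invariance of $W_v|_{H_n}$, I read off $W_v(v_0 t k) = \psi_{V_n}(v_0)W_v(t)$ for every $v_0 \in V_n$, $t \in T_n$, $k \in R_{n,e}$, so that $W_v$ vanishes identically on all of $H_n$.

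Next, I invoke the local Gan-Gross-Prasad conjecture for the codimension-one unitary pair $(G_n, H_n) = (\U_{2n+1}, \U_{2n})$ established by Beuzart-Plessis. Since $\pi$ is irreducible tempered generic, the conjecture produces an irreducible tempered generic representation $\sigma$ of $H_n$ with $\dim \Hom_{H_n}(\pi, \sigma^\vee) = 1$; the unique (up to scalars) nonzero $H_n$-invariant pairing is realized as the Bessel period
\[
\beta(u, w) \;=\; \int_{V_n \backslash H_n} W_u(h) W_w(h) \, dh,
\]
absolutely convergent by \lmref{L:converge}. Since $W_v \equiv 0$ on $H_n$, I obtain $\beta(v, w) = 0$ for every $w \in \cV_\sigma$, so $v$ lies in the kernel of the nonzero $H_n$-equivariant map $u \mapsto \beta(u, \cdot)$ from $\pi$ to $\sigma^\vee$.

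The final step---and the main obstacle---is to upgrade $\beta(v, \cdot) \equiv 0$ to $v = 0$. The kernel here is an $H_n$-submodule of $\pi$ which may well be nontrivial, since $\pi|_{H_n}$ is generally reducible; the resolution is to exploit the $R_{n,e}$-invariance of $v$. Specifically, I would arrange $\sigma$ so that $\cV_\sigma^{R_{n,e}} \ne 0$ (possible by compatibility of the GGP matching with unramified Satake parameters) and test $\beta$ against the spherical vector $w_\sigma \in \cV_\sigma^{R_{n,e}}$; the same Iwasawa reduction rewrites $\beta(v, w_\sigma)$ as an integral over $T_n$ of $W_v \cdot W_{w_\sigma} \cdot \delta_{B_{H_n}}^{-1}$, which an unramified computation---the $K_{n,m}^0$-analogue of the fundamental-lemma identity underlying \cite[Theorem 4.4]{AtobeOiYasuda}---identifies with a nonzero spectral reconstruction of $v$ via the spherical Hecke action of $\sH(H_n//R_{n,e})$ on $\cV_\pi^{R_{n,e}}$, yielding the desired contradiction.
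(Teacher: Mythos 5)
Your plan takes a genuinely different route from the paper, and the difference is where the gap lives. You work entirely on the Whittaker side: deduce $W_v \equiv 0$ on $H_n$ from $W_v \equiv 0$ on $T_n$ via the Iwasawa decomposition (that step is fine, and is essentially what \lmref{L:vanish} exploits), then try to extract $v=0$ from the vanishing of the Bessel period $\beta(v,\cdot)$ against a GGP-distinguished $\sigma$. As you yourself note, the kernel of $u\mapsto\beta(u,\cdot)$ is a proper $H_n$-submodule of $\pi|_{H_n}$, so $\beta(v,\cdot)=0$ by itself says nothing about $v$. Your proposed repair has two genuine problems. First, you cannot ``arrange'' the GGP-distinguished $\sigma$ to be unramified: the member of the Vogan $L$-packet of $H_n$ singled out by the GGP recipe is determined by local root numbers, and for ramified $\pi$ there is no reason whatsoever for it to possess an $R_{n,e}$-fixed vector. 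Second, even if $\sigma$ were unramified and $\beta(v,w_\sigma)=0$, there is no ``spectral reconstruction'' that converts this into $v=0$; the spherical Hecke action on $\cV_\pi^{R_{n,e}}$ does not make $\beta(\cdot,w_\sigma)$ injective, and the phrase ``$K_{n,m}^0$-analogue of the fundamental-lemma identity'' does not supply such an implication. The final paragraph is a hope rather than an argument.

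The paper avoids this trap by running the argument through \emph{matrix coefficients} rather than Whittaker functions. The seed of nonvanishing is $f_v(1)=\langle v,v\rangle_\pi\ne 0$, which is manifestly nonzero as soon as $v\ne 0$; a Plancherel-type statement (the proof of \cite[Lemma 12.5]{GanSavin2012}) then produces an irreducible tempered $\pi'$ of $H_n$ and a matrix coefficient $f'$ with $\int_{H_n} f_v f'\,dh\ne 0$. Because $f_v$ is bi-$R_{n,m}$-invariant and $R_{n,m}$ is hyperspecial in $H_n$, the $\pi'$ that appears is \emph{forced} to be unramified (otherwise averaging $f'$ over $R_{n,m}$ kills the integral), hence $\pi'\cong\rho_{\tau,1/2}$ for some unramified tempered $\tau$ of $\GL_n(E)$. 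Multiplicity one (\cite{AGRS2010}) then identifies the matrix-coefficient pairing, up to a nonzero scalar, with $\Psi_{n,n}(\,\cdot\,\otimes\,\cdot\,)$ at $s=1/2$, and \lmref{L:vanish} forces the latter to vanish on $v\otimes\xi^m_{\tau,1/2}$ when $W_v\equiv 0$ on $T_n$ — a contradiction. This reversal, starting from the nonvanishing of the Hermitian pairing instead of trying to conjure nonvanishing of a Bessel period, is exactly what your proposal is missing and why the unramifiedness of the auxiliary representation comes for free there but must be smuggled in (illegitimately) in yours.
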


\begin{proof}
The proof is by contradiction. So let us suppose that $W_v$ is identically zero on $T_n$. Let 
$\langle\cdot,\cdot\rangle_\pi$ be an $G_n$-equivalent Hermitian pairing on $\cV_\pi$ and 
$f_v(g)=\langle \pi(g)v,v\rangle_\pi$ be a matrix coefficient of $\pi$. Since $v\ne 0$, we have $f_v(I_{2n+1})\ne 0$.
Then from the proof of \cite[Lemma 12.5]{GanSavin2012}), there exists an 
irreducible tempered representation $\pi'$ of $H_n$ and a matrix coefficient $f'$ of $\pi'$ such that 
\begin{equation}\label{E:nonzero}
\int_{H_n} f_v(h)f'(h)dh\ne 0.
\end{equation}
Moreover, as argued in the proof of \cite[Theorem 4.4]{AtobeOiYasuda}, this $\pi'$ must be unramified and $f'$ can be 
chosen to be bi-$R_{n,m}$-invariant. It follows that $\pi'=\rho_{\tau,1/2}$ (cf. \S\ref{SS:ind rep}) for some irreducible
unramified tempered representation $\tau$ of $\GL_n(E)$. Note that $\tau$ is necessarily generic and we may 
assume $f'(h)=\langle\pi'(h)\xi_{\tau,1/2}^m, \xi^m_{\tau,1/2}\rangle_{\pi'}$, where $\langle\cdot,\cdot\rangle_{\pi'}$ stands 
for an $H_n$-equivalent Hermitian pairing on $I_n(\tau,1/2)$.\\

Now the proof follows immediately from \lmref{L:vanish} and \lmref{L:converge}. Indeed, \eqref{E:nonzero} 
implies 
\[
v'\ot\xi_{1/2}\mapsto \int_{H_n}\langle \pi(h)v',v\rangle_\pi\langle\pi'(h)\xi_{1/2}, \xi_{\tau,1/2}^m\rangle_{\pi'}dh
\]
defines a non-zero element in ${\rm Hom}_{H_n}(\pi\ot\pi',\mathbb{C})$. On the other hand, by 
\lmref{L:converge} and \cite[Proposition 4.9]{Ben-ArtziSoudry2009}, 
\[
v'\ot\xi_{1/2}\mapsto \Psi_{n,n}(v'\ot\xi_{1/2})
\]
also defines a non-zero element in the same Hom space. It then follows from the multiplicity one result 
(cf. \cite{AGRS2010}) and \lmref{L:vanish} that
\[
\int_{H_n}f_v(h)f'(h)dh=c\Psi_{n,n}(v\ot\xi_{\tau,1/2})=0
\]
where $c$ is a non-zero constant. This contradicts to \eqref{E:nonzero}; hence the result.
\end{proof}

The following proposition, whose proof is similar to that of \cite[Proposition 6.7]{YCheng2022}, is the key for computing 
the Rankin-Selberg integrals and is where we need the Assumption \ref{H}, namely, we assume that 
\[
\gamma_\delta(s,\pi\x\tau,\psi_F)=\gamma(s,\phi_\pi\ot\phi_{J(\tau)},\psi_E).
\]
To describe the proposition, let $\cS_r$ be the $\bbC$-algebra of symmetric polynomials in 
\[
(X_1, X_1^{-1}, X_2, X_2^{-1},\ldots, X_r, X_r^{-1})
\]
and $\cS_r^0$ be its subalgebra consisting of elements $f$ satisfying
\[
f(X_1,\ldots, X^{-1}_j,\ldots, X_r)
=
f(X_1,\ldots, X_j,\ldots, X_r)
\]
for $1\le j\le r$. For each $m$, we have the algebra isomorphism (Satake isomorphism)
\[
\sS^0_{r,m}: \sH(H_r//R_{r,m})\overset{\varsigma_{r,m}^0}{\longto}\sH(T_r//T_r\cap R_{r,m})^{\sW_{H_r}}
\cong\bbC[A_{r,\bbC}]^{\frak{S}_r\ltimes(\bbZ/2\bbZ)^{r}}=\cS_r^0
\]
where $A_{r,\bbC}$ is the diagonal torus of $\GL_r(\bbC)$ and
\[
\varsigma^0_{r,m}(\varphi)(t)
=
\delta^{\frac{1}{2}}_{B_{H_r}}(t)\int_{V_r}\varphi(tv)dv
\quad
({\rm vol}(V_r(\frak{o}),dv)=1).
\]
Finally, let $\sS_{r,m}:\sH(H_r//R_{r,m})\to\cS_r$ be the (injective) algebra homomorphism obtained by composing 
$\sS^0_{r,m}$ with the natural embedding $\cS_r^0\hookto\cS_r$. Now the proposition can be stated as follows.

\begin{prop}\label{P:main prop}
There exist linear maps
\[
\Xi^m_{n,r}: \cV_{\pi}^{K}\longto\cS_r;
\quad
v\mapsto\Xi^m_{n,r}(v;X_1,\ldots,X_r)
\]
where $K=K_{n,m}$ if $r<n$ and $K=R_{n,m}$ if $r=n$, such that the followings are satisfied:\\

\begin{itemize}
\item[(1)]
We have
\[
\Xi^m_{n,r}(v;q_E^{-s+1/2}\alpha_1, \ldots,q_E^{-s+1/2}\alpha_r)
=
\frac{L(2s,\phi_{J(\tau)},{\rm As})\Psi_{n,r}(v\ot \xi^m_{\tau,s})}{L(s,\phi_\pi\ot\phi_{J(\tau)})}
\]
for every $s\in\bbC$.
\item[(2)]
The functional equation
\[
\Xi^m_{n,r}(v; X_1^{-1}, \ldots, X_r^{-1})=(X_1\cdots X_r)^{(a_\pi-m)}
\Xi^m_{n,r}(v;X_1,\ldots,X_r)
\]
holds.
\item[(3)]
The kernel of $\Xi^m_{n,r}$ is given by 
\[
ker(\Xi^m_{n,r})=\stt{v\in \cV_\pi^K\mid \text{$W_v(t)=0$ for every $t\in T_r$}}.
\]
\item[(4)]
The relation 
\[
\Xi^m_{n,r}(v;X_1, \ldots,X_{r-1},0)=\Xi^m_{n,r-1}(v;X_1,\ldots,X_{r-1})
\]
holds for $v\in\cV_\pi^{K_{n,m}}$ and $2\leq r\leq n$.
\item[(5)]
We have
\[
\Xi^m_{n,n}(\varphi\star v;X_1,\ldots, X_n)
=\sS_{n,m}(\varphi)\cdot \Xi^m_{n,n}(v; X_1,\ldots, X_n)
\]
for $\varphi\in\sH(H_n//R_{n,m})$. 
\end{itemize}
\end{prop}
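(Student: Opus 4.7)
The plan is to follow the strategy of \cite[Proposition 6.7]{YCheng2022}, adapted to the unitary setting with the intertwining computation from \lmref{L:GK method for m} and the $\gamma$-factor identity from \assref{H}. For a fixed $v$, one defines the candidate
\[
\Xi^m_{n,r}(v;X_1,\ldots,X_r)
=
\frac{L(2s,\phi_{J(\tau)},{\rm As})\,\Psi_{n,r}(v\ot\xi^m_{\tau,s})}{L(s,\phi_\pi\ot\phi_{J(\tau)})}
\]
with $X_i=q_E^{-s+1/2}\alpha_i$, which is a priori a rational expression in $q_E^{-s}$. To see that it depends only on the symmetric data $(X_1,\ldots,X_r)$, and hence produces a well-defined element of $\bbC(X_1,\ldots,X_r)^{\frak S_r}$, one uses that $\tau_{\ul{\alpha}}$ and $\tau_{\ul{\dot\alpha}}$ have the same semisimplification, hence the same spherical vector $v_\tau$ and the same $J(\tau)$; since the integrals are defined using only $v_\tau$ and $\xi^m_{\tau,s}(I_{2r})=v_\tau$, symmetry follows.

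The core of the argument is property (2), which is proved by combining the functional equation \eqref{E:FE} with \lmref{L:GK method for m} and \lmref{L:LS=Gal}. Applying $\Psi_{n,r}(v\ot\cdot)$ to both sides of the identity for $M^\dagger_{\psi_F,\delta}(\tau,s)\xi^m_{\tau,s}$ expresses $\Psi_{n,r}(v\ot\xi^m_{\tau^*,1-s})$ in terms of $\Psi_{n,r}(v\ot\xi^m_{\tau,s})$ multiplied by $\gamma_\delta(s,\pi\x\tau,\psi_F)$ divided by the Asai local coefficient. Substituting \assref{H} on the $\gamma$-factor and \corref{C:L-factor} (since $\tilde\phi_{J(\tau)}^c\cong\phi_{J(\tau)^c\,\vee}$ when $\tau$ is unitarily induced from Langlands type, so the Asai $L$-factors match up) converts everything into Galois-theoretic $\gamma$- and $L$-factors. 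The substitution $X_i\mapsto X_i^{-1}$ corresponds to $s\mapsto 1-s$ with $\ul\alpha\mapsto\ul\alpha^*$; chasing the central character $\omega_{\tau_s}(\varpi)^m=\prod_i(q_E^{-s+1/2}\alpha_i)^m$ from \lmref{L:GK method for m} yields the factor $(X_1\cdots X_r)^{a_\pi-m}$ once one cancels $\e(s,\phi_\pi\ot\phi_{J(\tau)},\psi_E)=q_E^{-a_\pi(s-1/2)r\cdots}$ against the shift. The hard part will be verifying that, after these cancellations, $\Xi^m_{n,r}(v)$ is genuinely a \emph{Laurent polynomial} in $\cS_r$, not merely a symmetric rational function: this is achieved by combining the convergence of $\Psi_{n,r}(v\ot\xi^m_{\tau,s})$ for $\Re(s)\gg 0$ (which forces polynomial behavior in one direction) with the functional equation just established (which forces polynomial behavior in the other), together with the observation that the quotient $\Psi_{n,r}/L(s,\phi_\pi\ot\phi_{J(\tau)})$ is entire by the standard Rankin-Selberg theory of \cite{Ben-ArtziSoudry2009}.

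For (3), the kernel description is obtained by Iwasawa decomposition $H_r=B_{H_r}R_{r,m}$: since $\xi^m_{\tau,s}$ is $R_{r,m}$-invariant and $v$ is $K$-invariant, the integral \eqref{E:RS int} unfolds to an integral over $T_r\times{\rm Mat}_{(n-r)\x r}(E)$, and by \lmref{L:supp of W_v} only $x\in{\rm Mat}_{(n-r)\x r}(\frak o_E)$ contributes when $r<n$; hence $\Psi_{n,r}(v\ot\xi^m_{\tau,s})$ depends linearly on the restriction of $W_v$ to $T_r$, so if this restriction vanishes the image in $\cS_r$ vanishes, and the converse follows by choosing $\tau$ with distinct Satake parameters to separate torus characters. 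Property (4) is the standard compatibility: setting $X_r=0$ is the limit $\alpha_r\to 0$, which corresponds to the degeneration sending $\tau_{\ul\alpha}$ on $\GL_r$ to an induction involving $\tau_{(\alpha_1,\ldots,\alpha_{r-1})}$ on $\GL_{r-1}$, and inserting this into the integral and evaluating on the cell where the last variable contributes trivially yields $\Xi^m_{n,r-1}(v)$, exactly as in \cite[Prop.~6.7]{YCheng2022}.

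Property (5), the Hecke equivariance, is established by the same calculation as in \loccit: for $\varphi\in\sH(H_n//R_{n,m})$, one writes
\[
\Psi_{n,n}(\varphi\star v\ot\xi^m_{\tau,s})
=
\int_{H_n}\varphi(h)\,\Psi_{n,n}\bigl(v\ot\rho_{\tau,s}(h)\xi^m_{\tau,s}\bigr)dh,
\]
and since $\rho_{\tau,s}$ is spherical at level $R_{r,m}$ (via the analogue of \lmref{L:same vol} on $H_n$), the inner integral equals $\lambda(\varphi,s)\cdot\Psi_{n,n}(v\ot\xi^m_{\tau,s})$ where $\lambda(\varphi,s)$ is the spherical function value on $\rho_{\tau,s}$, which is precisely $\sS_{n,m}(\varphi)$ evaluated at the Satake parameters of $\tau_s$. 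Dividing through by the $L$-factors (which are insensitive to $\varphi$) gives the claimed identity. The principal obstacle throughout is the polynomiality in (1) together with exact bookkeeping of powers of $q_E$ to land on the precise exponent $a_\pi-m$ in (2); both are routine once the $\e$-factor exponent $a_\pi$ of \eqref{E:epsilon} is tracked through the functional equation.
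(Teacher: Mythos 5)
Your overall strategy is the same as the paper's: express $\Psi_{n,r}$ as a formal series indexed by $|\det(a)|$ using the Iwasawa decomposition, multiply by the arithmetic $L$-factor denominators, and then combine convergence (boundedness below) with the functional equation obtained from Assumption~\ref{H}, \lmref{L:LS=Gal} and \lmref{L:GK method for m} (boundedness above) to conclude that the resulting series is a genuine element of $\cS_r$. You also correctly identify where \corref{C:L-factor} enters, and your sketches of (3)--(5) agree with how the paper defers to \cite[Proposition 6.7]{YCheng2022}.

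There are, however, two substantive gaps. First, your claim that ``the quotient $\Psi_{n,r}/L(s,\phi_\pi\ot\phi_{J(\tau)})$ is entire by the standard Rankin-Selberg theory of \cite{Ben-ArtziSoudry2009}'' is not established in that reference and is circular here: Ben-Artzi--Soudry show the integrals are rational in $q_E^{-s}$ and admit a g.c.d., but they do not identify that g.c.d.\ with the \emph{Galois-theoretic} $L$-factor $L(s,\phi_\pi\ot\phi_{J(\tau)})$, which is precisely what part (1) of the proposition is asserting. The paper avoids this by working purely formally: $L(2s,\phi_{J(\tau)},{\rm As})^{-1}=P_{\rm As}$ and $L(s,\phi_\pi\ot\phi_{J(\tau)})^{-1}=P_{\phi_\pi}$ are polynomials with constant term $1$, so multiplying by $P_{\phi_\pi}$ and dividing by $P_{\rm As}$ preserves ``Laurent-series-bounded-below,'' and the functional equation then caps the degree. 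No analytic input about the arithmetic $L$-factor is needed beyond this.

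Second, you do not address why $\Xi^m_{n,r}(v;X_1,\ldots,X_r)$ is well-defined as a function of $r$ \emph{independent} variables. Your substitution $X_i=q_E^{-s+1/2}\alpha_i$ constrains the tuple $(X_1,\ldots,X_r)$ to lie on the subvariety where the $X_i$ share a common factor $q_E^{-s+1/2}$, so a polynomial identity verified only on that locus does not a priori determine an element of $\cS_r$. The paper resolves this by constructing the two-variable object $\Xi^m_{n,r}(v;X_1,\ldots,X_r;Y)$, proving the functional equation $\Xi(v;X^{-1};Y^{-1})=\epsilon_{\phi_\pi,m}(X;Y)\,\Xi(v;X;Y)$ in the formal variables, and then separately establishing the homogeneity relation $\Xi^m_{n,r}(v;YX_1,\ldots,YX_r)=\Xi^m_{n,r}(v;X_1,\ldots,X_r;Y)$ before specializing $Y=1$. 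This step is essential to get part (2) in the stated form and is missing from your account. A related imprecision: $\tau_{\ul{\alpha}}$ and $\tau_{\ul{\dot\alpha}}$ are typically non-isomorphic reducible representations, so they do not literally have ``the same spherical vector''; the symmetry in the $X_i$ comes instead from the fact that the spherical Whittaker function $W(a;X_1,\ldots,X_r;\b{\psi}_E)$ is, by the Shintani--Casselman--Shalika formula, a symmetric polynomial in the $X_i$.
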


\begin{proof}
As mentioned, the proof is similar to that of \cite[Proposition 6.7]{YCheng2022}. So here we merely give a sketch of 
the proof. Let $v\in\cV_\pi^K$ and $W(a;\alpha_1,\ldots,\alpha_r;\b{\psi}_E):=\Lambda_{\tau,\b{\psi}_E}(\tau(a)v_\tau)$ 
for $a\in\GL_r(E)$. Note that the scalar $W(a;\alpha_1,\ldots,\alpha_r;\b{\psi}_E)$ is the value of the polynomial 
$W(a;X_1,\ldots, X_r; \b{\psi}_E)\in\cS_r$ at $(\alpha_1,\ldots, \alpha_r)$ (cf. \cite[Section 2]{Jacquet2012}, see also 
\cite[Section 5.3]{YCheng2022}). Then by \lmref{L:supp of W_v} and the choice of the measures, we have
\[
\Psi_{n,r}(v\ot\xi^m_{\tau,s})
=
\sum_{\ell\in\bbZ}\int_{a\in Z_r\backslash\GL_r(E),\,|\det(a)|_E=q_E^{-\ell}}
W_v(\hat{a})W(a;\alpha_1,\ldots,\alpha_r;\b{\psi}_E)|\det(a)|_E^{s-\frac{1}{2}+n-\frac{r}{2}}da
\]
for $\Re(s)\gg 0$. This is derived in a similar way as in the proof of \cite[Lemma 6.6]{YCheng2022}. 
Observe that the power of $|\det(a)|$ is different from that of loc. cit. due the the difference between the modulus functions 
of $Q_r$. Now consider the following integral 
\[
\Psi^m_{n,r,\ell}(v; X_1,\ldots, X_r)
=
\int_{a\in Z_r\backslash\GL_r(E),\,|\det(a)|_E=q_E^{-\ell}}
W_v(\hat{a})W(a;X_1, \ldots,X_r;\b{\psi}_E)|\det(a)|_E^{\frac{r}{2}-n}da
\]
which is indeed a finite sum; hence gives rise to a homogeneous polynomial of degree $\ell$ in $\cS_r$. 
Furthermore, there exists an integer $N_{v}$ depending only on 
$v$ such that $\Psi^m_{n,r,\ell}(v; X_1,\ldots, X_r)=0$ for all $\ell<N_v$. We thus define the following formal 
Laurent series 
\[
\Psi^m_{n,r}(v;X_1,\ldots, X_r;Y)
=
\sum_{\ell\in\bbZ}\Psi^m_{n,r,\ell}(v;X_1,\ldots, X_r)Y^\ell
=
\sum_{\ell\ge N_{v}}\Psi^m_{n,r,\ell}(v;X_1,\ldots, X_r)Y^\ell
\]
with coefficient in $\cS_r$. It then follows from the construction that 
\[
\Psi^m_{n,r}(v;\alpha_1,\ldots,\alpha_r; q_E^{-s+\frac{1}{2}})
=
\Psi_{n,r}(v\ot\xi^m_{\tau,s})
\]
for $\Re(s)\gg 0$.\\

Next, let $P_{\phi_\pi}(Y)\in\bbC[Y]$ be such that $L(s,\phi_\pi)=P_{\phi_\pi}(q_E^{-s})^{-1}$ and put
\[
P_{\phi_\pi}(X_1,\ldots, X_r; Y)
=
\prod_{j=1}^r P_{\phi_\pi}(q_E^{-\frac{1}{2}}X_jY).
\]
Since $\phi_\pi$ is conjugate self-dual, \corref{C:L-factor} gives
\[
L(s,\phi_\pi\ot\phi_{J(\tau)})
=
P_{\phi_\pi}(\a_1,\ldots,\a_r;q_E^{-s+\frac{1}{2}})^{-1}
\quad\text{and}\quad
L(1-s,\t{\phi}_\pi\ot\phi_{J(\tau^*)})
=
P_{\phi_\pi}(\a^{-1}_1,\ldots, \a^{-1}_r;q_E^{s-\frac{1}{2}})^{-1}.
\]
On the other hand, let
\[
P_{{\rm As}}(X_1,\ldots,X_r; Y)
=
\prod_{1\le i<j\le r}(1-q_E^{-1}X_iX_jY^2)\prod_{1\le k\le r}(1-q_E^{-\frac{1}{2}}X_kY).
\]
Then by definition
\[
L(2s,\phi_{J(\tau)},{\rm As})
=
P_{{\rm As}}(\a_1,\ldots,\a_r; q_E^{-s+\frac{1}{2}})^{-1}
\quad\text{and}\quad
L(2-2s,\phi_{J(\tau^*)},{\rm As})
=
P_{{\rm As}}(\a^{-1}_1,\ldots,\a^{-1}_r; q_E^{s-\frac{1}{2}})^{-1}.
\]
Now we define
\[
\Xi^m_{n,r}(v;X_1,\ldots,X_r;Y)
=
\frac{P_{\phi_\pi}(X_1,\ldots, X_r; Y)\Psi^m_{n,r}(v;X_1,\ldots,X_r;Y)}
{P_{{\rm As}}(X_1,\ldots,X_r; Y)}
\]
which is again a formal Laurent series with coefficients in $\cS_r$, and we have
\[
\Xi^m_{n,r}(v;\a_1,\ldots,\a_r; q_E^{-s+\frac{1}{2}})
=
\frac{L(2s,\phi_{J(\tau)},{\rm As})\Psi_{n,r}(v\ot\xi^m_{\tau,s})}{L(s,\phi_\pi\ot\phi_{J(\tau)})}
\]
for $\Re(s)\gg 0$ from the constructions. Moreover, by replacing $\alpha_1,\ldots,\alpha_r$ and $s$ with 
$\alpha_1^{-1},\ldots,\alpha_r^{-1}$ and $1-s$ respectively, we see that
\[
\Xi^m_{n,r}(v;\a^{-1}_1,\ldots,\a^{-1}_r; q_E^{s-\frac{1}{2}})
=
\frac{L(2-2s,\phi_{J(\tau^*)},{\rm As})\Psi_{n,r}(v\ot\xi^m_{\tau^*,1-s})}{L(1-s,\phi_\pi\ot\phi_{J(\tau^*)})}
\]
for $\Re(s)\ll 0$.\\

To connect two formal Laurent series $\Xi^m_{n,r}(v;X_1,\ldots, X_r; Y)$ and 
$\Xi^m_{n,r}(v;X^{-1}_1,\ldots, X^{-1}_r; Y^{-1})$, we have to apply the functional equation \eqref{E:FE} and this is 
the place that we need the Assumption \ref{H}. More precisely, \lmref{L:LS=Gal} and \lmref{L:GK method for m} imply
\[
M^\dag_{\psi_F,\delta}(\tau, s)\xi^m_{\tau,s}
=
\omega_{\tau_s}(\varpi)^m\cdot \frac{L(2-2s,\phi_{J(\tau^*)},{\rm As})}{L(2s, \phi_{J(\tau)}, {\rm As})}\cdot
\xi^m_{\tau^*,1-s}.
\]
Then the functional equation \eqref{E:FE} and the Assumption \ref{H} give
\[
\frac{L(2-2s,\phi_{J(\tau^*)},{\rm As})\Psi_{n,r}(v\ot\xi^m_{\tau^*,1-s})}{L(1-s,\t{\phi}_\pi\ot\phi_{J(\tau^*)})}
=
\omega_{\tau_s}(\varpi)^{-m}\epsilon(s,\phi_\pi\ot\phi_{J(\tau)},\psi)
\frac{L(2s,\phi_{J(\tau)},{\rm As})\Psi_{n,r}(v\ot\xi^m_{\tau,s})}{L(s,\phi_\pi\ot\phi_{J(\tau)})}.
\]
Now if we put
\[
\epsilon_{\phi_\pi,m}(X_1,\ldots,X_r;Y)
=
(X_1\cdots X_r)^{a_\pi-m}Y^{(a_\pi-m)r}
\]
then 
\[
\epsilon_{\phi_\pi,m}(\a_1,\ldots,\a_r;q^{-s+\frac{1}{2}})
=
\omega_{\tau_s}(\varpi)^{-m}\epsilon(s,\phi_\pi\ot\phi_{J(\tau)},\psi)
\]
for $s\in\bbC$ (cf. \eqref{E:epsilon}) and the relations above imply
\[
\Xi^m_{n,r}(v;X_1^{-1},\ldots, X_r^{-1}; Y^{-1})
=
\epsilon_{\phi_\pi,m}(X_1,\ldots, X_r;Y)
\Xi^m_{n,r}(v;X_1,\ldots,X_r;Y).
\]
In particular,  $\Xi^m_{n,r}(v;X_1,\ldots,X_r;Y)$ is a finite sum; hence it makes sense to define
\[
\Xi^m_{n,r}(v;X_1,\ldots,X_r)
=
\Xi^m_{n,r}(v;X_1,\ldots,X_r;1)\in\cS_r.
\]
Furthermore, the same reasoning in the proof of \cite[Proposition 6.7]{YCheng2022} gives
\[
\Xi^m_{n,r}(v;YX_1,\ldots, YX_r)=\Xi^m_{n,r}(v;X_1,\ldots,X_r;Y).
\]
Together, the existence of the linear maps $\Xi_{n,r}^m$ and the first assertion are proved. The proofs of the rest of 
the assertions are similar to that of loc. cit. We point out that when $\pi$ is unramified and $v\in\cV_\pi^{K_{n,0}}$, 
the assertion $(4)$ is essentially obtained in the proof of \cite[Theorem 8.1]{Ben-ArtziSoudry2009}. However, same 
argument actually applies even if $\pi$ is not unramified and $v\in\cV_\pi^{K_{n,m}}$ for $m>a_\pi$.
\end{proof}

\subsection{Proof of \thmref{T:main2}}
Now we are ready to verify \thmref{T:main2}. We claim that 
\begin{equation}\label{E:key id}
\Xi^{a_\pi}_{n,n}(v_\pi; X_1, \ldots, X_n)
=
\Lambda_{\pi,\psi_E}(v_\pi).
\end{equation}
Then the proof follows. In fact, \eqref{E:main eqn} will be implied by \propref{P:main prop} $(1)$ and $(4)$. On the other
hand, if $\pi$ is tempered, then \lmref{L:key lemma} and \propref{P:main prop} $(3)$ will tell us that 
$\Lambda_{\pi,\psi_E}(v_\pi)\ne 0$.\\

To prove \eqref{E:key id}, let $Y_j$ be the $j$-th elementary symmetric 
polynomial in $X_1, \ldots, X_n$ for $1\le j\le n$. Then we have 
\[
\cS_n=\bbC[Y_1, \ldots, Y_{n-1}, Y^{\pm}_n].
\] 
Note that $Y_n=X_1X_2\cdots X_n$ gives a $\bbZ$-grading $\cS_n$ by the degree of $Y_n$:
\[
\cS_n=\bigoplus_{\ell\in\bbZ}\bbC[Y_1,\ldots, Y_{n-1}]\, Y_n^\ell.
\] 
Now the key observation is that
\begin{equation}\label{E:degree in Y_n}
\Xi^m_{n,n}(v;X_1,\ldots, X_n)\in\bigoplus_{\ell\ge 0}\cS_{n,\ell}
\end{equation}
for every $v\in\cV_\pi^{K_{n,m}}$ and $m\ge 0$. This follows from the property of $W_v$ remarked in the beginning of 
\S\ref{SS:pre2} and the fact that
\[
{\rm deg}_{Y_n}W(\diag{\varpi^{\lambda_1},\ldots,\varpi^{\la_n}};X_1,\ldots, X_n;\b{\psi}_E)=\la_n
\]
where $\la_1\ge\cdots\ge\la_n$ are integers. Then \propref{P:main prop} $(2)$ gives
\[
\Xi^{a_\pi}_{n,n}(v_\pi;X^{-1},\ldots, X_n^{-1})
=
\Xi^{a_\pi}_{n,n}(v_\pi;X^{-1},\ldots, X_n^{-1})
=
\Xi^{a_\pi}_{n,n}(v_\pi; X_1,\ldots, X_n);
\]
hence by \eqref{E:degree in Y_n}, we must have $\Xi^{a_\pi}_{n,n}(v_\pi; X_1, \ldots, X_n)=c$ for some constant $c$.\\

To complete the proof, it remains to show that $c=\Lambda_{\pi,\psi_E}(v_\pi)$. By \propref{P:main prop} $(1)$, we find 
that 
\begin{equation}\label{E:c}
cL(s,\phi_\pi\ot\phi_{J(\tau)})=L(2s,\phi_{J(\tau)},{\rm As})\Psi_{n,n}(v_\pi\ot \xi^{a_\pi}_{\tau,s}).
\end{equation}
Then since 
\[
\Psi_{n,n}(v\ot\xi^{a_\pi}_{\tau,s})
=
\sum_{\ell\ge 0}\int_{a\in A_n,\,|\det(a)|_E=q_E^{-\ell}}
W_v(\hat{a})W(a;\alpha_1,\ldots,\alpha_r;\b{\psi}_E)\delta_{B_{H_n}}^{-1}(\hat{a})\delta_{Q_n}^{\frac{1}{2}}(\hat{a})
|\det(a)|_E^{s-\frac{1}{2}}da
\]
by the Iwasawa decomposition $H_n=B_{H_n}R_{n,a_\pi}$, the identification $T_n=\hat{A}_n$ and again the property of 
$W_{v_\pi}$ mentioned in the beginning of \S\ref{SS:pre2}, we see that the constant term in the LHS of \eqref{E:c}, 
when writing both sides as power series in $q_E^{-s}$, is $c$; while that in the RHS of \eqref{E:c} is 
$\Lambda_{\pi,\psi_E}(v_\pi)$. Note that here we also use the fact that $W(I_n;\a_1,\ldots,\a_n;\b{\psi}_E)=1$.
This finishes the proof.\qed

\subsection{Rankin-Selberg integrals attached to oldforms}
In this paper, we also compute the Rankin-Selberg integrals attached to oldforms. We in fact compute the integrals 
attached to elements in the conjectural basis $\sB_{\pi,m}$ of $\cV_\pi^{K_{n,m}}$ proposed in \S\ref{SS:basis}. 
The key for the computations is again \propref{P:main prop}, but additional works are required. Recall the notation
\[
\mu_\ell=\underbrace{(\ell,\ldots,\ell)}_{n}\in\sP
\quad\text{and}\quad
\varpi^{\mu_\ell}=\diag{\varpi^\ell I_n,\varpi^{-\ell} I_n}\in T_n
\]
and the convention $\varpi^{-\mu_\ell}=\left(\varpi^{\mu_\ell}\right)^{-1}$. Now let $v\in\cV_\pi^{R_{n,m}}$. Then 
$\pi(\varpi^{-\mu_1})v\in\cV_\pi^{R_{n,m+2}}$ and we have the following lemma.

\begin{lm}\label{L:eta action}
Let $v\in\cV_\pi^{R_{n,m}}$. We have 
\[
\Xi^{m+2}_{n,n}(\pi(\varpi^{-\mu_1})(v);X_1,\ldots, X_n)
=
q_E^{\frac{n(n-1)}{2}}(X_1\cdots X_n)\Xi^m_{n,n}(v;X_1,\ldots, X_n).
\]
\end{lm}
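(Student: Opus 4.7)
The plan is to use \propref{P:main prop} (1) to translate the claimed identity of $\Xi$-polynomials into a corresponding relation between Rankin--Selberg integrals, since the $L$-factor ratio $L(2s,\phi_{J(\tau)},\mathrm{As})/L(s,\phi_\pi\otimes\phi_{J(\tau)})$ appears on both sides and will cancel. Starting from the identity $W_{\pi(\varpi^{-\mu_1})v}(h)=W_v(h\varpi^{-\mu_1})$ that follows directly from the definition of the Whittaker function, I would substitute into $\Psi_{n,n}(\pi(\varpi^{-\mu_1})v\otimes\xi^{m+2}_{\tau,s})$ and then perform the right translation $h\mapsto h\varpi^{\mu_1}$ on $V_n\backslash H_n$. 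Because $\varpi^{\mu_1}\in T_n$ normalizes $V_n$, and by \lmref{L:same vol} combined with the explicit Iwasawa formula for $dh$ in \S\ref{SS:setup} the quotient measure is right-$H_n$-invariant, this change of variable introduces no Jacobian, and the integral becomes $\int_{V_n\backslash H_n}W_v(h)\,f_{\xi^{m+2}_{\tau,s}}(h\varpi^{\mu_1})\,dh$.

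The core step is then to relate $\xi^{m+2}_{\tau,s}(h\varpi^{\mu_1})$ to $\xi^m_{\tau,s}(h)$. The conjugacy identity $\varpi^{-\mu_1}R_{n,m}\varpi^{\mu_1}=R_{n,m+2}$ implies that $\rho_{\tau,s}(\varpi^{-\mu_1})\xi^m_{\tau,s}$ also lies in the one-dimensional space $I_n(\tau,s)^{R_{n,m+2}}$, and so must be a scalar multiple of $\xi^{m+2}_{\tau,s}$. Evaluating at $I_{2n}$ pins down the scalar via
\[
\xi^m_{\tau,s}(\widehat{\varpi^{-1}I_n})
=
\delta_{Q_n}^{1/2}(\widehat{\varpi^{-1}I_n})\,\tau_s(\varpi^{-1}I_n)\,v_\tau,
\]
using $\varpi^{-\mu_1}=\widehat{\varpi^{-1}I_n}\in M_n$ and the defining transformation rule of sections in $I_n(\tau,s)$. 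Combining $\delta_{Q_n}^{1/2}(\widehat{\varpi^{-1}I_n})$, the twist $|\det(\varpi^{-1}I_n)|_E^{s-1/2}$, the central character value $\omega_\tau(\varpi^{-1}I_n)=\prod_j\alpha_j^{-1}$, and the substitution $X_j=q_E^{-s+1/2}\alpha_j$ then yields an explicit identity of the form $\xi^{m+2}_{\tau,s}(h\varpi^{\mu_1})=C(X_1,\ldots,X_n)\,\xi^m_{\tau,s}(h)$.

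Substituting back into the Rankin--Selberg integral gives a corresponding scalar identity between $\Psi_{n,n}(\pi(\varpi^{-\mu_1})v\otimes\xi^{m+2}_{\tau,s})$ and $\Psi_{n,n}(v\otimes\xi^m_{\tau,s})$, and dividing by the common $L$-factor ratio via \propref{P:main prop} (1) produces the desired identity for $\Xi^{m+2}_{n,n}(\pi(\varpi^{-\mu_1})v;X_1,\ldots,X_n)$. The main obstacle is the careful bookkeeping of the various $q_E$-exponents arising from $\delta_{Q_n}^{1/2}$, the $|\det|_E$-twist in $\tau_s$, and the $\alpha_j\leftrightarrow X_j$ conversion; these need to combine cleanly into the factor $q_E^{n(n-1)/2}(X_1\cdots X_n)$ stated in the lemma. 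A useful consistency check is the functional equation in \propref{P:main prop} (2), which forces the exponent of $X_1\cdots X_n$ in any such proportionality factor to be exactly $1$ and thereby corroborates the form of the answer.
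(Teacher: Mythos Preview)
Your approach is correct and is essentially the natural argument the paper has in mind: since the paper's own proof consists only of a pointer to \cite[Lemma~7.1]{YCheng2022}, and that reference treats the parallel $\mathrm{SO}_{2n+1}\times\GL_r$ case by exactly the kind of manipulation you describe (translate $h\mapsto h\varpi^{\mu_1}$ in the Rankin--Selberg integral and track the scalar coming from $\delta_{Q_n}^{1/2}$ and the central twist), your sketch reproduces that method. One minor correction: the right-$H_n$-invariance of $dh$ on $V_n\backslash H_n$ is a consequence of the unimodularity of $H_n$ and $V_n$, not of \lmref{L:same vol}; the latter only ensures the Iwasawa description of $dh$ is independent of $m$. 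Your consistency check via \propref{P:main prop}~(2) is a good safeguard for the exponent of $X_1\cdots X_n$, and indeed the power-of-$q_E$ bookkeeping (which hinges on $\delta_{Q_n}(\widehat{\varpi I_n})=|\det(\varpi I_n)|_E^{n}$ in the unitary case, as opposed to $|\det|^{n-1}$ in the orthogonal case) is the only place requiring care.
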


\begin{proof}
This follows from the identical arguments in the proof of \cite[Lemma 7.1]{YCheng2022}.
\end{proof}

Recall that $v_\pi'\in\cV_\pi^{K_{n,a_\pi+1}}$ stands for a basis. The next lemma compute the integrals attached to 
$v_\pi'$. To state and to prove it, we denote by $\cY_j(X_1, X_2,\ldots, X_r)$ the $j$-th elementary 
symmetric polynomial in $X_1, X_2,\ldots, X_r$ for $1\le j\le r$ and put $\cY_0(X_1, X_2,\ldots, X_r)=1$. 
Then the relations 
\[
\cY_j(X_1,X_2,\ldots, X_{r-1},0)=\cY_j(X_1,X_2,\ldots, X_{r-1})
\quad\text{and}\quad
\cY_r(X_1,X_2,\ldots, X_{r-1},0)=0
\]
hold for $0\le j\le r-1$.

\begin{lm}\label{L:level a+1}
We have
\[
\Xi_{n,n}^{a_\pi+1}(v'_\pi;X_1,\ldots, X_n)=\Lambda_{\pi,\psi_E}(v'_\pi)\sum_{j=0}^n \cY_j(X_1,\ldots,X_n).
\]
Moreover, if $\pi$ is tempered, then $\Lambda_{\pi,\psi_E}(v'_\pi)\ne 0$.
\end{lm}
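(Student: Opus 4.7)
The plan is to parallel the derivation of \eqref{E:key id} but now at level $m=a_\pi+1$, where the functional equation of Proposition~\ref{P:main prop}(2) reads
\[
\Xi^{a_\pi+1}_{n,n}(v'_\pi; X_1^{-1},\ldots, X_n^{-1}) = (X_1\cdots X_n)^{-1}\,\Xi^{a_\pi+1}_{n,n}(v'_\pi; X_1,\ldots, X_n).
\]
Combined with \eqref{E:degree in Y_n}, which places $\Xi^{a_\pi+1}_{n,n}(v'_\pi)$ in $\bbC[Y_1,\ldots,Y_n]\subset \bbC[X_1,\ldots,X_n]^{\mathfrak{S}_n}$, writing $\Xi^{a_\pi+1}_{n,n}(v'_\pi;X)=\sum_{\alpha}c_\alpha X^\alpha$ with $\alpha\in\bbZ_{\geq 0}^n$ and matching coefficients yields $c_\alpha = c_{(1,\ldots,1)-\alpha}$. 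This forces the multi-exponents with $c_\alpha\neq 0$ to lie in $\{0,1\}^n$, and $\mathfrak{S}_n$-symmetry then gives
\[
\Xi^{a_\pi+1}_{n,n}(v'_\pi; X_1,\ldots, X_n) = \sum_{j=0}^n c_j \cY_j(X_1,\ldots, X_n)
\]
for constants $c_j$ with $c_j = c_{n-j}$.

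To force all the $c_j$ equal, I would apply Proposition~\ref{P:main prop}(4) once. Using $\cY_n(X_1,\ldots,X_{n-1},0)=0$ together with $\cY_j(X_1,\ldots,X_{n-1},0)=\cY_j(X_1,\ldots,X_{n-1})$ for $j\leq n-1$, setting $X_n = 0$ gives
\[
\Xi^{a_\pi+1}_{n,n-1}(v'_\pi;X_1,\ldots,X_{n-1})=\sum_{j=0}^{n-1}c_j\cY_j(X_1,\ldots,X_{n-1}),
\]
with the \emph{same} constants $c_j$. The exponent $a_\pi-m=-1$ in the functional equation is independent of $r$, so the argument of the previous paragraph applies verbatim at rank $r=n-1$ and produces the further relation $c_j=c_{n-1-j}$ for $0\leq j\leq n-1$. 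Combining this with $c_j=c_{n-j}$ yields $c_{k-1}=c_k$ for every $1\leq k\leq n$, so there is a single constant $c$ with $c_0=\cdots=c_n=c$.

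To identify $c$, I would pass to the two-variable object $\Xi^{a_\pi+1}_{n,n}(v'_\pi;X;Y)$ constructed in the proof of Proposition~\ref{P:main prop}. The identity $\Xi^{a_\pi+1}_{n,n}(v'_\pi;YX_1,\ldots,YX_n)=\Xi^{a_\pi+1}_{n,n}(v'_\pi;X;Y)$ shows that the $Y^0$-coefficient of the latter is exactly $c$; on the other hand, since both $P_{\phi_\pi}(X;Y)$ and $P_{{\rm As}}(X;Y)$ have constant term $1$ in $Y$, this $Y^0$-coefficient agrees with $\Psi^{a_\pi+1}_{n,n,0}(v'_\pi;X)$. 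The latter integrates $W_{v'_\pi}(\hat a)W(a;X;\b{\psi}_E)$ over $|\det(a)|_E=1$, and by the $K_{n,a_\pi+1}$-invariance of $v'_\pi$ (together with the containment of the spherical part of $\hat a$ in $K_{n,a_\pi+1}$) collapses to $\Lambda_{\pi,\psi_E}(v'_\pi)\,W(I_n;X;\b{\psi}_E)=\Lambda_{\pi,\psi_E}(v'_\pi)$, exactly as in the proof of \eqref{E:key id}. Hence $c=\Lambda_{\pi,\psi_E}(v'_\pi)$. For tempered $\pi$, Lemma~\ref{L:key lemma} applied to the nonzero $v'_\pi$ shows $W_{v'_\pi}$ is not identically zero on $T_n$, so Proposition~\ref{P:main prop}(3) forces $\Xi^{a_\pi+1}_{n,n}(v'_\pi)\neq 0$, i.e.\ $c\neq 0$, which gives $\Lambda_{\pi,\psi_E}(v'_\pi)\neq 0$.

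The main technical point will be the passage from the symmetry $c_j=c_{n-j}$ to full constancy $c_0=\cdots=c_n$: the functional equation alone at rank $n$ is not enough, and the extra input that breaks the tie is the restriction identity of Proposition~\ref{P:main prop}(4), which transfers the coefficient pattern to rank $n-1$ and supplies the complementary symmetry $c_j=c_{n-1-j}$. The remaining constant-term extraction and nonvanishing are direct adaptations of the newform case.
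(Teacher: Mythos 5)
Your argument is correct and follows essentially the same route as the paper's proof: use \eqref{E:degree in Y_n} together with the functional equation of Proposition~\ref{P:main prop}(2) at level $a_\pi+1$ to reduce $\Xi^{a_\pi+1}_{n,n}(v'_\pi)$ to a $\mathbb C$-linear combination $\sum_j c_j\cY_j$, propagate to rank $r<n$ via Proposition~\ref{P:main prop}(4) and re-apply the functional equation to force all $c_j$ equal, identify the common constant with $\Lambda_{\pi,\psi_E}(v'_\pi)$ by a constant-term extraction as in the newform case, and invoke Lemma~\ref{L:key lemma} together with Proposition~\ref{P:main prop}(3) for the nonvanishing. The only cosmetic deviations are that you work with monomial exponents in $\{0,1\}^n$ rather than products of the $\cY_j$ directly, and you deduce $c_0=\cdots=c_n$ from the two ranks $n$ and $n-1$, whereas the paper compares constant terms at every rank $1\le r\le n$ to get $b_0=b_r$; both yield the same conclusion.
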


\begin{proof}
By \eqref{E:degree in Y_n}, we can write
\begin{equation}\label{E:rought Xi for level a+1}
\Xi^{a_\pi+1}_{n,n}(v_\pi';X_1,\ldots, X_n)
=
\sum_{\ul{\ell}}
b_{\ul{\ell}}
\cY_1(X_1,\ldots, X_n)^{\ell_1}\cY_2(X_1,\ldots, X_n)^{\ell_2}\cdots\cY_n(X_1,\ldots, X_n)^{\ell_n}
\end{equation}
for some $\ul{\ell}=(\ell_1,\ldots, \ell_n)\in\bbZ^n_{\ge 0}$ and $b_{\ul{\ell}}\in\bbC$. Then since 
\[
\cY_j(X^{-1}_1,\ldots, X_r^{-1})=\cY_{r-j}(X_1,\ldots, X_r)\cY_r(X_1,\ldots, X_r)^{-1}
\]
for $0\le j\le r$, the functional equation in \propref{P:main prop} $(2)$ gives
\[
\Xi^{a_\pi+1}_{n,n}(v'_\pi;X_1,\ldots, X_n)
=
\sum_{\ul{\ell}}
b_{\ul{\ell}}\cY_1(X_1,\ldots, X_n)^{\ell_{n-1}}\cY_2(X_1,\ldots, X_n)^{\ell_{n-2}}\cdots 
\cY_n(X_1,\ldots, X_n)^{1-\ell_1-\ell_2-\cdots-\ell_n}.
\]
As $1-\ell_1-\ell_2-\cdots-\ell_n\ge 0$, we find that $\ell_j\le 1$ for $1\le j\le n$; hence 
\eqref{E:rought Xi for level a+1} becomes
\[
\Xi^{a_\pi+1}_{n,n}(v'_\pi;X_1,\ldots, X_n)
=
\sum_{j=0}^n b_j\cY_j(X_1,\ldots, X_n).
\]
To preceed, we apply \propref{P:main prop} (4) to get
\[
\Xi^{a_\pi+1}_{n,r}(v;X_1,\ldots, X_r)
=
\sum_{j=0}^r b_j\cY_j(X_1,\ldots, X_r)
\]
for $1\le r\le n$. Then the functional equation for $\Xi^{a_\pi+1}_{n,r}(v'_\pi;X_1,\ldots, X_r)$ implies
\[
\Xi^{a_\pi+1}_{n,r}(v'_\pi;X_1,\ldots, X_r)
=\cY_r(X_1,\ldots,X_r)\Xi_{n,r}^{a_\pi+1}(v'_\pi;X^{-1}_1,\ldots,X^{-1}_r)
=
\sum_{j=0}^rb_j\cY_{r-j}(X_1,\ldots,X_r).
\]
By comparing the constant terms, we find that $b_j=b_0$ for $1\le j\le n$; thus 
\[
\Xi^{a_\pi+1}_{n,n}(v'_\pi;X_1,\ldots, X_n)
=
b_0\sum_{j=0}^n\cY_j(X_1,\ldots, X_n).
\] 
Finally, exactly the same arguments as in the proof of \thmref{T:main2} above show that 
$b_0=\Lambda_{\pi,\psi_E}(v'_\pi)$ and $\Lambda_{\pi,\psi_E}(v'_\pi)\ne 0$ if $\pi$ is tempered. This completes the proof.
\end{proof}

Now we can describe how to compute Rankin-Selberg integrals attached to elements in $\sB_{\pi,m}$.

\begin{thm}\label{T:main for oldform}
Let  $\sB_{\pi,m}$ be the set defined in \S\ref{SS:basis} for each $m\ge a_\pi$. Then the Rankin-Selberg integrals 
$\Psi_{n,r}(v\ot\xi^m_{\tau,s})$ attached to $v\in\sB_{\pi,m}$ and $\xi^m_{\tau,s}$ can be computed by using 
\propref{P:main prop} $(1)$, $(4)$ and $(5)$ together with \lmref{L:eta action}, \lmref{L:level a+1} and 
\eqref{E:main eqn}.
\end{thm}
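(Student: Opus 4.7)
The plan is to make the computational procedure implicit in the statement of the theorem entirely explicit. By Proposition~\ref{P:main prop} parts (1) and (4), the Rankin-Selberg integral $\Psi_{n,r}(v\otimes\xi^m_{\tau,s})$ is determined once the polynomial $\Xi^m_{n,n}(v;X_1,\ldots,X_n)$ is known: part (4) recovers $\Xi^m_{n,r}$ from $\Xi^m_{n,n}$ by specialising the last $n-r$ variables to zero, and part (1) then reads off the integral upon setting $X_j=q_E^{-s+1/2}\alpha_j$. Hence the heart of the matter is to compute $\Xi^m_{n,n}$ on each element of $\sB_{\pi,m}$.

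Suppose first that $m$ and $a_\pi$ share the same parity, and write $a_\pi=e+2\ell_0$ and $m=e+2\ell_1$. An element of $\sB_{\pi,m}$ then takes the form $v=\pi(\varpi^{-\mu_{\ell_1}})\,\varphi_{\lambda,e}\star\pi(\varpi^{\mu_{\ell_0}})v_\pi$, and I will unwind it in three steps. Iterating Lemma~\ref{L:eta action} $\ell_1$ times peels off the outer translation, contributing a factor $q_E^{\ell_1 n(n-1)/2}(X_1\cdots X_n)^{\ell_1}$ and reducing the computation to $\Xi^e_{n,n}(\varphi_{\lambda,e}\star\pi(\varpi^{\mu_{\ell_0}})v_\pi;X)$. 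Since $\pi(\varpi^{\mu_{\ell_0}})v_\pi\in\cV_\pi^{K^0_{n,a_\pi}}\subset\cV_\pi^{R_{n,e}}$, Proposition~\ref{P:main prop} (5) applies and extracts the Hecke action as multiplication by the Satake image $\sS_{n,e}(\varphi_{\lambda,e})$. A second iteration of Lemma~\ref{L:eta action}, applied to $\pi(\varpi^{\mu_{\ell_0}})v_\pi$ and then solved for its $\Xi^e_{n,n}$ in terms of $\Xi^{a_\pi}_{n,n}(v_\pi;X)$, introduces a factor $q_E^{-\ell_0 n(n-1)/2}(X_1\cdots X_n)^{-\ell_0}$; the terminal identity $\Xi^{a_\pi}_{n,n}(v_\pi;X)=\Lambda_{\pi,\psi_E}(v_\pi)$ established inside the proof of Theorem~\ref{T:main2} (equivalent to \eqref{E:main eqn} via Proposition~\ref{P:main prop} (1)) then closes the recursion. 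Assembling these factors yields
\[
\Xi^m_{n,n}\bigl(\eta_{\lambda,a_\pi,m}(v_\pi);X\bigr)
=q_E^{(m-a_\pi)n(n-1)/4}(X_1\cdots X_n)^{(m-a_\pi)/2}\,\sS_{n,e}(\varphi_{\lambda,e})\cdot\Lambda_{\pi,\psi_E}(v_\pi).
\]

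The case of opposite parity runs in parallel: writing $a_\pi+1=e+2\ell'_0$ and $m=e+2\ell_1$, the same three-step unwinding applies to $v=\eta_{\lambda,a_\pi+1,m}(v'_\pi)$, except that the terminal base case is now provided by Lemma~\ref{L:level a+1}, which gives $\Xi^{a_\pi+1}_{n,n}(v'_\pi;X)=\Lambda_{\pi,\psi_E}(v'_\pi)\sum_{j=0}^n\cY_j(X_1,\ldots,X_n)$. Once $\Xi^m_{n,n}(v;X)$ is in hand in either case, Proposition~\ref{P:main prop} parts (4) and (1) deliver a closed-form expression for $\Psi_{n,r}(v\otimes\xi^m_{\tau,s})$ in terms of $L$-factors, the Satake image $\sS_{n,e}(\varphi_{\lambda,e})$, and the scalar $\Lambda_{\pi,\psi_E}(v_\pi)$ or $\Lambda_{\pi,\psi_E}(v'_\pi)$.

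The work here is bookkeeping rather than analysis; there is no genuine conceptual obstacle. The principal care required is in tracking the parity $e$ and the level shifts at each step, verifying that the intermediate vectors $\pi(\varpi^{\mu_{\ell_0}})v_\pi$ and $\pi(\varpi^{\mu_{\ell'_0}})v'_\pi$ lie in the appropriate invariance subspaces so that the Hecke operator $\varphi_{\lambda,e}\in\sH(H_n//R_{n,e})$ acts and Proposition~\ref{P:main prop} (5) is applicable, and ensuring that the accumulated powers of $q_E$ and $(X_1\cdots X_n)$ from the iterated applications of Lemma~\ref{L:eta action} compose correctly across the forward and reverse directions.
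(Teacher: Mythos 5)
Your proof is correct and follows essentially the same route as the paper: compute $\Xi^m_{n,n}$ on basis vectors via Lemma~\ref{L:eta action} and Proposition~\ref{P:main prop}~(5), anchor the recursion at the newform (via \eqref{E:main eqn}) or at $v'_\pi$ (via Lemma~\ref{L:level a+1}), then specialize via parts (4) and (1). The one organizational difference: the paper singles out one subtlety --- that $\sB_{\pi,m}$ is built from Hecke operators $\varphi_{\lambda,e}\in\sH(H_n//R_{n,e})$ at the base level $e\in\{0,1\}$, whereas Proposition~\ref{P:main prop}~(5) is applied at a specific level --- and resolves it by the algebraic rewrite $\pi(\varpi^{-\mu_{\ell'}})\varphi_{\lambda,e}\star\pi(\varpi^{\mu_\ell})v_0 = \pi(\varpi^{-\mu_{\ell'-\ell}})\varphi_{\lambda,a}\star v_0$, so that only a single upward chain of Lemma~\ref{L:eta action} is needed. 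You instead iterate Lemma~\ref{L:eta action} forward down to level $e$, apply Proposition~\ref{P:main prop}~(5) there, and iterate back up; the net effect is the same, and your bookkeeping correctly produces the factor $q_E^{(m-a_\pi)n(n-1)/4}(X_1\cdots X_n)^{(m-a_\pi)/2}$. (Incidentally, your exponent $q_E^{(m-a_\pi)n(n-1)/4}$ is the one forced by iterating Lemma~\ref{L:eta action} $(m-a)/2$ times; the exponent $q_E^{(n(n-1)+(m-a))/2}$ appearing in \eqref{E:right eqn} of the paper appears to be a typographical slip.)
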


\begin{proof}
The proof is almost clear from the statement. However, there is one point that we need to clarify, namely, in the definition of 
$\sB_{\pi,m}$, the Hecke algebra elements involved are in $\sH(H_n//R_{n,e})$ with $e=0,1$;  while in 
\propref{P:main prop} $(5)$, the Satake isomorphisms $\sS_{n,m}$ are for any $m\ge 0$. To explain this, let $a=a_\pi$ or 
$a_\pi+1$ and $v_0\in\cV_\pi^{K_{n,a}}$ be a basis. Write $a=e+2\ell$ for some $e=0,1$ and $\ell\ge 0$. Let $m\ge a$ 
be an integer whose parity is the same as $a$, so that $m=e+2\ell'$ for some $\ell'\ge\ell$. Then $\sB_{\pi,m}$ consists 
of the elements of the form
\[
v=\pi(\varpi^{-\mu_{\ell'}})\varphi_{\la,e}\star \pi(\varpi^{\mu_\ell})(v_0)
\]
for some $\la\in\sP$. Now a straightforward computation shows that 
\[
v=\pi(\varpi^{-\mu_{\ell'-\ell}})\varphi_{\la,a}\star v_0;
\]
hence \propref{P:main prop} $(5)$ and \lmref{L:eta action} give
\begin{equation}\label{E:right eqn}
\Xi^m_{n,n}(v;X_1,\ldots, X_n)
=
q_E^{\frac{n(n-1)+(m-a)}{2}}(X_1\cdots X_n)^{\frac{m-a}{2}}\sS_{n,a}(\varphi_{\la,a})
\cdot
\Xi^a_{n,n}(v_0;X_1,\ldots, X_n).
\end{equation}
This concludes the proof.
\end{proof}

As a corollary, we see that $\sB_{\pi,m}$ defines a basis of $\cV_\pi^{K_{n,m}}$ for each $m> a_\pi+1$ when $\pi$
is tempered.

\begin{cor}
Suppose that $\pi$ is tempered. Then under the Assumption \ref{H}, the set $\sB_{\pi,m}$ defines a basis of 
$\cV_\pi^{K_{n,m}}$ for each $m> a_\pi+1$.
\end{cor}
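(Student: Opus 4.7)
The plan is to verify the two requirements for $\sB_{\pi,m}$ to be a basis of $\cV_\pi^{K_{n,m}}$: cardinality and linear independence. For the cardinality, observe that $\sB_{\pi,m}$ is indexed by partitions $\la=(\la_1\ge\cdots\ge\la_n\ge 0)\in\sP$ subject to $\la_1\le\lfloor(m-a_\pi)/2\rfloor$; a standard stars-and-bars count shows that the number of such $\la$ equals $\binom{\lfloor(m-a_\pi)/2\rfloor+n}{n}$, which by \thmref{T:main} agrees with $\dim_{\mathbb C}\cV_\pi^{K_{n,m}}$. Thus it suffices to prove that the elements of $\sB_{\pi,m}$ are linearly independent.

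For the linear independence, I would apply the linear map $\Xi^m_{n,n}$ from \propref{P:main prop} (noting that $\cV_\pi^{K_{n,m}}\subseteq\cV_\pi^{R_{n,m}}$). Let $a\in\{a_\pi,a_\pi+1\}$ be the integer of the same parity as $m$, and let $v_0\in\{v_\pi,v'_\pi\}$ be the corresponding basis vector of $\cV_\pi^{K_{n,a}}$. The computation leading to \eqref{E:right eqn} in the proof of \thmref{T:main for oldform} gives
\[
\Xi^m_{n,n}(\eta_{\la,a,m}(v_0);X_1,\ldots,X_n)
=
q_E^{\frac{n(n-1)+(m-a)}{2}}(X_1\cdots X_n)^{\frac{m-a}{2}}\,\sS_{n,a}(\varphi_{\la,a})\,\Xi^a_{n,n}(v_0;X_1,\ldots,X_n),
\]
where the dependence on $\la$ is captured entirely by the factor $\sS_{n,a}(\varphi_{\la,a})$ and all other factors are independent of $\la$.

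Under the tempered assumption, the polynomial $\Xi^a_{n,n}(v_0;X_1,\ldots,X_n)$ is nonzero in $\cS_n$: when $a=a_\pi$, \thmref{T:main2} (applied to $v_\pi$) gives $\Xi^{a_\pi}_{n,n}(v_\pi;X)=\Lambda_{\pi,\psi_E}(v_\pi)\neq 0$; when $a=a_\pi+1$, \lmref{L:level a+1} gives $\Xi^{a_\pi+1}_{n,n}(v'_\pi;X)=\Lambda_{\pi,\psi_E}(v'_\pi)\sum_{j=0}^n\cY_j(X_1,\ldots,X_n)\neq 0$. Since $\cS_n$ is a (Laurent polynomial) integral domain, multiplication by the fixed nonzero element $(X_1\cdots X_n)^{(m-a)/2}\,\Xi^a_{n,n}(v_0;X)$ is injective on $\cS_n$. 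Consequently, the linear independence of $\{\Xi^m_{n,n}(\eta_{\la,a,m}(v_0))\}_{\la}$ reduces to the linear independence of $\{\sS_{n,a}(\varphi_{\la,a})\}_{\la}$ in $\cS_n$. But $\sS_{n,a}$ is an injective algebra homomorphism (Satake isomorphism onto $\cS_n^0$), while the $\varphi_{\la,a}$ form a $\mathbb C$-basis of $\sH(H_n//R_{n,a})$ by the Cartan decomposition; hence their images under $\sS_{n,a}$, and a fortiori the subset corresponding to $\la$ with $2\la_1\le m-a$, are linearly independent.

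The main obstacle is not really a technical one at this stage, since all of the heavy lifting was done in the computations of \S\ref{S:proof of main2}: the identity \eqref{E:right eqn} already encodes the level raising explicitly, the non-vanishing of $\Lambda_{\pi,\psi_E}(v_\pi)$ and $\Lambda_{\pi,\psi_E}(v'_\pi)$ in the tempered case follows from \lmref{L:key lemma} via the Gan-Gross-Prasad argument, and the Satake picture gives the required linear independence on the Hecke algebra side. The only point requiring care is the bookkeeping of parities: one has to run the argument separately with $(a,v_0)=(a_\pi,v_\pi)$ when $m\equiv a_\pi\pmod 2$, and with $(a,v_0)=(a_\pi+1,v'_\pi)$ in the opposite parity case, but the two cases are structurally identical.
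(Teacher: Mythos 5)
Your proof is correct and takes the same route the paper intends: the paper's own proof is just the one-line citation of \eqref{E:main eqn}, \lmref{L:level a+1} and \eqref{E:right eqn}, and what you have written out is precisely the bookkeeping that ``follows immediately'' from those three facts (cardinality check against \thmref{T:main}, injectivity of the Satake map $\sS_{n,a}$ on the basis $\{\varphi_{\la,a}\}$, and cancellation of the fixed nonzero factor $(X_1\cdots X_n)^{(m-a)/2}\Xi^a_{n,n}(v_0)$ in the integral domain $\cS_n$, with the two parity cases handled separately).
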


\begin{proof}
This follows immediately from \eqref{E:main eqn}, \lmref{L:level a+1} and \eqref{E:right eqn}.
\end{proof}

\section*{Appendix}\label{A}
The aim of this appendix is to make sense the functional equation \eqref{E:FE} even when the representations involved 
are reducible. So let $\psi$ be a non-trivial additive character of $E$ and $\pi$ (resp. $\tau$) be a representation of $G_n$ 
(resp. $\GL_r(E)$) that is of Whittaker type. Let $Z'$ and $Y'$ be subgroups of $G_n$ given by 
\[
Z'
=
\stt{
z'
=
\begin{pmatrix}
I_r&&&&\\
&z&&&\\
&&1&&\\
&&&z^*&\\
&&&&I_r
\end{pmatrix}
\in G_n
\mid
z\in Z_{n-r}
}
\quad\text{and}\quad
Y'
=
\stt{
y'
=
\begin{pmatrix}
I_r&0&0&b&0\\
a&I_{n-r}&x&c&b'\\
&&1&x'&0\\
&&&I_{n-r}&0\\
&&&a'&I_r
\end{pmatrix}
\in G_n
}.
\]
Put $Y=Z'Y'$ which is a subgroup of $G_n$; we define a character $\psi_Y$ of $Y$ by 
$\psi_Y(z'y')=\psi_{Z_{n-r}}(z)\psi(x_{n-r})$, where ${}^tx=(x_1,\ldots, x_{n-r})\in E^{n-r}$. Note that 
$\psi_Y(hyh^{-1})=\psi_Y(y)$ for $h\in H_r$ and $y\in Y$ (recall the embedding $H_r\hookto G_n$ in 
\S\ref{SSS:embed}). Thus $\psi_Y$ extends to a character of $H_r\ltimes Y\subset G_n$ which we again denoted by 
$\psi_Y$. Now one checks that the Rankin-Selberg integral \eqref{E:RS int} gives rise to an element in
\begin{equation}\label{E:hom}
{\rm Hom}_{H_r\ltimes Y}(\pi|_{H_r\ltimes Y}\ot\rho_{\tau,s},\psi_Y)
\end{equation}
for $\Re(s)\gg 0$ by the integral and by meromorphic continuation in general.
Here we let $Y$ acts trivially on $\rho_{\tau,s}$. Then we prove the following:

\begin{thmA}
The dimension of the Hom space \eqref{E:hom} is equal to one, except for finitely many values of $q_E^{-s}$. 
\end{thmA}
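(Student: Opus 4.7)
The proof naturally splits into a lower and an upper bound on the dimension. For the lower bound, the Rankin-Selberg integral $\Psi_{n,r}(v\ot\xi_s)$, viewed as a functional on $\cV_\pi\ot I_r(\tau,s)$, transforms under $H_r\ltimes Y$ by the character $\psi_Y$: a direct computation using the shape of the matrices $\tilde x$ in \eqref{E:X_n,r} together with the Whittaker-equivariance of $W_v$ with respect to $\psi_{N_n}$ yields exactly the required $\psi_Y$-transformation. Since this functional is non-zero for $\Re(s)\gg 0$ and admits a meromorphic continuation that is rational in $q_E^{-s}$, the Hom space \eqref{E:hom} has dimension at least one away from a finite set of values of $q_E^{-s}$.

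For the upper bound, I would adapt the strategy of Zhang \cite{QZhang2019}, whose proof for irreducible $\pi$ and $\tau$ ultimately rests on the multiplicity one theorem for Bessel models of $\U_{2n+1}$ established in \cite{AGRS2010} and \cite{GanGrossPrasad2012}. The extra ingredient needed here is a reduction from representations of Whittaker type to irreducible ones. First, for all but finitely many values of $q_E^{-s}$ the induced representation $\rho_{\tau,s}$ is irreducible, since the reducibility points form a discrete subset of $\bbC$ meeting each vertical strip in finitely many points; in this range $\rho_{\tau,s}$ is automatically generic, because the Whittaker functional inherited from $\tau$ remains non-zero. It therefore suffices to prove that
\[
\dim \Hom_{H_r\ltimes Y}(\pi\ot\sigma,\psi_Y)\le 1
\]
for every $\pi$ of Whittaker type on $G_n$ and every irreducible generic representation $\sigma$ of $H_r$.

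The hypothesis $\dim\Hom_{N_n}(\pi,\psi_{N_n})=1$ forces $\pi$ to have a unique irreducible Jordan-H\"older constituent $\pi^\circ$ that is generic. A d\'evissage along the Jordan-H\"older filtration of $\pi$, combined with left-exactness of the Hom functor in the first argument, reduces the bound to the individual irreducible constituents of $\pi$. For the non-generic constituents, a Bessel-to-Whittaker integration over an appropriate unipotent subgroup (extracting the $\psi_{N_n}$-Whittaker data hidden in the character $\psi_Y$) transforms any element of the corresponding Hom space into a Whittaker functional on the constituent, which must vanish by lack of genericity. Hence only the constituent $\pi^\circ$ can contribute, and for the pair $(\pi^\circ,\sigma)$ Zhang's theorem supplies the bound $\le 1$.

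The principal obstacle will be the Bessel-to-Whittaker reduction: one must verify that the integration over the relevant unipotent radical is non-degenerate on any putative non-zero element of $\Hom_{H_r\ltimes Y}(\pi_i\ot\sigma,\psi_Y)$ associated to a non-generic constituent $\pi_i$, so that such constituents are genuinely ruled out rather than merely carrying Hom that maps to zero. I expect this to be a standard but somewhat technical lemma of the kind that appears in the analysis of Bessel models (and that is implicit in the passage from irreducible to Whittaker type in analogous settings for $\GL_r$); once it is in place, combining it with Zhang's irreducible result completes the proof.
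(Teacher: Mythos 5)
Your lower bound, via the Rankin-Selberg integral, is essentially what the paper does implicitly and is fine. The upper bound, however, follows a fundamentally different and ultimately problematic route.

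The paper proves Theorem A by adapting Soudry's argument for $\SO_{2n+1}\times\GL_r$: one passes to the normalized Jacquet module $\pi_{N_{n,n}}$ regarded as a $P_{n+1}$-module and exploits the $P_{n+1}$-theory of Gel'fand--Kazhdan and Bernstein--Zelevinsky. The Whittaker-type hypothesis on $\pi$ manifests there as the assertion that the $P_{n+1}$-module filtration of $\pi_{N_{n,n}}$ has ${\rm ind}_{Z_{n+1}}^{P_{n+1}}\psi_{Z_{n+1}}$ occurring exactly once, as the bottom step $\cV_1$. Frobenius reciprocity then reduces \eqref{E:hom} to Hom spaces of $P_{n+1}$-modules, of which only the bottom step contributes (dimension one), the remaining ones vanishing outside a finite set of $q_E^{-s}$. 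Crucially, the steps of this $P_{n+1}$-filtration are not the Jordan--H\"older constituents of $\pi$ as a $G_n$-module, so the argument never touches the issue you struggle with.

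The gap in your proposal is the ``Bessel-to-Whittaker reduction.'' You want a non-zero element of $\Hom_{H_r\ltimes Y}(\pi_i\otimes\sigma,\psi_Y)$, for a non-generic irreducible constituent $\pi_i$, to produce a non-zero Whittaker functional on $\pi_i$ and hence to vanish. But the known implication between the two kinds of functionals runs the other way: Whittaker functionals give rise to Bessel functionals (this is essentially what the Rankin-Selberg integral does), and non-generic irreducible representations do admit non-zero Bessel models --- this is part of the content of the local Gan--Gross--Prasad conjecture, where the member of a Vogan packet supporting the distinguished Bessel period is selected by $\epsilon$-factors and need not be generic. Consequently, your d\'evissage over Jordan--H\"older constituents of $\pi$ cannot close: the multiplicity one theorem bounds each summand $\dim\Hom_{H_r\ltimes Y}(\pi_i\otimes\sigma,\psi_Y)$ by one, but the sum over constituents can exceed one, and you have no way to kill the non-generic terms. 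Separately, your preliminary reduction to irreducible $\rho_{\tau,s}$ also fails when $\tau$ itself is reducible, which the hypotheses permit (here $\tau$ is only required to be of Whittaker type, e.g. an induced representation of Langlands type): since parabolic induction is exact, $\rho_{\tau,s}$ is then reducible for every $s$, not just finitely many.
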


\begin{proof}
The proof is obtained by adopting the method of Soudry, which porves a similar result for the Rankin-Selberg integrals 
attached to ${\rm SO}_{2n+1}\x\GL_r$ (cf. \cite[Theorem 8.2]{Soudry1993}). The proof uses the $P_{n+1}$-theory 
developed by Gel'fand-Kazhdan (cf. \cite{GK1972}) and Bernstein-Zelevinsky (\cite{BZ1976}, \cite{BZ1977}), where 
$P_{n+1}$ is the mirobolic subgroup of $\GL_{n+1}(E)$. More precisely, let $P_{n,n}^1\subset P_{n,n}$ be the subgroup 
given by 
\[
P^1_{n,n}=\widehat{\GL}_n(E)\ltimes N_{n,n}.
\]
Then the exact sequence
\[
1\to N_{n,n}\to P^1_{n,n}\to P_{n+1}\to 1
\]
implies that the normalized Jacquet module $(\pi_{N_{n,n}}, \cV_{\pi,N_{n,n}})$ of $\pi$ with respect to $N_{n,n}$ can
be regarded as a representation of $P_{n+1}$. Now the assumption that $\pi$ is of Whittaker type and the proof of 
\cite[Proposition 8.2]{GPSR1987} give the following finite sequence of $P_{n+1}$-modules
\begin{equation}\label{E:P_n+1}
0\subset \cV_0\subset \cV_1\subset\cdots\subset\cV_M=\cV_{\pi,N_{n,n}}
\end{equation}
such that $\cV_{i-1}\backslash\cV_i$ is an irreducible $P_{n+1}$-module for $1\le i\le M$, 
$\cV_1\cong{\rm ind}_{Z_{n+1}}^{P_{n+1}}\psi_{Z_{n+1}}$ and $\cV_1\backslash\cV_M$ does not contain 
${\rm ind}_{Z_{n+1}}^{P_{n+1}}\psi_{Z_{n+1}}$ as a subquotient. Here "ind" stands for the compact induction.
Already at this stage, we can follow the proof of Soudry word for word; however, we shall provide more details in the 
followings.\\

First note that 
\[
{\rm Hom}_{H_r\ltimes Y}(\pi|_{H_r\ltimes Y}\ot\rho_{\tau,s},\psi_Y)
=
{\rm Hom}_{H_r}(\pi_{Y,\psi_Y}\ot\rho_{\tau,s},\mathbb{C})
\]
where $\pi_{Y,\psi_Y}$ is the twisted Jacquet module of $\pi$ with respect to $(Y,\psi_Y)$. Then by the Frobenius 
reciprocity,   
\[
{\rm Hom}_{H_r}(\pi_{Y,\psi_Y}\ot\rho_{\tau,s},\mathbb{C})
\cong
{\rm Hom}_{\GL_r}((\pi_{Y,\psi_Y})_{Y_r}\ot\tau_s,\mathbb{C}).
\]
Observe that the normalized Jacquet module $(\pi_{Y,\psi_Y})_{Y_r}$ of $\pi_{Y,\psi_Y}$ with respect to $Y_r$ is 
isomorphic to $\pi_{\cY,\psi_{\cY}}$, where $\cY:=Y\cdot Y_r$ is a subgroup of $P^1_{n,n}$ (again, the embedding 
in \S\ref{SSS:embed} is used) and we extend $\psi_Y$ trivially across $Y_r$ to obtain a character $\psi_{\cY}$ of $\cY$.  
Moreover, $N_{n,n}$ is normal in $\cY$ and $N_{n,n}\backslash\cY$ is isomorphic to the following subgroup of 
$P_{n+1}$
\[
\cR
=
\stt{
e(x,z,y)
=
\begin{pmatrix}
I_r&0&0\\
x&z&y\\
0&0&1
\end{pmatrix}
\mid
x\in{\rm Mat}_{(n-r)\x r}(E), z\in Z_{n-r}, y\in E^{n-r}
}
\]
and $\psi_\cY$ on $N_{n,n}\backslash\cY$ is mapped to $\psi_{\cR}(e(x,z,y))=\psi_{Z_{n-r}}(z)\psi(y_{n-r})$, where
${}^ty=(y_1,\ldots, y_{n-r})\in E^{n-r}$. The group $\GL_r(E)$ is embedded in $P_{n+1}$ via the standard way, i.e.
\[
a
\mapsto
\pMX{a}{}{}{I_{n-r+1}}\in P_{n+1}
\]
for $a\in\GL_r(E)$. It follows that $\pi_{\cY,\psi_\cY}\cong(\pi_{N_{n,n}})_{\cR,\psi_\cR}$ as $\GL_r(E)$-modules.
By applying Frobenius reciprocity once more, we find that
\begin{align}\label{E:hom isom}
\begin{split}
{\rm Hom}_{\GL_r}((\pi_{Y,\psi_Y})_{Y_r}\ot\tau_s,\mathbb{C})
&\cong
{\rm Hom}_{\GL_r}((\pi_{N_{n,n}})_{\cR,\psi_\cR}\ot\tau_s,\mathbb{C})\\
&\cong
{\rm Hom}_{P_{n+1}}\left(\pi_{N_{n,n}}\ot\left(\delta_{P_{n+1}}\ot{\rm ind}^{P_{n+1}}_{\GL_r(E)\ltimes\cR}
\tau_s\boxtimes\psi_\cR^{-1}\right), \mathbb{C}\right)\\
&\cong
{\rm Hom}_{P_{n+1}}\left(\pi_{N_{n,n}}\ot\left({\rm ind}^{P_{n+1}}_{\GL_r(E)\ltimes\cR}
\tau_{s+1}\boxtimes\psi_\cR^{-1}\right), \mathbb{C}\right)
\end{split}
\end{align}
where in the last line above, we use the fact that the map $f\mapsto\delta_{P_{n+1}}f$ defines an isomorphism 
\[
\delta_{P_{n+1}}\ot{\rm ind}^{P_{n+1}}_{\GL_r(E)\ltimes\cR}
(\tau_s\boxtimes\psi_\cR^{-1})
\cong
{\rm ind}_{\GL_r(E)\ltimes\cR}^{P_{n+1}}(\tau_{s+1}\boxtimes\psi_{\cR}^{-1})
\]
between $P_{n+1}$-modules.\\ 

To proceed, recall that irreducible representations of $P_{n+1}$ are of the form 
\[
{\rm ind}_{\cR_m}^{P_{n+1}}\sigma\boxtimes\psi_{Z_{n+1-m}}
\]
where $\cR_m\subset P_{n+1}$ is the subgroup given by
\[
\cR_m
=
\stt{
\pMX{a}{b}{}{z}\mid a\in\GL_m(E), b\in{\rm Mat}_{m\x(n+1-m)}(E), z\in Z_{n+1-m}
}
\]
for $0\le m\le n$, $\sigma$ is an irreducible representation of $\GL_r(E)$ and we extend the representation 
$\sigma\boxtimes\psi_{Z_{n+1-m}}$ of $\GL_m(E)\x Z_{n+1-m}$ to $\cR_m$ trivially across $b$. 
Then by combining \eqref{E:hom isom} with \eqref{E:P_n+1}, it suffices to show that 
\[
{\rm Hom}_{P_{n+1}}\left(\left({\rm ind}_{\cR_m}^{P_{n+1}}\sigma\boxtimes\psi_{Z_{n+1-m}}\right)
\ot
\left({\rm ind}_{\GL_r(E)\ltimes\cR}^{P_{n+1}}\tau_\xi\boxtimes\psi_{\cR}^{-1}\right),\mathbb{C}
\right)
=
0
\]
for all but finitely many values of $q^{-\xi}$, unless $m=0$, in which case we are dealing with
\[
{\rm Hom}_{P_{n+1}}\left(\left({\rm ind}_{Z_{n+1}}^{P_{n+1}}\psi_{Z_{n+1}}\right)
\ot
\left({\rm ind}_{\GL_r(E)\ltimes\cR}^{P_{n+1}}\tau_\xi\boxtimes\psi_{\cR}^{-1}\right),\mathbb{C}
\right)
\]
which we show to be one-dimensional. Now we are in exactly the same situation as in \cite[Pages 55-57]{Soudry1993}. 
This completes the proof.
\end{proof}

\end{document}